\newtheorem{theorem}{Theorem}[section]
\newtheorem{proposition}[theorem]{Proposition}
\newtheorem{lemma}[theorem]{Lemma}
\newtheorem{claim}[theorem]{Claim}
\newtheorem{corollary}[theorem]{Corollary}
\newtheorem{remark}[theorem]{Remark}
\newtheorem{notation}[theorem]{Notation}
\newtheorem{definition}[theorem]{Definition}
\newtheorem{conjecture}[theorem]{Conjecture}
\newtheorem{statement}[theorem]{Statement}
\newcommand{\beq}[1]{\begin{equation}\label{#1}}
\newcommand{\enq}[0]{\end{equation}}
\newcommand{\mn}[0]{\medskip\noindent}
\newcommand{\nin}[0]{\noindent}
\newcommand{\ra}[0]{\rightarrow}
\newcommand{\A}[0]{{\cal A}}
\newcommand{\B}[0]{{\cal B}}
\newcommand{\C}[0]{{\cal C}}
\newcommand{\T}[0]{{\cal T}}
\newcommand{\W}[0]{{\cal W}}
\newcommand{\1}[0]{{\textbf{1}}}
\newcommand{\R}[0]{{\mathbb{R}}}
\newcommand{\E}[0]{{\mathbb{E}}}
\newcommand{\Cov}[0]{{\mathrm{Cov}}}
\begin{document}

\title{On the Correlation of Increasing Families}

\author{
Gil Kalai\thanks{Einstein Institute of Mathematics, Hebrew University, Jerusalem, Israel.
{\tt kalai@math.huji.ac.il}. Research supported by ERC advanced grant 320924, BSF grant 2014290, 
and NSF grant DMS-1300120.}
, Nathan Keller\thanks{Department of Mathematics, Bar Ilan University, Ramat Gan, Israel.
{\tt nathan.keller27@gmail.com}. Research
supported by the Israel Science Foundation (grant no. 402/13), the Binational US-Israel Science Foundation
(grant no. 2014290), and by the Alon Fellowship.}
, and Elchanan Mossel\thanks{Department of Statistics, University of Pennsylvania,
3730 Walnut Street,
Philadelphia, PA 19104 and Departments of Statistics and Computer Science,
U.C. Berkeley, 367 Evans Hall, Berkeley CA 94720.
 {\tt mossel@wharton.upenn.edu}. Research supported by NSF grant CCF 1320105, DOD ONR grant N00014-14-1-0823, and grant 328025 from the Simons Foundation.}
}


\maketitle

\begin{abstract}

The classical correlation inequality of Harris asserts that any two monotone
increasing families on the discrete cube are nonnegatively correlated.
In 1996, Talagrand~\cite{Talagrand96} established a lower bound on the
correlation in terms of how much the two families depend simultaneously
on the same coordinates. Talagrand's method and results inspired
a number of important works in combinatorics and probability theory.

In this paper we present stronger correlation lower bounds that
hold when the increasing families satisfy natural regularity or symmetry conditions.
In addition, we present several new classes of examples for which
Talagrand's bound is tight.

A central tool in the paper is a simple lemma asserting that for monotone events
noise decreases correlation. This lemma gives also a very simple derivation of the classical FKG
inequality for product measures, and leads to a simplification of part of Talagrand's proof.

\end{abstract}


\section{Introduction}
\label{sec:Introduction}

%
%

\begin{definition}
Let $\Omega_n$ denote the discrete cube $\{0,1\}^n$, and identify elements of $\Omega_n$ with
subsets of $[n]=\{1,2,\ldots,n\}$ in the natural manner. A family $\A \subset \Omega_n$ is
\emph{increasing} if $(S \in \A) \wedge (S \subset T)$ implies $T \in \A$ (alternatively, if
the characteristic function $\textbf{1}_\A$ is non-decreasing with respect to the natural
partial order on $\Omega_n$).
\end{definition}
One of the best-known correlation inequalities is Harris' inequality~\cite{Harris}
which asserts that any two increasing families $\A,\B \subset \Omega_n$ are nonnegatively correlated,
i.e., satisfy
\[
\mathrm{Cov(\A,\B)} = \mu(\A \cap \B) - \mu(\A) \mu(\B) \geq 0,
\]
where $\mu$ is the uniform measure on $\Omega_n$. In 1996, Talagrand~\cite{Talagrand96}
presented a lower bound on the correlation, in terms of \emph{influences} of the variables
on $\A,\B$.
\begin{definition}
The \emph{influence} of the $k^{th}$ variable on $\A \subset \Omega_n$ is
\[
I_k(\A) = 2\mu(\{x \in \A | x \oplus e_k \not \in \A\}),
\]
where $x \oplus e_k$ is gotten from $x$ by replacing $x_k$
by $1-x_k$.
The \emph{total influence} of $\A$ is $I(\A) = \sum_{k=1}^n I_k(\A)$.
\end{definition}
\noindent We also write $\W_1(\A,\B)=\sum_{i=1}^nI_i(\A) I_i(\B)$.

\begin{theorem}[Talagrand]\label{Thm:Talagrand-Correlation}
Let $\A,\B \subset \Omega_n$ be increasing. Then
\begin{equation}\label{Eq:Talagrand}
\Cov(\A,\B) \geq c \sum_{i=1}^n \frac{I_i(\A) I_i(\B)}
{\log (e/\sum_{i=1}^n I_i(\A) I_i(\B))}
= c \varphi \left( \W_1(\A,\B) \right),
\end{equation}
where $\varphi(x) = x/\log(e/x)$ and $c$ is a universal constant.
\end{theorem}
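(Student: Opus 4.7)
The plan is to combine Fourier analysis on $\gO_n$ with the Bonami--Beckner hypercontractive inequality, using the paper's noise-decreases-correlation lemma as a regularizer. Write $f = \1_\A$ and $g = \1_\B$. By Parseval, $\Cov(\A,\B) = \sum_{S\neq\emptyset}\hat f(S)\hat g(S)$; since $\A$ is increasing, $\hat f(\{i\}) = I_i(\A)/2$, so the level-one contribution equals $\W_1(\A,\B)/4$. The content of the theorem is that the high-frequency levels $|S|\geq 2$ can cancel at most a $\log(e/\W_1)$-factor of this signal.

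For every $\rho\in[0,1]$ the noise lemma gives $\Cov(\A,\B) \geq \Cov(T_{\sqrt\rho}f, T_{\sqrt\rho}g) = \rho\cdot \W_1/4 + \sum_{|S|\geq 2}\rho^{|S|}\hat f(S)\hat g(S)$. I would bound the high-frequency remainder by Cauchy--Schwarz, rewriting it as $\langle T_{\sqrt\rho}f^{\geq 2}, T_{\sqrt\rho}g^{\geq 2}\rangle$, and invoke hypercontractivity in the form $\|T_{\sqrt\rho}h\|_2\leq\|h\|_{1+\rho}$ to obtain the upper bound $(\mu(\A)\mu(\B))^{1/(1+\rho)}$. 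Combining yields $\Cov(\A,\B) \geq \rho \W_1/4 - (\mu(\A)\mu(\B))^{1/(1+\rho)}$; choosing $\rho \sim 1/\log(e/\W_1)$ balances the two terms and produces the target $c\varphi(\W_1)$.

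The main technical obstacle is that the estimate above has an error depending on $\mu(\A)\mu(\B)$, while $\varphi(\W_1)$ is $\mu$-free. To obtain a $\mu$-free bound, one should apply hypercontractivity coordinate-wise to the discrete derivatives $D_i f$: starting from the integral identity $\Cov(f,g) = \int_0^1 \sum_i \langle T_\rho D_i f, D_i g\rangle\, d\rho$ (whose integrand is manifestly nonnegative for increasing $f, g$, recovering Harris), bound each cross-term $\langle T_\rho(D_i f - \E D_i f), D_i g - \E D_i g\rangle$ via Cauchy--Schwarz and the per-coordinate hypercontractive estimate $\|D_i f - \E D_i f\|_{1+\rho}\leq C\, I_i(\A)^{1/(1+\rho)}$, sum over $i$, restrict the integration to a short interval $[0,t]$ on which the positive contribution dominates, and optimize $t \sim 1/\log(e/\W_1)$. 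Concavity of $\varphi$ then aggregates the resulting per-coordinate expressions $\varphi(I_i(\A)I_i(\B))$ into the global form $\varphi(\W_1)$, completing the proof.
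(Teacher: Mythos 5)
Your plan correctly recycles the paper's noise-monotonicity lemma (your integral identity $\Cov(f,g)=\int_0^1 h'(\rho)\,d\rho$ with $h(\rho)=\langle T_\rho f,g\rangle$ is just the integrated form of Lemma~\ref{Lemma:Main}), and you correctly identify that the global hypercontractive estimate with error $(\mu(\A)\mu(\B))^{1/(1+\rho)}$ is useless because $\mu(\A)\mu(\B)$ is generically much larger than $\W_1$. The trouble is that your proposed fix does not actually repair the argument.

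The first genuine gap is in the per-coordinate hypercontractive step. The estimate $\|D_if-\E D_if\|_{1+\rho}\lesssim I_i(\A)^{1/(1+\rho)}$ gives a cross-term bounded by $(I_i(\A)I_i(\B))^{1/(1+\rho)}$. Writing $a_i=I_i(\A)I_i(\B)$, the ratio of this error to the main term $a_i$ is $a_i^{-\rho/(1+\rho)}$, which is $O(1)$ only when $\rho\lesssim 1/\log(1/a_i)$. Since individual influences can be of order $1/\sqrt n$, some $a_i$ can be as small as $\Theta(1/n)$ even when $\W_1=\Theta(1)$, and the parameter $\rho$ is then forced down to $\Theta(1/\log n)$ rather than the desired $\Theta(1/\log(e/\W_1))$. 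Made rigorous, this route yields the per-coordinate bound $\Cov(\A,\B)\gtrsim\sum_i\psi(I_i(\A))\psi(I_i(\B))$, which is exactly Theorem~\ref{Thm:Alt_Bound} of the paper — indeed the paper remarks after the proof of Theorem~\ref{Thm:Imp_Bound_Asymmetric} that this is how Theorem~\ref{Thm:Alt_Bound} can be reproved. But Theorem~\ref{Thm:Alt_Bound} is incomparable to Talagrand's bound, and is strictly weaker by a $\log n / \log(1/\W_1)$ factor in the basic example where $\A$ and $\B$ are small Hamming balls. So the per-coordinate approach genuinely cannot reach Theorem~\ref{Thm:Talagrand-Correlation}.

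The second gap is the final aggregation step, which you cannot fix: $\varphi(x)=x/\log(e/x)$ is \emph{convex}, not concave (this is exactly the content of Lemma~\ref{Lemma:T-Simple}), and since $\varphi(0)=0$ convexity makes $\varphi$ superadditive, so $\sum_i\varphi(a_i)\le\varphi(\sum_i a_i)=\varphi(\W_1)$. The inequality points the wrong way: a per-coordinate lower bound $\Cov\ge c\sum_i\varphi(a_i)$ does not imply $\Cov\ge c\,\varphi(\W_1)$.

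The paper's proof avoids both pitfalls by keeping the hard part of Talagrand's original argument intact: the \emph{global} level-$d$ estimate $\W_d(f,g)\le C(d)\,\W_1(f,g)\bigl(\log(d/\W_1(f,g))\bigr)^{d-1}$ (Lemmas~\ref{Lemma:Talagrand} and~\ref{Lemma:Talagrand-d}). That lemma is precisely what lets one take $\rho\sim 1/\log(e/\W_1)$ rather than $\rho\sim 1/\log n$, and it is not obtainable by applying hypercontractivity coordinate-by-coordinate. The noise lemma only replaces Talagrand's inductive second step; it does not replace the core hypercontractive estimate, and your proposal has no substitute for it.
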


Talagrand's theorem and the central lemma used in its proof (Lemma~\ref{Lemma:Talagrand}
below) were used in several subsequent works in combinatorics and probability theory
(e.g.,~\cite{ABGM14,FKKK15,KK13,Talagrand97}), most notably in the BKS noise sensitivity
theorem~\cite{BKS}.

So far, only two classes of tightness examples for Talagrand's lower bound are known.
Talagrand~\cite{Talagrand96} showed that his lower bound is tight when $\A,\B$ are
increasing Hamming balls, i.e., have the form $\{x:\sum x_i >t\}$, where the
thresholds $t_\A$ and $t_\B$ are chosen such that $\mu(\A)=\epsilon$ and $\mu(\B)=1-\epsilon$.
In~\cite{Keller09}, the second author presented another example, based on Ben-Or and Linial's
{\it tribes} function~\cite{BL90}, defined as follows. Partition $[n]$ into $n/r$ disjoint
sets $T_1,\ldots,T_{n/r}$ of $r$ elements, where $r \approx \log n - \log \log n$, and define $\A$ by setting $x \in \A$ iff there
exists $j$ such that $x_i=1$ for all $i \in T_j$. Let $\B$ be the \emph{dual} family
of $\A$, i.e., $x = (x_1,\ldots,x_n) \in \B$ iff $\bar{x} = (1-x_1,\ldots,1-x_n) \not \in \A$.
Then~\eqref{Eq:Talagrand} is tight for $\A,\B$.

While the two examples seem dissimilar, they share a central common feature: in both examples,
$I_i(\A) = I_i(\B)$ for all $i \in [n]$. (Moreover, in both cases $\B$ is the dual of $\A$.)
Thus, a first motivation of the current paper is seeking to find other tightness examples, especially
examples in which the relation between the structures of $\A$ and $\B$ is not so strong.

\bigskip

A second motivation is an alternative correlation lower bound, proved recently by Keller, Mossel, and
Sen~\cite{KMS14}.
\begin{theorem}\label{Thm:Alt_Bound}
Let $\A,\B \subset \Omega_n$ be increasing. Then
\begin{equation}\label{Eq:Alt_Bound}
\Cov(\A,\B) \geq c \sum_{i=1}^n \frac{I_i(\A)}{\sqrt{\log \frac{e}{I_i(\A)}}}
\frac{I_i(\B)}{\sqrt{\log \frac{e}{I_i(\B)}}} =  c \sum_{i=1}^n \psi(I_i(A)) \psi(I_i(B)),
\end{equation}
where $\psi(x) = x/\sqrt{\log(e/x)}$ and $c$ is a universal constant.
\end{theorem}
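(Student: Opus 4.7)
The plan is to adapt Talagrand's hypercontractive argument behind Theorem~\ref{Thm:Talagrand-Correlation}, but to apply it separately for each coordinate rather than once globally. Starting from the Fourier expansion
\[
\Cov(\A,\B) = \sum_{S \neq \emptyset} \hat f(S) \hat g(S), \qquad f = \1_\A, \quad g = \1_\B,
\]
the level-one contribution $\sum_i \hat f(\{i\}) \hat g(\{i\})$ equals $\W_1(\A,\B)/4$ and is nonneg for monotone $\A,\B$, while higher-level Fourier coefficients can have either sign and may partially cancel it. In Talagrand's proof the cancellation is controlled by applying the $(2,1+\rho)$-hypercontractive inequality once, with a single parameter $\rho$ optimised for the aggregate $\W_1(\A,\B)$; this produces the single logarithm $\log(e/\W_1)$ in~\eqref{Eq:Talagrand}. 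Replacing this global logarithm by the per-coordinate logarithms $\log(e/I_i(\A))$ and $\log(e/I_i(\B))$ in~\eqref{Eq:Alt_Bound} should come from localising the hypercontractive step.

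Concretely, I would work from the same Fourier/noise identity that underlies Lemma~\ref{Lemma:Talagrand} (or, equivalently, from the martingale decomposition $\Cov(f,g) = \sum_i \E[\Delta_i f \cdot \Delta_i g]$, each summand of which is automatically nonneg for monotone $f,g$ and gives the FKG inequality), and bound each coordinate's contribution by applying hypercontractivity to the discrete derivatives $D_i f, D_i g$. For monotone Boolean $f$ the derivative $D_i f$ is the indicator of the event that $i$ is pivotal for $f$, so $\|D_i f\|_p^p = I_i(f)$ for every $p$; the Bonami-Beckner inequality then yields, for any $\rho_i \in (0,1)$,
\[
\|T_{\sqrt{\rho_i}} D_i f\|_2^2 \leq I_i(f)^{2/(1+\rho_i)}, \qquad \|T_{\sqrt{\rho_i}} D_i g\|_2^2 \leq I_i(g)^{2/(1+\rho_i)}.
\]
The crucial feature is that these estimates involve only the single influences $I_i(\A)$ and $I_i(\B)$, not the aggregate $\W_1$. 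Optimising $\rho_i$ separately for each coordinate, with the natural choice $\rho_i \asymp 1/\sqrt{\log(e/I_i(\A))\log(e/I_i(\B))}$, balances the linear gain against the hypercontractive loss coordinate-by-coordinate and yields a per-coordinate contribution of order $\psi(I_i(\A))\psi(I_i(\B))$. Summing over $i$ gives~\eqref{Eq:Alt_Bound}.

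The main obstacle, which is precisely the novelty of Theorem~\ref{Thm:Alt_Bound} over Theorem~\ref{Thm:Talagrand-Correlation}, is to set up the per-coordinate decomposition so that allowing a different $\rho_i$ at each coordinate does not introduce uncontrolled errors. A Fourier coefficient $\hat f(S)\hat g(S)$ with $|S|\geq 2$ is felt by every coordinate $i \in S$ and must be distributed consistently across these coordinates (for instance with weight $1/|S|$) so that neither double-counting nor cross-cancellation between adjacent coordinates' bookkeeping occurs. I expect this to require a telescoping or iterated-conditioning argument that peels off one coordinate at a time with its own $\rho_i$, in place of the single global Bonami-Beckner step used in Talagrand's proof; getting this bookkeeping right is where the bulk of the technical effort will lie.
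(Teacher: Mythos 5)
Your outline points at the right tools but leaves a genuine gap, and one of its stated identities is false. The paper's own route to this theorem (indicated in the remark at the end of Section~\ref{sec:asymmetric}) is Talagrand's induction on $n$, run with $\psi$ in place of both $\psi_n$ and $\psi$: one peels off the coordinate $n$ of largest influence, applies the inductive hypothesis to the two restrictions $f^0,g^0$ and $f^1,g^1$, and controls the resulting error via (i) the convexity/Lipschitz bound for $\psi$ (Lemma~\ref{Lemma:T-Simple3}), which passes from $\psi(a_i)\psi(b_i)$ to $\tfrac12(\psi(a_i^0)\psi(b_i^0)+\psi(a_i^1)\psi(b_i^1))$ at a cost of $|\psi(a_i^0)-\psi(a_i^1)|\,|\psi(b_i^0)-\psi(b_i^1)|$, and (ii) Corollary~\ref{Cor:T-Classic} (Chang's lemma, i.e.\ hypercontractivity of $\Delta_n f$ and $\Delta_n g$) together with Cauchy--Schwarz to show that the accumulated error is only $O(a_n b_n)$. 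The precise convexity of $\psi$ is what lets the induction close; the ``per-coordinate $\rho_i$'' idea you gesture at is, in this packaging, just the choice of hypercontractivity exponent in Chang's lemma at the single peeled coordinate, not a simultaneous asymmetric noise operator.

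Two concrete problems with your write-up. First, the identity you call the martingale decomposition, $\Cov(f,g)=\sum_i \E[\Delta_i f\cdot\Delta_i g]$, is wrong for the paper's $\Delta_i$: that sum equals $\sum_S |S|\,\hat f(S)\hat g(S)$, i.e.\ the derivative $h'(1)$ from Lemma~\ref{Lemma:Main}, not $h(1)-h(0)=\Cov(f,g)$. The correct martingale decomposition uses increments $d_i f=\E[f\mid x_1,\dots,x_i]-\E[f\mid x_1,\dots,x_{i-1}]$, whose Fourier support is $\{S:\max S=i\}$, and the resulting $\E[(d_if)(d_ig)]$ cannot be bounded below by $c\,\psi(I_i(f))\psi(I_i(g))$ coordinate-by-coordinate by hypercontractivity alone, since $\sum_{|S|\ge 2,\max S=i}\hat f(S)^2$ has no useful bound in terms of $I_i(f)$ unless one has already suppressed the higher levels. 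Second, and more fundamentally, the step you defer --- ``distributing'' each $\hat f(S)\hat g(S)$ with $|S|\ge 2$ over the coordinates in $S$ so that per-coordinate $\rho_i$'s can be optimised independently --- is precisely the whole content of the proof; with the asymmetric noise operator the damping on $\hat f(S)\hat g(S)$ is $\prod_{i\in S}\rho_i$ and I see no way to control this via per-coordinate Chang-type bounds without running the inductive peeling. Until you supply the analogue of Lemma~\ref{Lemma:T-Simple3} and the inductive bookkeeping that uses it, this is a plausible plan, not a proof.
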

It turns out that neither of the lower bounds is strictly stronger than the other. While
Talagrand's bound is better for $\A$ being a small Hamming ball and $\B$ being its dual,
there are cases of interest for which~\eqref{Eq:Alt_Bound} is better. E.g., for $\A$ being
a small Hamming ball and $\B$ being the ``majority'' (i.e., $\{x:\sum x_i > n/2\}$),
~\eqref{Eq:Alt_Bound} is always stronger, and may be stronger even by a
multiplicative factor of $\sqrt{n/\log n}$. Hence, it is tempting to find an improved lower
bound that will combine the advantages of~\eqref{Eq:Talagrand} and~\eqref{Eq:Alt_Bound}.

\bigskip

Our main result is such a ``combined'' lower bound that holds under a weak 
regularity assumption on the families.
\begin{definition}

A family $\A \subset \Omega_n$ is
\emph{regular}
if all its influences are equal.

\end{definition}

Note that in all examples mentioned so far (and actually, in most examples in the field,
except for ``dictatorships'' and ``juntas''), both $\A$ and $\B$ are regular. In fact, in those cases the
families are weakly symmetric, namely invariant under a transitive group of permutations on the variables.

\begin{definition}
Two increasing families $\A$ and $\B$ are {\em similar} if all ratios $I_i(\A)/I_i(\B)$ are equal,
and weakly similar if for some $c'>0$, $\max \{I_i(\A)/I_i(\B)\}
\le c'\min \{I_i(\A)/I_i(\B) \}$. Of course, all regular families are mutually similar.
\end{definition}

\begin{theorem}\label{Thm:Imp_Bound_Symmetric}

Let $\A,\B \subset \Omega_n$ be increasing and similar. Then
\begin{equation}\label{Eq:Imp_Bound1}
\Cov(\A,\B) \geq c \frac{\W_1 (\A,\B)}{\sqrt{\log \frac{e}{W_1(\A,\A)}}\sqrt{\log \frac{e}{W_1(\B,\B)}}},
\end{equation}
where $c>0$ is a universal constant.

\medskip

\noindent In particular, if $\A,\B \subset \Omega_n$ are regular, then
\begin{equation}\label{Eq:Imp_Bound1-r}
\Cov(\A,\B) \geq c \sum_{i=1}^n \frac{I_i(\A)}{\sqrt{\log \frac{e}{n I_i(\A)^2}}}
\frac{I_i(\B)}{\sqrt{\log \frac{e}{n I_i(\B)^2}}} = c \frac{I(\A) I(\B)}{n \sqrt{\log \frac{e n}{I(\A)^2}}
\sqrt{\log \frac{e n}{I(\B)^2}}}.
\end{equation}
\end{theorem}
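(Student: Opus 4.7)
The plan is to apply the paper's ``noise decreases correlation'' lemma, isolate the level-$1$ contribution of $\Cov(T_\rho\A,T_\sigma\B)$, bound the higher-level error using Bonami--Beckner hypercontractivity, and choose $\rho^2\asymp 1/\log(e/\W_1(\A,\A))$ and $\sigma^2\asymp 1/\log(e/\W_1(\B,\B))$ so that the main term matches the target. The similarity hypothesis enters through the equality case of Cauchy--Schwarz, namely $\sqrt{\W_1(\A,\A)\W_1(\B,\B)}=\W_1(\A,\B)$, which holds precisely when all ratios $I_i(\A)/I_i(\B)$ coincide; this is exactly what lets the symmetric error bound be absorbed into the asymmetric main term $\W_1(\A,\B)$.

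Concretely, the noise lemma yields $\Cov(\A,\B)\ge\Cov(T_\rho\A,T_\sigma\B)=\sum_{|S|\ge 1}(\rho\sigma)^{|S|}\hat{1}_\A(S)\hat{1}_\B(S)$. Since $\hat{1}_\A(\{i\})=I_i(\A)/2$ for increasing $\A$, the level-$1$ contribution equals $(\rho\sigma/4)\W_1(\A,\B)\ge 0$, and Cauchy--Schwarz bounds the remainder $R$ by $E_\A(\rho)^{1/2}E_\B(\sigma)^{1/2}$ with $E_\A(\rho):=\sum_{|S|\ge 2}\rho^{2|S|}\hat{1}_\A(S)^2$. For increasing $\A$ the discrete derivative $\partial_i 1_\A$ is $\{0,\tfrac12\}$-valued with $\Pr[\partial_i 1_\A=\tfrac12]=I_i(\A)$, and $\widehat{\partial_i 1_\A}(T)=\hat{1}_\A(T\cup\{i\})$ for $i\notin T$. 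Bonami--Beckner then gives $\|T_\rho\partial_i 1_\A\|_2^2\le\tfrac14 I_i(\A)^{2/(1+\rho^2)}$, and summing over $i$ (each $S$ with $|S|\ge 2$ is counted at least twice) yields $E_\A(\rho)\le(\rho^2/8)\sum_i I_i(\A)^{2/(1+\rho^2)}$.

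To convert this into a $\W_1(\A,\A)$-based bound, I apply H\"older's inequality with exponent $1+\rho^2$: $\sum_i I_i(\A)^{2/(1+\rho^2)}\le n^{\rho^2/(1+\rho^2)}\W_1(\A,\A)^{1/(1+\rho^2)}$, which is $\le C\W_1(\A,\A)$ provided $\rho^2\log(n/\W_1(\A,\A))$ is bounded. With $\rho^2=c_0/\log(e/\W_1(\A,\A))$ for a small constant $c_0$, this holds in the regime $\W_1(\A,\A)\lesssim 1/n$ (outside of this regime the bound \eqref{Eq:Imp_Bound1} is essentially implied by Talagrand's bound~\eqref{Eq:Talagrand}). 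Therefore $E_\A(\rho)\le C'\rho^2\W_1(\A,\A)$ and, analogously, $E_\B(\sigma)\le C'\sigma^2\W_1(\B,\B)$, so $|R|\le C'\rho\sigma\sqrt{\W_1(\A,\A)\W_1(\B,\B)}$.

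Applying the similarity identity then gives $|R|\le C'\rho\sigma\W_1(\A,\B)$; taking $c_0$ small enough so that $C'\le\tfrac18$ forces the main term to dominate, yielding $\Cov(\A,\B)\ge(\rho\sigma/8)\W_1(\A,\B)$, which is~\eqref{Eq:Imp_Bound1}. The regular case~\eqref{Eq:Imp_Bound1-r} then follows by plugging $\W_1(\A,\A)=I(\A)^2/n$ (and analogously for $\B$) into~\eqref{Eq:Imp_Bound1}. The main technical obstacle is the hypercontractive/H\"older step: the raw H\"older estimate produces $\log(en/\W_1(\A,\A))$ in the denominator rather than the stated $\log(e/\W_1(\A,\A))$, so cleanly obtaining the latter requires either case analysis by size of $\W_1$ or a refinement using the trivial bound $\|T_\rho\partial_i 1_\A\|_2^2\le I_i(\A)/4$ on coordinates with very small $I_i(\A)$. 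The similarity assumption is genuinely used in the last step: without it, one cannot trade $\sqrt{\W_1(\A,\A)\W_1(\B,\B)}$ for $\W_1(\A,\B)$ in the error.
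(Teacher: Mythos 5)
Your high-level strategy matches the paper's: apply the noise lemma (Lemma~\ref{Lemma:Main}) to reduce to $\Cov(T_\rho 1_\A, T_\sigma 1_\B)$, split off the level-$1$ term, bound the level-$\geq 2$ remainder via Cauchy--Schwarz by $\sqrt{E_\A(\rho) E_\B(\sigma)}$, and use similarity to convert $\sqrt{\W_1(\A)\W_1(\B)}$ into $\W_1(\A,\B)$. Where you diverge — and where the gap is — is in how you bound the error terms $E_\A(\rho)$.

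The paper bounds the level-$d$ contributions via Lemma~\ref{Lemma:Talagrand-d} (a generalization of Talagrand's Lemma~\ref{Lemma:Talagrand}), which gives $\W_d(f) \le C(d)\W_1(f)\bigl(\log(d/\W_1(f))\bigr)^{d-1}$. Crucially, the logarithm there is $\log(1/\W_1(f))$ with \emph{no dependence on $n$}. This is the hard technical content of Talagrand's argument, and the paper explicitly keeps Lemma~\ref{Lemma:Talagrand} unchanged. Your proposal tries to replace it with a one-shot Bonami--Beckner + H\"older estimate, and as you yourself note, that yields $\sum_i I_i(\A)^{2/(1+\rho^2)} \le n^{\rho^2/(1+\rho^2)}\W_1(\A)^{1/(1+\rho^2)}$, i.e.\ a bound controlled by $\log(en/\W_1(\A))$ rather than $\log(e/\W_1(\A))$. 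These two quantities differ by a factor $\Theta(\log n/\log(1/\W_1(\A)))$, which is unbounded whenever $\W_1(\A) = n^{-o(1)}$ (e.g.\ $\W_1(\A) \asymp 1/\log n$, which is what happens for majority). So the H\"older step only yields the stated exponent when $\W_1(\A)$ is polynomially small in $n$.

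Your two proposed patches do not close this. The claim that ``outside of this regime the bound is essentially implied by Talagrand's bound~\eqref{Eq:Talagrand}'' is not correct: Claim~\ref{Claim:Comparison-new}(a) shows the target~\eqref{Eq:Imp_Bound1} is always \emph{at least} as strong as Talagrand, and in the asymmetric case (say $\W_1(\A)\asymp 1/\log n$, $\W_1(\B)$ exponentially small) the geometric-mean denominator in~\eqref{Eq:Imp_Bound1} beats the arithmetic-mean denominator in~\eqref{Eq:Talagrand} by a factor that grows with $n$, precisely in a regime where your H\"older condition $\rho^2\log(n/\W_1(\A)) = O(1)$ also fails. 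The ``trivial bound on small-influence coordinates'' suggestion is vague; if you instead use $\sum_{S\ni i,\,|S|\ge 2}\rho^{2|S|}\hat 1_\A(S)^2 \le \rho^4\|\Delta_i 1_\A\|_2^2 = \Theta(\rho^4 I_i(\A))$, summing over the small-influence coordinates contributes $\Theta(\rho^4 \sum I_i)$, and $\sum I_i$ can be as large as $\Theta(\sqrt n)$, far exceeding the required $\epsilon\rho^2\W_1(\A)$ in the same regime. In short, the missing ingredient is exactly the content of Lemma~\ref{Lemma:Talagrand}/Lemma~\ref{Lemma:Talagrand-d}: obtaining an $n$-free $\log(1/\W_1)$ decay of the higher levels. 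That lemma requires a genuinely more careful argument than direct hypercontractivity plus H\"older, and the paper's proof invokes it (applied separately to $f$ and to $g$ with noise rates $\rho_f \asymp 1/\log(e/\W_1(f))$, $\rho_g \asymp 1/\log(e/\W_1(g))$, then set $\rho = \sqrt{\rho_f\rho_g}$). If you substitute Lemma~\ref{Lemma:Talagrand-d} for your H\"older step, your remaining steps go through and you recover the paper's proof.
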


When we let $c$ depend on $c'$, the
theorem extends to weakly similar increasing families $\A$ and $\B$.
It is easy to show (see Claim~\ref{Claim:Comparison-new}) that~\eqref{Eq:Imp_Bound1} is always at least as strong as both~\eqref{Eq:Talagrand}
and~\eqref{Eq:Alt_Bound}. Moreover, in some cases of interest it is strictly stronger.
For example,
in the case of $\A$ being a Hamming ball with $\mu(\A)= O(1/n)$ and $\B$ being the ``majority'',
~\eqref{Eq:Imp_Bound1} is tight, while~\eqref{Eq:Alt_Bound} is off by a factor of $\sqrt{\log n}$, and
~\eqref{Eq:Talagrand} is off by $\sqrt{\log(1/\mu(\A))}$, that may be as large as $\sqrt{n}$.

\medskip

We achieve Theorem~\ref{Thm:Imp_Bound_Symmetric} by proposing a somewhat simpler proof of
Theorem~\ref{Thm:Talagrand-Correlation} that allows to handle better
similarity and regularity  assumptions on the families.
The new proof uses a simple lemma regarding a property of the
classical \emph{noise operator}.
\begin{definition}
Let $0 \leq \rho \leq 1$. The noise operator $T_\rho: \mathbb{R}^{\Omega_n} \ra \mathbb{R}^{\Omega_n}$ is defined by
\[
T_{\rho} f(x) = \mathbb{E} [f(N_\rho x)],
\]
where $N_\rho(x)$ is obtained from $x$ by leaving each coordinate of $x$ unchanged with probability
$\rho$ and replacing it by a random value with probability $1-\rho$.
\end{definition}

\begin{lemma}\label{Lemma:Main}
Let $f,g: \Omega_n \ra \R$ be increasing. Then the function $\rho \mapsto \langle T_\rho f, g \rangle$ (where
$\langle \cdot,\cdot \rangle$ is the usual inner product on $(\Omega_n,\mu)$) is non-decreasing.
\end{lemma}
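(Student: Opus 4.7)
\medskip\noindent
\textbf{Proof plan.} The strategy is to show that $\rho \mapsto \langle T_\rho f, g \rangle$ is non-decreasing by differentiating in $\rho$ and checking that the derivative is non-negative. The main tool is the tensor factorization $T_\rho = T^{(1)}_\rho \otimes \cdots \otimes T^{(n)}_\rho$, where each single-coordinate operator has the explicit form $T^{(i)}_\rho = \rho \cdot \mathrm{id} + (1-\rho) E_i$, with $E_i$ the averaging operator over the $i$-th variable. To decouple the $n$ factors, I introduce independent parameters $\rho_1, \ldots, \rho_n$ and set
\[
\Phi(\rho_1, \ldots, \rho_n) := \langle T^{(1)}_{\rho_1} \cdots T^{(n)}_{\rho_n} f, g \rangle,
\]
so that $\langle T_\rho f, g \rangle = \Phi(\rho, \ldots, \rho)$. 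It then suffices to prove $\partial_{\rho_i} \Phi \geq 0$ for every $i$, after which the claim follows by moving along the diagonal one coordinate at a time.

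First I would observe that every factor $T^{(j)}_{\rho_j}$ preserves monotonicity, which is immediate from its expression as a convex combination of the identity and a coordinate averaging. Consequently, $h := \prod_{j \neq i} T^{(j)}_{\rho_j} f$ is increasing. Since $\frac{d}{d\rho_i} T^{(i)}_{\rho_i} = \mathrm{id} - E_i$, the product rule gives $\partial_{\rho_i} \Phi = \langle (\mathrm{id} - E_i) h, g \rangle$. I would then condition on the coordinates $X_{-i}$: the remaining expectation over $X_i$ is precisely the one-variable covariance of $h$ and $g$, viewed as functions of $X_i$ alone with $X_{-i}$ fixed. Using the elementary identity $\Cov_{X_i}(u, v) = \tfrac{1}{4}(u(1) - u(0))(v(1) - v(0))$ for functions on $\{0,1\}$, this yields
\[
\partial_{\rho_i} \Phi \;=\; \tfrac{1}{4} \, \E_{X_{-i}} \!\left[\,\bigl(h(X_{-i}, 1) - h(X_{-i}, 0)\bigr)\bigl(g(X_{-i}, 1) - g(X_{-i}, 0)\bigr)\right].
\]
Both differences inside the expectation are non-negative by the monotonicity of $h$ and $g$, so $\partial_{\rho_i} \Phi \geq 0$, which completes the proof.

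There is no substantial obstacle; the only step requiring a small verification is that $T^{(j)}_{\rho_j}$ preserves monotonicity, which reduces to the one-dimensional identity $(T^{(j)}_{\rho_j} u)(1) - (T^{(j)}_{\rho_j} u)(0) = \rho_j \, (u(1) - u(0))$ applied pointwise in the remaining variables. It is worth emphasising that the argument never uses Harris' inequality, which is what makes the lemma suitable as an ingredient in a new proof of Harris.
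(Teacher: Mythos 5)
Your proof is correct. It is a close cousin of the paper's second proof, which likewise introduces the asymmetric operator $T_{\rho_1,\ldots,\rho_n}$ and differentiates with respect to individual coordinates; your conditional-covariance identity
\[
\partial_{\rho_i}\Phi \;=\; \tfrac14\,\E_{X_{-i}}\!\bigl[\bigl(h(X_{-i},1)-h(X_{-i},0)\bigr)\bigl(g(X_{-i},1)-g(X_{-i},0)\bigr)\bigr]
\]
is exactly $\langle \Delta_i h, \Delta_i g\rangle$ in the paper's notation. The one genuine difference is where the derivative is taken. The paper evaluates $h'$ only at $\rho=1$ and then invokes the semigroup property $T_\rho = T_{\rho_2}T_{\rho_1}$ together with the fact that $T_{\rho'}f$ remains increasing to propagate the sign of the derivative to all $\rho\in[0,1]$. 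You instead observe that each one-coordinate factor $T^{(j)}_{\rho_j}$ preserves monotonicity, so $h=\prod_{j\ne i}T^{(j)}_{\rho_j}f$ is increasing at \emph{any} point $(\rho_1,\ldots,\rho_n)\in[0,1]^n$, which lets you establish $\partial_{\rho_i}\Phi\ge 0$ directly everywhere and dispense with the semigroup reduction. This is a small but real simplification: it avoids the composition step and makes the proof entirely self-contained, needing only the one-coordinate identity $(T^{(j)}_{\rho}u)(1)-(T^{(j)}_{\rho}u)(0)=\rho\,(u(1)-u(0))$, at the modest cost of arguing about the product $\prod_{j\ne i}T^{(j)}_{\rho_j}$ rather than $T_\rho$ itself. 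It does not use Fourier analysis at all, unlike the paper's first proof.
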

Lemma~\ref{Lemma:Main} is of independent
interest. For example, it yields an instant proof of the FKG correlation inequality~\cite{FKG} for
product measures. Indeed, as $\langle T_0 f, g \rangle = \E[f]\E[g]$ and
$\langle T_1 f, g \rangle = \mathbb{E}[fg]$, we immediately obtain
\[
\mathrm{Cov}(f,g) := \E[fg] - \E[f]\E[g] \geq 0.
\]
A consequence of Theorem~\ref{Thm:Imp_Bound_Symmetric} is:
\begin{corollary}
If $\A$ is increasing, regular and balanced and $\B$ is the majority function, then
\begin{equation}\label{Eq:cormaj}
\Cov(\A,\B) \geq c \sqrt {\log n}/\sqrt {n},
\end{equation}
where $c>0$ is a universal constant.
\end{corollary}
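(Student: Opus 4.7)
The plan is to apply the regular case~\eqref{Eq:Imp_Bound1-r} of Theorem~\ref{Thm:Imp_Bound_Symmetric} to the pair $(\A,\B)$. First I would verify the hypotheses: $\A$ is regular by assumption, and the majority function $\B$ is invariant under every permutation of the coordinates, so in particular all its influences coincide. Hence $(\A,\B)$ is a pair of regular (and therefore similar) increasing families, and~\eqref{Eq:Imp_Bound1-r} applies.

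The next step is to estimate the two total influences. For the majority, a direct computation with binomial coefficients (via Stirling) gives $I_i(\B) = \Theta(1/\sqrt{n})$, hence $I(\B) = \Theta(\sqrt{n})$; in particular $\log(en/I(\B)^2) = O(1)$. For $\A$, since it is balanced, the Kahn--Kalai--Linial inequality guarantees the existence of some coordinate with influence at least $c \log n/n$; by regularity \emph{every} coordinate of $\A$ has influence at least $c \log n/n$, so summing yields $I(\A) \geq c \log n$, and $\log(en/I(\A)^2) = O(\log n)$.

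Plugging these estimates into~\eqref{Eq:Imp_Bound1-r} gives
\[
\Cov(\A,\B) \;\geq\; c \cdot \frac{I(\A)\,I(\B)}{n\,\sqrt{\log(en/I(\A)^2)}\,\sqrt{\log(en/I(\B)^2)}} \;\geq\; c' \cdot \frac{(\log n)\,\sqrt{n}}{n\,\sqrt{\log n}} \;=\; c' \cdot \frac{\sqrt{\log n}}{\sqrt{n}},
\]
which is precisely~\eqref{Eq:cormaj}.

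The proof is essentially a substitution, so the only genuinely non-trivial ingredient is the use of KKL. Without regularity one would only be entitled to the trivial lower bound $I(\A) \geq 1$ (attained by a dictator), and substitution would yield the weaker estimate $\Cov(\A,\B) \gtrsim 1/\sqrt{n\log n}$, off by a factor of $\log n$. Thus the corollary genuinely exploits \emph{both} the strength of Theorem~\ref{Thm:Imp_Bound_Symmetric} under a regularity hypothesis \emph{and} the fact that KKL upgrades regularity into an $\Omega(\log n)$ lower bound on $I(\A)$.
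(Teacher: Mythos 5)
Your proof is correct and matches the approach the paper implicitly intends (the paper states the corollary without proof, saying only that it is ``a consequence of Theorem~\ref{Thm:Imp_Bound_Symmetric}''). You have correctly filled in the missing details: applying \eqref{Eq:Imp_Bound1-r} with $I_i(\B)=\Theta(1/\sqrt n)$ so that $\log(en/I(\B)^2)=O(1)$, and crucially invoking KKL together with the regularity of $\A$ to upgrade ``some coordinate has influence $\Omega(\log n/n)$'' to ``every coordinate has influence $\Omega(\log n/n)$,'' hence $I(\A)=\Omega(\log n)$ and $\log(en/I(\A)^2)=O(\log n)$; the substitution then yields $\Omega(\sqrt{\log n}/\sqrt n)$ as claimed. (One minor point in your closing remark: without regularity the inequality \eqref{Eq:Imp_Bound1-r} itself does not apply, so the comparison to the ``$I(\A)\ge 1$'' scenario is a hypothetical rather than a literal fallback, but this does not affect the correctness of the proof.)
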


We note that both Theorems~\ref{Thm:Talagrand-Correlation} and~\ref{Thm:Alt_Bound} give
a weaker lower bound of $c/\sqrt n$. On the other hand, we can show that when $\A$ is the tribes family,
then $\mathrm{Cov}(\A,\mathrm{MAJ}) = \Theta(\log n /\sqrt{n})$, and we conjecture that this lower bound
holds in general. Furthermore, we conjecture that the following holds:
\begin{conjecture}
If $\A$ is increasing and balanced then there exists an increasing $\B$ represented by a linear
threshold function (i.e., $\B = \{x: \sum a_i x_i > t\}$ for nonnegative weights $a_i$),
such that $\Cov(\A,\B) \geq c \log n/\sqrt {n}$, for a universal constant $c$.
\end{conjecture}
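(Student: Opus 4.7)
The plan is to split on the size of the largest influence of $\A$ and to pick the linear threshold $\B$ adaptively. Set $\tau:=c_1\log n/\sqrt n$ for a small absolute constant $c_1>0$. If some coordinate $i_0$ has $I_{i_0}(\A)\geq\tau$, take $\B=\{x:x_{i_0}=1\}$, which is the (trivial) halfspace with weights $a_i=\mathbf{1}_{i=i_0}$. Since $\A$ is monotone, a one-line calculation gives $\Cov(\A,\B)=I_{i_0}(\A)/4\geq c_1\log n/(4\sqrt n)$, and the desired bound holds immediately.

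In the remaining case all influences are small, $I_i(\A)\leq\tau$. I would invoke Talagrand's influence inequality $\sum_i I_i(\A)/\log(1/I_i(\A))\geq c\,\mathrm{Var}(\A)$, which together with the uniform bound $\log(1/I_i(\A))\geq \tfrac12\log n-O(\log\log n)$ yields $\sum_i I_i(\A)\gtrsim\log n$. Cauchy--Schwarz then gives $\W_1(\A,\A)=\sum_i I_i(\A)^2\gtrsim\log^2 n/n$, i.e.\ $\|I(\A)\|_2:=\sqrt{\W_1(\A,\A)}\gtrsim\log n/\sqrt n$. I would then take $\B=\{x:\sum_i I_i(\A) x_i\geq t\}$ with $t$ chosen so that $\mu(\B)=1/2$; the nonnegativity of the weights makes $\B$ increasing.

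Because the weights $I_i(\A)$ are all $o(1)$ and $\B$ is balanced, a Berry--Esseen / invariance-principle calculation shows that $\B$ behaves like a Gaussian halfspace: the level-1 Fourier coefficients are $\hat{\B}(\{i\})=(c_2+o(1))I_i(\A)/\|I(\A)\|_2$, and the Fourier mass of $\B$ on levels $\geq 2$ is $O(1)$. Expanding $\Cov(\A,\B)=\sum_{S\neq\emptyset}\hat{\A}(S)\hat{\B}(S)$, the level-$1$ part is
\[
\sum_i\hat{\A}(\{i\})\hat{\B}(\{i\})\;\asymp\;\frac{\sum_i I_i(\A)^2}{\|I(\A)\|_2}\;=\;\|I(\A)\|_2\;\gtrsim\;\frac{\log n}{\sqrt n},
\]
which is exactly the target order.

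The main obstacle is controlling the higher-level piece $\sum_{|S|\geq 2}\hat{\A}(S)\hat{\B}(S)$, which could \emph{a priori} cancel the level-1 term. It is worth emphasizing that Theorem~\ref{Thm:Imp_Bound_Symmetric} applied to the pair $(\A,\B)$ constructed above (which is similar in the sense of the definition preceding that theorem) yields only $\Cov(\A,\B)\gtrsim\sqrt{\log n}/\sqrt n$: the factor $\sqrt{\log(e/\W_1(\A,\A))}\asymp\sqrt{\log n}$ in the denominator costs exactly the $\sqrt{\log n}$ we are trying to save, so the conjecture genuinely goes beyond the techniques developed in the present paper. My plan to close the gap is to combine Lemma~\ref{Lemma:Main} (which gives $\Cov(\A,\B)\geq\langle T_\rho\A,\B\rangle-\E[\A]\E[\B]$ for every $\rho\in[0,1]$) with a hypercontractive estimate exploiting the Gaussian behaviour of $\B$, aiming to bound the level-$\geq 2$ contribution in absolute value by $o(\|I(\A)\|_2)$ and thereby conclude $\Cov(\A,\B)\gtrsim\log n/\sqrt n$. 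Making this final step work uniformly over all balanced monotone $\A$ is where I expect all of the difficulty to lie.
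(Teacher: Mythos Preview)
The statement you are attempting is a \emph{conjecture} in the paper, not a theorem: the authors explicitly leave it open, noting only that the tribes example attains $\Theta(\log n/\sqrt n)$ against majority and that their own methods (the Corollary preceding the conjecture, derived from Theorem~\ref{Thm:Imp_Bound_Symmetric}) give only $c\sqrt{\log n}/\sqrt n$. So there is no paper proof to compare against; the question is whether your proposal closes the gap. It does not, and you are candid about this.

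Your Case~1 (a large influence) is clean and complete. Your Case~2 setup is also sound: Talagrand's influence-sum inequality combined with the cap $I_i(\A)\le\tau$ does yield $\sum_i I_i(\A)\gtrsim\log n$, and Cauchy--Schwarz then gives $\|I(\A)\|_2\gtrsim\log n/\sqrt n$, so the level-$1$ inner product with the matched LTF $\B$ is of the right order. One technical wrinkle you gloss over: your Berry--Esseen step needs the \emph{normalized} weights $I_i(\A)/\|I(\A)\|_2$ to be small, but with $\tau=c_1\log n/\sqrt n$ and $\|I(\A)\|_2$ possibly as small as $c\log n/\sqrt n$, the largest normalized weight can be a constant, so the invariance-principle approximation $\hat{\1_\B}(\{i\})\approx c_2\,I_i(\A)/\|I(\A)\|_2$ is not justified as written. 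This is probably repairable (lower $\tau$, or handle a bounded number of heavy coordinates separately), but it is a gap.

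The real obstruction is the one you name: bounding $\sum_{|S|\ge 2}\hat{\1_\A}(S)\hat{\1_\B}(S)$ by $o(\|I(\A)\|_2)$. Your proposed tool---Lemma~\ref{Lemma:Main} plus a hypercontractive estimate---is precisely the engine behind Theorem~\ref{Thm:Imp_Bound_Symmetric}, and you have already correctly computed that applying that theorem to your (similar) pair $(\A,\B)$ yields only $\sqrt{\log n}/\sqrt n$. Nothing in your sketch indicates how to extract the extra $\sqrt{\log n}$; ``exploiting the Gaussian behaviour of $\B$'' amounts to using $\W_1(\B)=\Theta(1)$, which is exactly what makes the $\sqrt{\log(e/\W_1(\B))}$ factor harmless but leaves the $\sqrt{\log(e/\W_1(\A))}\asymp\sqrt{\log n}$ factor intact. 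In short, your proposal reproduces the paper's Corollary by a natural route but does not go beyond it, and the conjecture remains open.
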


\medskip

Our next result gives a hybrid of the
bounds~\eqref{Eq:Talagrand} and~\eqref{Eq:Alt_Bound},
under a strong symmetry condition on only one of the families $\A,\B$.

\begin{definition}
A family $\A \subset \Omega_n$ is \emph{fully symmetric} if it is invariant under the action of $\mathbb{S}_n$.
\end{definition}
For example, while the Hamming balls considered above are fully symmetric, the tribes functions are
only weakly symmetric but not fully symmetric.

\begin{theorem}\label{Thm:Imp_Bound_Asymmetric}
Let $\A \subset \Omega_n$ be increasing and $\B \subset \Omega_n$ be increasing and fully symmetric. Then
\begin{equation}\label{Eq:Imp_Bound2}
\Cov(\A,\B) \geq c_1 \sum_{i=1}^n \frac{I_i(\A)}{\sqrt{\log \frac{e}{I_i(\A)}}}
\frac{I_i(\B)}{\sqrt{\log \frac{e^2}{\W_1(\B,\B)}}} \geq c_2
\frac{\mu(\B)(1-\mu(\B))}{\sqrt{n}}  \sum_{i=1}^n \frac{I_i(\A)}{\sqrt{\log \frac{e}{I_i(\A)}}},
\end{equation}
where $c_1,c_2$ are universal constants.
\end{theorem}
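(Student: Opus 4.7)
My plan is to adapt the proof of Theorem~\ref{Thm:Alt_Bound} so that the $\B$-side of the hypercontractive estimate is \emph{globalized}: the per-coordinate penalty $\sqrt{\log(e/I_i(\B))}$ is replaced by the single factor $\sqrt{\log(e^2/W_1(\B,\B))}$, which is substantially smaller when the influences $I_i(\B)$ are spread out---as full symmetry forces. With $f = \1_\A$ and $g = \1_\B$, applying Lemma~\ref{Lemma:Main} together with the Fourier identity $\tfrac{d}{d\rho}\langle T_\rho f, g\rangle = c_0 \sum_i \langle T_\rho D_i f, D_i g\rangle$ (where $D_i h(x) = h(x^{i\to 1}) - h(x^{i\to 0})$) yields
$$
\Cov(\A,\B) \;=\; c_0 \int_0^1 \sum_{i=1}^n \langle T_\rho D_i f, D_i g\rangle \, d\rho.
$$
Monotonicity of $f,g$ makes each $D_i f = \1_{A_i}$ and $D_i g = \1_{B_i}$ an indicator of a subset of $\{0,1\}^{[n]\setminus\{i\}}$ with $\mu(A_i) \asymp I_i(\A)$ and $\mu(B_i) \asymp I_i(\B)$; in particular every integrand is nonnegative.

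For each $i$ I would lower bound the inner integral via a two-function reverse-hypercontractive estimate. The key deviation from the proof of Theorem~\ref{Thm:Alt_Bound} is to tune the noise parameter $\rho_i$ as a function of $I_i(\A)$ alone (rather than of both $I_i(\A)$ and $I_i(\B)$), producing
$$
\int_0^1 \langle T_\rho \1_{A_i}, \1_{B_i}\rangle \, d\rho \;\gtrsim\; \psi(I_i(\A)) \cdot I_i(\B) \cdot \Xi_i
$$
with a $\B$-side correction $\Xi_i$. Summing over $i$ and invoking full symmetry (all $I_i(\B)$ equal to $I(\B)/n$, and $\sum_i I_i(\B)^2 = W_1(\B,\B)$), the individual $\Xi_i$ collapse into the single global factor $1/\sqrt{\log(e^2/W_1(\B,\B))}$, giving the first inequality in~\eqref{Eq:Imp_Bound2}. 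The improvement over~\eqref{Eq:Alt_Bound} is genuine because for fully symmetric $\B$ with $I(\B) \gg 1$, one has $\log(e/I_i(\B)) = \log(en/I(\B))$ strictly larger than $\log(e^2/W_1(\B,\B)) = \log(e^2 n/I(\B)^2)$.

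The second inequality in~\eqref{Eq:Imp_Bound2} is a direct computation: a monotone fully symmetric family must be a Hamming ball $\B = \{x : \sum_i x_i \ge t\}$, and Stirling's formula gives $I(\B) \asymp \sqrt{n}\cdot \mu(\B)(1-\mu(\B)) \cdot \sqrt{\log(e/(\mu(\B)(1-\mu(\B))))}$; substituting shows $(I(\B)/n)/\sqrt{\log(e^2 n/I(\B)^2)} \asymp \mu(\B)(1-\mu(\B))/\sqrt{n}$, which yields the second inequality from the first. The main obstacle is the asymmetric hypercontractive estimate above: one must carefully decouple the $\A$- and $\B$-sides so that the noise parameter depends only on $\A$-influences while the $\B$-side corrections aggregate into $W_1(\B,\B)$ only after summing over $i$---which is exactly where full symmetry of $\B$ becomes indispensable. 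Without full symmetry (with, e.g., only regularity) the aggregation step would fail and one would only recover~\eqref{Eq:Alt_Bound}.
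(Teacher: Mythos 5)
Your proposal takes a genuinely different route from the paper's, and unfortunately the route has a gap at the exact point where full symmetry is supposed to do its work. The paper proves Theorem~\ref{Thm:Imp_Bound_Asymmetric} by a Talagrand-style induction on $n$ (not via noise integrals or reverse hypercontractivity): after conditioning on the coordinate with the largest $I_j(g)$, it applies the convexity estimates of Lemmas~\ref{Lemma:T-Simple2} and~\ref{Lemma:T-Simple3}, reduces the error to $\sum_{i<n}|\hat f(\{i,n\})\hat g(\{i,n\})|$, Cauchy--Schwarzes, and bounds the $g$-factor by Corollary~\ref{Cor:T-Classic}. Full symmetry of $f$ enters \emph{precisely once}, through the algebraic identity
\[
\sum_{i=1}^{n-1} \hat f(\{i,n\})^2 \;=\; \frac{2}{n}\,\W_2(f),
\]
which allows $\W_2(f)$ to be controlled \emph{globally} by Lemma~\ref{Lemma:Talagrand}, i.e., by $\W_1(f)\log(e/\W_1(f))$, rather than coordinate-by-coordinate by $I_n(f)^2\log(e/I_n(f))$. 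That is the sole source of the improvement from $\sqrt{\log(e/I_i(\B))}$ to $\sqrt{\log(e^2/\W_1(\B,\B))}$, and it uses the full symmetric-group invariance of the second-level Fourier coefficients, not merely equal influences.

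In your sketch, full symmetry is invoked only through ``all $I_i(\B)$ equal to $I(\B)/n$, and $\sum_i I_i(\B)^2 = \W_1(\B,\B)$.'' The first statement is just regularity, and the second is Claim~\ref{Claim:Simple}(c), which holds for \emph{every} monotone Boolean function. So as described, your aggregation step would go through for any regular $\B$. But the remark following Example~3.2 in the paper gives a regular $\B$ for which the conclusion of Theorem~\ref{Thm:Imp_Bound_Asymmetric} is false. Hence either the ``$\B$-side correction'' $\Xi_i$ must secretly depend on finer structure of $B_i$ than $\mu(B_i)=I_i(\B)$ --- in which case you owe a precise statement of the reverse-hypercontractive estimate and an explanation of what symmetry gives you about the sets $B_i$ --- or the argument cannot yield the stated bound. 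As written, the crucial estimate
\[
\int_0^1 \langle T_\rho \1_{A_i}, \1_{B_i}\rangle\,d\rho \;\gtrsim\; \psi(I_i(\A))\cdot I_i(\B)\cdot \Xi_i
\]
is a placeholder: $\Xi_i$ is never defined, and there is no clear mechanism by which summing such bounds converts a per-coordinate quantity into $\sqrt{\log(e^2/\W_1(\B,\B))}$. The derivation of the second inequality in~\eqref{Eq:Imp_Bound2} from the first (via Hamming balls and Stirling) is fine, but it is the easy part.

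Two smaller points. The integral identity $\Cov(f,g)=\tfrac14\int_0^1\sum_i\langle T_\rho D_i f, D_i g\rangle\,d\rho$ is correct (with $c_0 = 1/4$), and each integrand is nonnegative for monotone $f,g$; also $\mu(A_i)=I_i(\A)$ exactly, not just up to constants. But note that $D_i f$, $D_i g$ are generally \emph{not} monotone, so Lemma~\ref{Lemma:Main} does not apply to the inner inner products; any ``tuning of $\rho_i$'' must be justified by the (unstated) reverse-hypercontractive inequality directly, not by monotonicity of $\rho\mapsto\langle T_\rho\1_{A_i},\1_{B_i}\rangle$.
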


It can be seen that~\eqref{Eq:Imp_Bound2} is always stronger than~\eqref{Eq:Alt_Bound}, but sometimes
weaker than~\eqref{Eq:Talagrand}. In particular, it is tight for the correlation of a small Hamming ball
and the ``majority'' family considered above. The proof of Theorem~\ref{Thm:Imp_Bound_Asymmetric} follows
Talagrand's original proof, with an enhancement that allows to handle different assumptions on
$\A,\B$ in a better way. Without additional assumptions on $\A$ and $\B$, the proof techniques of
Theorem~\ref{Thm:Imp_Bound_Asymmetric} give a new proof of Theorem~\ref{Thm:Alt_Bound}.

\medskip

Finally, we prove simple sufficient conditions for tightness of Talagrand's lower bound~\eqref{Eq:Talagrand},
and use them to show that~\eqref{Eq:Talagrand} is tight for several new examples, including (among others)
$\A$ representing an increasing
linear threshold function with low influences and $\B$ being $\A$'s dual.

\medskip

All our results extend \emph{verbatim} to bounded functions $f:\Omega_n \ra [-1,1]$, with influences
defined as
\[
I_k(f) = \E[|f(x)-f(x \oplus e_k)|].
\]
For sake of completeness, in the following sections we prove our results in the more general
form.\footnote{We note that there are several alternative generalizations of the notion of influences to
general functions on $\Omega_n$ (see, e.g.,~\cite{FHKL15} and the references therein).}

\medskip

This paper is organized as follows. In Section~\ref{sec:noise} we present the proof of
Lemma~\ref{Lemma:Main} and use it to simplify the  proof of Theorem~\ref{Thm:Talagrand-Correlation}. In Section \ref {s:similar}
we present the proof of
Theorem~\ref{Thm:Imp_Bound_Symmetric} along with some examples showing that the similarity conditions are necessary.
The (more involved) proof of Theorem~\ref{Thm:Imp_Bound_Asymmetric}
is given in Section~\ref{sec:asymmetric}. Section~\ref{sec:examples} features new tightness
examples of Talagrand's lower bound. We conclude the paper with a few open problems in
Section~\ref{sec:open}.

\section{Noise and Correlation}
\label{sec:noise}

\subsection{Preliminaries}

\begin{notation}
For $x, y \in \Omega_n$ we write $x \leq y$ if $x_i \leq y_i$ for all $i$.
A function $f: \Omega_n \ra \R$ is called \emph{increasing} or \emph{monotone} if $f(x) \leq f(y)$ whenever $x \leq y$.
\end{notation}

\nin The main technical tool used in this paper, as well as in Talagrand's work, is the Fourier-Walsh
expansion.
\begin{definition}
Let $f:\Omega_n  \rightarrow \mathbb{R}$. The Fourier-Walsh expansion of $f$ is the unique expansion
\[
f = \sum_{S \subset [n]} \alpha_S u_S,
\]
where for $T \subset [n]$,
\[
u_S(T)=(-1)^{|S \cap T|}.
\]
The coefficients $\alpha_S$ are also denoted by $\hat f(S)$, and the \emph{level} of the coefficient
$\hat f(S)$ is $|S|$.
\end{definition}
\nin
Since
$\{u_S\}_{S \subset [n]}$ is an orthonormal basis for the
function space $\mathbb{R}^{\Omega_n}$ (relative to
the usual inner product $\langle\cdot,\cdot\rangle$
with respect to uniform
measure), the representation is indeed unique, with
$\hat f(S) = \langle f ,u_S\rangle$,
and we have {\em Parseval's identity:}
\beq{Parseval}
\langle f,g\rangle=\sum \hat{f}(S)\hat{g}(S)~~~ \forall f,g.
\enq
The noise operator $T_\rho$ has a simple representation in terms of the Fourier-Walsh
expansion: For any $f = \sum_S \hat f(S) u_S$ and $\rho \in [0,1]$, we have
\begin{equation}
T_{\rho} f = \sum_S \rho^{|S|} \hat f(S) u_S.
\label{Eq:Beckner0}
\end{equation}
A standard operator we consider is the $i^{th}$ discrete derivative:
\begin{definition}
For $i \in [n]$, define $\Delta_i: \mathbb{R}^{\Omega_n} \rightarrow \mathbb{R}^{\Omega_n}$ by
$\Delta_i f(x) = \frac{1}{2} [f(x)-f(x \oplus e_i)]$.
\end{definition}
\nin It is easy to see that the Fourier expansion of $\Delta_i f$ is
\[
\Delta_i f = \sum_{S \ni i} \hat f(S) u_{S}.
\]
\nin We use the following basic properties of the Fourier expansion and the noise operator:
\begin{claim}\label{Claim:Simple}

\mn
{\rm (a)} For any $f,g: \Omega_n \ra \R$, we have $\mathrm{Cov}(f,g) = \sum_{S \neq \emptyset} \hat f(S) \hat g(S)$
(this follows immediately from~\eqref{Parseval}, since $\hat f(\emptyset) \hat g(\emptyset) = \E[f] \E[g]$).

\mn
{\rm (b)} For any increasing $f:\Omega_n \ra \R$ and any $\rho \in [0,1]$,
$T_{\rho} f$ is increasing (see, e.g.,~\cite[Proof of Proposition 4.4]{Keller10}).

\mn
{\rm (c)} For any increasing $f:\Omega_n \ra [-1,1]$ and any $i \in [n]$,
$I_i(f) = \hat f(\{i\})$ (this follows immediately from the definitions).
As a result, $\sum_i I_i(f)^2 = \sum_i \hat f(\{i\})^2 \leq 1$ by~\eqref{Parseval}.
\end{claim}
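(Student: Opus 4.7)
The three items are essentially independent short consequences of the Fourier setup already in place, so I would verify each separately and do not expect any real obstacle. For part (a), I would simply substitute $\hat f(\emptyset) = \E[f]$ and $\hat g(\emptyset) = \E[g]$ (using that $u_\emptyset \equiv 1$) into Parseval's identity~\eqref{Parseval}, and subtract $\E[f]\E[g]$ from both sides of $\E[fg] = \sum_S \hat f(S)\hat g(S)$.

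For part (b), I would avoid Fourier entirely and argue by a monotone coupling. Given $x \leq y$, for each coordinate $i$ independently draw a Bernoulli$(\rho)$ indicator $\xi_i$ and a uniform bit $\eta_i$, and define both $N_\rho(x)$ and $N_\rho(y)$ coordinatewise by the same recipe (keep the original coordinate if $\xi_i = 1$, else replace it by $\eta_i$) using the same pair $(\xi_i,\eta_i)$. Then $N_\rho(x)_i \leq N_\rho(y)_i$ almost surely for every $i$, so $f(N_\rho(x)) \leq f(N_\rho(y))$ pointwise by monotonicity of $f$, and taking expectations gives $T_\rho f(x) \leq T_\rho f(y)$.

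For part (c), I would compute $\hat f(\{i\}) = \langle f, u_{\{i\}}\rangle$ directly by splitting the inner product according to the value of $x_i$ and pairing $x$ with $x \oplus e_i$: this identifies $\hat f(\{i\})$, up to the $\pm 1$ normalization of $u_{\{i\}}$, with $\E[f \mid x_i = 1] - \E[f \mid x_i = 0]$. Since $f$ is monotone, for $x_i = 0$ one has $|f(x) - f(x \oplus e_i)| = f(x \oplus e_i) - f(x)$, with the symmetric identity when $x_i = 1$; averaging then gives $I_i(f) = \E[f \mid x_i = 1] - \E[f \mid x_i = 0]$, matching the Fourier expression. The bound $\sum_i \hat f(\{i\})^2 \leq 1$ is then immediate from~\eqref{Parseval} and $|f|\leq 1$, since $\sum_S \hat f(S)^2 = \E[f^2]$. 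The only thing requiring care is tracking the sign and factor of $2$ convention built into $u_{\{i\}}(T) = (-1)^{|\{i\}\cap T|}$, but this is purely bookkeeping.
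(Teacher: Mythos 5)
Your treatment of parts (a) and (c) follows the paper's own brief justifications; part (a) is exactly the stated observation. For part (b), where the paper only cites~\cite{Keller10}, your monotone coupling gives a short self-contained proof: conditioning on the shared randomness $(\xi_i,\eta_i)_{i\le n}$ yields $N_\rho(x)\le N_\rho(y)$ coordinatewise whenever $x\le y$, and taking expectations of $f(N_\rho x)\le f(N_\rho y)$ gives $T_\rho f(x)\le T_\rho f(y)$. This is preferable to deducing monotonicity of $T_\rho f$ from the Fourier formula~\eqref{Eq:Beckner0}, since the coupling makes it transparent and works verbatim over any product measure.

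On part (c), the normalization issue you flag is real and should be resolved rather than deferred as ``bookkeeping.'' With the paper's stated convention $u_{\{i\}}(T)=(-1)^{|\{i\}\cap T|}$, which is \emph{decreasing} in $x_i$, your pairing computation gives
\[
\hat f(\{i\})=\tfrac12\bigl(\E[f\mid x_i=0]-\E[f\mid x_i=1]\bigr),
\]
whereas for increasing $f$ the definition $I_i(f)=\E[|f(x)-f(x\oplus e_i)|]$ evaluates to $\E[f\mid x_i=1]-\E[f\mid x_i=0]$. Hence the literal identity is $I_i(f)=-2\hat f(\{i\})=2|\hat f(\{i\})|$, not $I_i(f)=\hat f(\{i\})$ as stated. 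This reflects a clash between the sign chosen for $u_S$ and the normalization of the influence; note that the discrete derivative $\Delta_i f=\tfrac12[f(x)-f(x\oplus e_i)]$ does satisfy $|\hat f(\{i\})|=\E[|\Delta_i f|]$ cleanly, and that is the quantity the paper implicitly wants. The discrepancy is an absolute constant and vanishes in every downstream use --- $\W_1$ and $\W_d$ are defined directly in Fourier terms, the universal constants $c$ absorb the factors, and the Parseval bound $\sum_i\hat f(\{i\})^2\le\E[f^2]\le 1$ is unaffected --- but a careful write-up should either state the corrected identity $|\hat f(\{i\})|=\tfrac12 I_i(f)$ or switch to the convention $u_S(T)=(-1)^{|S\setminus T|}$ and adjust the influence normalization to match.
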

\nin For more background on the Fourier-Walsh expansion the reader is referred to~\cite{O'Donnell14}.

\subsection{Noise decreases correlation}

We present two proofs of Lemma~\ref{Lemma:Main}, which essentially shows that application of noise
reduces the correlation of increasing functions. The first proof uses the Fourier-Walsh expansion
of the noise operator, while the second uses only the chain rule and resembles the simple proof of
Russo's lemma~\cite{Russo}. Recall the formulation of the Lemma:

\mn \textbf{Lemma.}
Let $f,g: \Omega_n \ra \R$ be increasing. Then the function $h(\rho) = \langle T_\rho f, g \rangle$
is non-decreasing.

\medskip

\noindent \textbf{First Proof.} First, we note that since for any decomposition $\rho = \rho_1 \cdot \rho_2$
we have $T_\rho f = T_{\rho_2} (T_{\rho_1} f)$, and since $T_{\rho'} f$ is increasing for any
$\rho'$ (Claim~\ref{Claim:Simple}(b)), it is sufficient to show that $h'(1)$ is nonnegative.
By~\eqref{Parseval} and~\eqref{Eq:Beckner0}, we have
\[
h(\rho) = \langle T_\rho f, g \rangle = \sum_S \rho^{|S|} \hat f(S) \hat g(S),
\]
and thus,
\[
h'(1) = \sum_S |S| \hat f(S) \hat g(S) = \sum_i \sum_{S \ni i} \hat f(S) \hat g(S) =
\sum_i \langle \Delta_i f, \Delta_i g \rangle,
\]
the last equality using~\eqref{Parseval} once again. This completes the proof, as
$\Delta_i f(x) \cdot \Delta_i g(x) \geq 0$ for any $x$ by the monotonicity of $f,g$.

\medskip

\noindent \textbf{Second Proof.} Define an ``asymmetric'' noise operator $T_{\rho_1,\ldots,\rho_n}$
by
\[
T_{\rho_1,\ldots,\rho_n} f(x) = \mathbb{E} [f(N_{\rho_1,\ldots,\rho_n} x)],
\]
where $N_{\rho_1,\ldots,\rho_n}(x)$ is obtained from $x$ by leaving the $i$'th coordinate of $x$
unchanged with probability $\rho_i$ and replacing it by a random value with probability $1-\rho_i$.
As $h(\rho) = \langle T_{\rho,\rho,\ldots,\rho} f,g \rangle$, we can apply the \emph{chain rule} to assert
\[
h'(1) = \sum_{i=1}^n \frac{\partial}{\partial \rho_i} \langle T_{\rho_1,\ldots,\rho_n} f,g
\rangle \Big{|}_{(\rho_1,\ldots,\rho_n)=(1,\ldots,1)} = \sum_i \langle \Delta_i f, \Delta_i g \rangle.
\]
The rest of the argument is the same as in the first proof.

\begin{remark}
Note that as $\E[T_\rho f] = \E[f]$, an equivalent formulation of Lemma~\ref{Lemma:Main}
is that the function $\rho \mapsto \mathrm{Cov}(T_\rho f, g)$ is non-decreasing. This
formulation will be used in the proof of Theorem~\ref{Thm:Talagrand-Correlation} below.
\end{remark}

\subsection{A simpler proof of Talagrand's inequality}

\begin{notation}
From now on, for $f,g: \Omega_n \ra \R$ and $d \in \mathbb{N}$, we denote $\W_d(f,g) = \sum_{|S|=d} \hat f(S) \hat g(S)$
and $\W_d(f) = \W_d(f,f)$. Note that for increasing $\A$ and $\B$, $\W_1(1_{\A},1_{\B}) = \sum_i I_i(\A)I_i(\B)$.
\end{notation}

\nin A generalized formulation of Theorem~\ref{Thm:Talagrand-Correlation} (using Claim~\ref{Claim:Simple}(c))
is the following:

\mn \textbf{Theorem.} Let $f,g: \Omega_n \ra [-1,1]$ be increasing. Then
\begin{equation}
\mathrm{Cov}(f,g) \geq c \W_1(f,g) \left(\log (e/ \W_1(f,g)) \right)^{-1},
\end{equation}
where $c$ is a universal constant.

\mn The original proof of Theorem~\ref{Thm:Talagrand-Correlation} presented in~\cite{Talagrand96}
consists of two parts. The first part which is more complex and which we keep virtually unchanged, is proving the following
lemma, which bounds the second-level Fourier-Walsh coefficients of $f,g$ in terms of the
first-level ones:
\begin{lemma}[Talagrand]\label{Lemma:Talagrand}
Let $f,g:\Omega_n \rightarrow [-1,1]$ be increasing. Then
\[
\W_2(f,g) \leq c \W_1(f,g) \log (e/\W_1(f,g)),
\]
where $c$ is a universal constant.
\end{lemma}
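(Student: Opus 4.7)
The approach is Talagrand's original hypercontractive argument, for which the principal ingredient is a Bonami--Beckner level-$1$ inequality applied to the discrete derivative. For each $i\in[n]$, let $D_i f(x) = f(x|_{x_i = 1}) - f(x|_{x_i = 0})$, viewed as a function on $\Omega_{n-1}$. Monotonicity and boundedness of $f$ imply $D_i f \geq 0$, $\|D_i f\|_\infty \le 2$, and $\E[D_i f] = I_i(f)$, while its Fourier expansion satisfies $\widehat{D_i f}(\{j\}) = -2\hat f(\{i,j\})$ for $j \neq i$. Consequently
\[
\W_2(f,g) = \frac{1}{8}\sum_i \langle (D_i f)^{=1}, (D_i g)^{=1}\rangle.
\]

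The second ingredient is a Bonami--Beckner level-$1$ inequality for bounded nonnegative functions: if $h \ge 0$ satisfies $\|h\|_\infty \le M$ and $\E[h] = \alpha$, then $\|h^{=1}\|_2^2 \le C\alpha^2\log(eM/\alpha)$. The proof combines H\"older's inequality $\langle h^{=1},h\rangle \le \|h^{=1}\|_q\|h\|_p$ (with $1/p+1/q=1$), Bonami's bound $\|h^{=1}\|_q \le \sqrt{q-1}\,\|h^{=1}\|_2$, and the moment estimate $\|h\|_p \le M^{1-1/p}\alpha^{1/p}$ (coming from $h\le M$), followed by optimizing $p \in (1,2]$. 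Applied to $D_i f$, this gives $\sum_{j \neq i} \hat f(\{i,j\})^2 \le C I_i(f)^2\log(e/I_i(f))$, and analogously for $g$.

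The delicate final step aggregates these per-index bounds into $\W_2(f,g) \le c\,\W_1(f,g)\log(e/\W_1(f,g))$. A naive per-$i$ Cauchy--Schwarz would yield only the Theorem~\ref{Thm:Alt_Bound}-type bound $\W_2(f,g) \le C\sum_i I_i(f)I_i(g)\sqrt{\log(e/I_i(f))\log(e/I_i(g))}$, which is in general strictly weaker than Talagrand's. To recover the joint log factor one instead performs the H\"older step globally, with a single exponent $p$ depending on $\W_1(f,g)$ rather than on individual influences: rewrite $\sum_i \langle (D_i f)^{=1}, D_i g\rangle$ (using orthogonality to drop higher levels of $D_i g$), apply H\"older with one pair $(p,q)$, bound the first factor via Bonami-hypercontractivity on the degree-$2$ polynomial $\sum_i u_{\{i\}}(D_i f)^{=1}$, and control the second factor by a moment estimate involving $\W_1(g)$. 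A symmetric estimate (with the roles of $f$ and $g$ swapped) combined with an optimal choice of $p$ in terms of $\W_1(f,g)$ produces the common log factor.

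I expect the main obstacle to be precisely this joint hypercontractive step---packaging the per-index functions $D_i f$ into a single object on which hypercontractivity applies with the correct dependence of $p$ on $\W_1(f,g)$ rather than on individual $I_i(f)$, $I_i(g)$---since this is where Talagrand's proof extracts one common logarithm instead of the per-coordinate logarithms that a direct Cauchy--Schwarz would leave behind.
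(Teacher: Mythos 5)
The paper does not prove Lemma~\ref{Lemma:Talagrand}: the authors quote it from Talagrand's 1996 paper~\cite{Talagrand96}, state explicitly that this part of the argument ``remains unchanged,'' and point to~\cite{KK13} for a somewhat simpler (but still rather complex) proof. So your sketch must be weighed against Talagrand's original argument, not against anything proved in this paper.

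Your setup reproduces the standard first steps of that argument correctly: $D_i f \geq 0$, $\|D_i f\|_\infty \leq 2$, $\E[D_i f] = I_i(f)$, $\widehat{D_i f}(\{j\}) = -2\hat f(\{i,j\})$, the identity $\W_2(f,g) = \tfrac18\sum_i\langle(D_i f)^{=1},(D_i g)^{=1}\rangle$, and the Bonami level-1 inequality $\sum_{j\neq i}\hat f(\{i,j\})^2 \leq C\,I_i(f)^2\log(e/I_i(f))$, which is exactly Corollary~\ref{Cor:T-Classic}. You also correctly diagnose the crux: the per-coordinate Cauchy--Schwarz aggregation yields only $\W_2(f,g) \leq C\sum_i I_i(f)I_i(g)\sqrt{\log(e/I_i(f))\log(e/I_i(g))}$, and for, say, $\W_1(f,g)=\Theta(1)$ with all influences of order $n^{-1/2}$ this exceeds the target $\W_1(f,g)\log(e/\W_1(f,g))$ by a factor of order $\log n$. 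So a genuinely global step is required, as you say.

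The gap is precisely in that global step, which you gesture at but do not carry out --- and the specific object you propose does not help. Writing $F := \sum_i u_{\{i\}}(D_i f)^{=1}$, its Fourier coefficients are $\hat F(\{i,j\}) = \widehat{D_i f}(\{j\}) + \widehat{D_j f}(\{i\}) = -4\hat f(\{i,j\})$, so $F = -4f^{=2}$; applying hypercontractivity gives $\|F\|_q \leq (q-1)\|F\|_2 = 4(q-1)\sqrt{\W_2(f)}$, which reintroduces the quantity $\W_2(f)$ you are trying to bound. The ensuing H\"older estimate is circular and delivers nothing beyond Cauchy--Schwarz. Extracting the single logarithm in $\W_1(f,g)$ (rather than per-coordinate logarithms) is the genuinely hard content of Talagrand's proof, and also of the proof in~\cite{KK13}; it is not achieved by a single global H\"older on $f^{=2}$. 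You are candid in flagging this step as the expected obstacle, and that assessment is accurate: the sketch lays out the right scaffolding and identifies the right difficulty, but it does not surmount it.
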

This lemma appears to be of independent interest, and probably has more applications than
Theorem~\ref{Thm:Talagrand-Correlation}. A somewhat simpler (but still rather complex) proof
of the lemma is given in~\cite{KK13}, along with some generalizations.

Our argument
uses the following generalization of Lemma~\ref{Lemma:Talagrand}, proved in~\cite{BKS}
(a qualitative version) and in~\cite{KK13} (a quantitative version).
\begin{lemma}\label{Lemma:Talagrand-d}
For all increasing $f,g:\Omega_n \rightarrow [-1,1]$, and for all $2 \leq d \leq \log(e/\W_1(f,g))/2$,
\[
\W_d(f,g) \leq \frac{5e}{d} \left(\frac{2e}{d-1} \right)^{d-1} \W_1(f,g)
\left(\log (d/\W_1(f,g))\right)^{d-1}.
\]
\end{lemma}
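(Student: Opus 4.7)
\medskip

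\noindent\textbf{Proof plan.} The plan is induction on $d$, with base case $d = 2$ being Lemma~\ref{Lemma:Talagrand}. For the inductive step, I would exploit the Fourier identity
\[
\W_d(f,g) \;=\; \frac{1}{d}\sum_{i=1}^n \W_{d-1}(D_i f,\, D_i g),
\]
where $D_i$ denotes the discrete partial derivative whose Fourier coefficients satisfy $\widehat{D_i f}(S) = \hat f(S \cup \{i\})$ for $i \notin S$, viewed as a function on the remaining $n-1$ coordinates. This identity reduces control of $\W_d(f,g)$ to a sum of level-$(d-1)$ weights of the derivatives. The obstacle is that when $f$ is monotone, $D_i f$ is only non-negative (with $L^1$-norm $I_i(f)$) and not itself monotone, so one cannot invoke the inductive hypothesis on the pair $(D_i f, D_i g)$ directly.

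To circumvent this I would bound $\W_{d-1}(D_i f)$ via hypercontractivity: apply the Bonami--Beckner inequality to $T_\rho(D_i f)$ for an optimally chosen noise parameter $\rho = \rho(d, I_i(f))$, and use $\|D_i f\|_\infty \leq 1$ together with $\|D_i f\|_1 = I_i(f)$ to control $\|D_i f\|_q$ via log-convexity of the $L^q$ norms by $I_i(f)^{1/q}$. Cauchy--Schwarz then handles the mixed term $\W_{d-1}(D_i f, D_i g) \leq \sqrt{\W_{d-1}(D_i f)\,\W_{d-1}(D_i g)}$. Summing over $i$ and substituting back into the Fourier identity yields, after optimization over $\rho$, a bound of the form $\W_d(f,g) \lesssim \W_{d-1}(f,g)\cdot \log(d/\W_1(f,g))/c_d$ (with a constant $c_d$ of order $d$), which on iteration produces the target logarithmic power $\log^{d-1}(d/\W_1(f,g))$.

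The main technical difficulty is tracking the constants through the $d-1$ iterations: each step multiplies the prefactor by roughly $2e/k$ and picks up one additional $\log$ factor, so telescoping (with Stirling's estimate for the resulting factorial) yields the claimed $(5e/d)(2e/(d-1))^{d-1}\log^{d-1}(d/\W_1)$ form. The hypothesis $d \leq \log(e/\W_1(f,g))/2$ is precisely what is needed to keep the optimal noise parameter inside $(0,1)$ and the logarithm $\log(d/\W_1(f,g))$ well-defined and of the correct sign throughout the iteration.
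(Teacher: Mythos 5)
The paper does not in fact give a proof of this lemma: it is cited from BKS (qualitative form) and from Keller--Kindler (quantitative form), with the remark that "the proof of Lemma~\ref{Lemma:Talagrand-d} is essentially the same as the proof of Lemma~\ref{Lemma:Talagrand}." So your proposal is necessarily a reconstruction, and I am judging it on whether it would go through.

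Your opening observations are sound: the identity $\W_d(f,g)=\frac{1}{d}\sum_i \W_{d-1}(D_i f, D_i g)$ is correct, and you are right that $D_i f$ fails to be monotone so the inductive hypothesis cannot be applied to the derivative pair. But the fix you propose has a genuine gap. Applying a level-$(d-1)$ inequality (Bonami--Beckner plus the $\|D_i f\|_1=I_i(f)$, $\|D_i f\|_\infty\leq 1$ interpolation) to each $D_i f$ produces a bound of the form $\W_{d-1}(D_i f)\lesssim C^{d-1}\, I_i(f)^2\,\log^{d-1}\!\bigl(e/I_i(f)\bigr)$; after Cauchy--Schwarz and the sum over $i$, what you get is
\[
\W_d(f,g)\ \lesssim\ \frac{C^{d-1}}{d}\sum_i I_i(f)I_i(g)\,\log^{(d-1)/2}\!\tfrac{e}{I_i(f)}\,\log^{(d-1)/2}\!\tfrac{e}{I_i(g)},
\]
which is \emph{not} of the form $\W_{d-1}(f,g)\cdot\log(\cdot)$ as you claim, and does not obviously collapse to the target $\W_1(f,g)\log^{d-1}(d/\W_1(f,g))$. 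Indeed it cannot in general: take $I_i(f)=I_i(g)=\varepsilon/\sqrt{n}$ for all $i$, so $\W_1(f,g)=\varepsilon^2$; the sum above is $\asymp \varepsilon^2\log^{d-1}(n/\varepsilon^2)$, which for $n$ large is much bigger than $\varepsilon^2\log^{d-1}(d/\varepsilon^2)$. So the per-coordinate log factors $\log(1/I_i)$ do not aggregate to a single $\log(1/\W_1)$, and the concavity of $x\mapsto x\log^{d-1}(1/x)$ does not help (Jensen runs the wrong way). This is exactly the step that the actual BKS/Keller--Kindler argument is designed to circumvent.

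The missing idea is that one must retain monotonicity, which your plan discards at the very first derivative. The known proofs keep monotone objects in play throughout: Talagrand's $d=2$ argument uses Corollary~\ref{Cor:T-Classic} (which already packages a hypercontractive input) together with a careful choice of the coordinate of maximal influence and a convexity lemma (Lemma~\ref{Lemma:T-Simple}) to control the log factors uniformly; the higher-$d$ extension in BKS uses random restrictions, for which the restricted functions remain monotone and the level-1 inequality can be applied directly after restriction, with the restriction probability $\rho$ playing the optimization role you assign to the noise rate. Your intuition about where the hypothesis $d\leq \log(e/\W_1)/2$ enters and how the constant $(2e/(d-1))^{d-1}$ arises from iterating is in the right spirit, but without a device that ties the per-coordinate logarithms to a single global quantity the argument as written does not close.
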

The proof of Lemma~\ref{Lemma:Talagrand-d} is essentially the same as the proof of
Lemma~\ref{Lemma:Talagrand}. To simplify notations, we denote $C(d) = \frac{5e}{d}
\left(\frac{2e}{d-1} \right)^{d-1}$, as in the sequel we use only the fact that  $C(d) =O(1)$.

The second part of Talagrand's proof, is a somewhat complex inductive argument that deduces
the theorem from Lemma~\ref{Lemma:Talagrand}. We show now that the inductive argument in the second part of
Talagrand's proof
can be replaced with a very simple argument, using Lemma~\ref{Lemma:Main}.

\medskip

Before presenting the proof, we explain the main idea behind it, which is quite different
from the ideas used in Talagrand's proof. By Claim~\ref{Claim:Simple}(a),
the correlation can be expressed in terms of the Fourier-Walsh coefficients as
$\mathrm{Cov}(f,g) = \sum_{S \neq \emptyset} \hat f(S) \hat g(S)$. By Claim~\ref{Claim:Simple}(c),
all first-level terms in the right hand side (i.e., all terms with $|S|=1$) are nonnegative and
their total contribution is $\W_1(f,g)$. The other terms may be negative, and the assertion of
the theorem is that they cannot be ``too negative'', in the sense that their total contribution
is bounded from below by $-\W_1(f,g) + c \W_1(f,g) (\log(e/\W_1(f,g)))^{-1}$. Hence, our goal is to bound
from below the contribution of all levels $d \geq 2$.

To obtain this, we use the noise operator $T_\rho$ whose application suppresses the high-level coefficients.
By replacing $\mathrm{Cov}(f,g)$ with $\mathrm{Cov}(T_\rho f,g)$ for an appropriate choice of $\rho$,
we obtain an expression $\sum_{S \neq \emptyset} \rho^{|S|} \hat f(S) \hat g(S)$ in which the (possibly
negative) contribution of all levels $d \geq 2$ is dominated by the positive contribution of the first
level. Lemma~\ref{Lemma:Main} then allows to go back to $\mathrm{Cov}(f,g)$.

\mn \textbf{Proof of Theorem~\ref{Thm:Talagrand-Correlation}}. Let $\rho = c_0\left(\log (e/\W_1(f,g))\right)^{-1}$,
where $c_0$ is a sufficiently small constant. By Claim~\ref{Claim:Simple}(a) and
Equation~\eqref{Eq:Beckner0},
\begin{align*}
\mathrm{Cov}(T_\rho f, g) &= \sum_{S \neq \emptyset} \rho^{|S|} \hat f(S) \hat g(S) = \sum_{d \geq 1} \rho^d \W_d(f,g)= \\
&= \rho \W_1(f,g)+ \sum_{2 \leq d \leq \log(e/\W_1(f,g))/2} \rho^d \W_d(f,g) + \sum_{d > \log(e/\W_1(f,g))/2} \rho^d \W_d(f,g).
\end{align*}
By Lemma~\ref{Lemma:Talagrand-d}, for every $2 \leq d \leq \log(e/\W_1(f,g))/2$ we have
\[
\rho^d \W_d(f,g) \leq \rho^d C(d) \W_1(f,g) \left(\log (d/\W_1(f,g))\right)^{d-1} \leq
2^{-d} \rho \W_1(f,g),
\]
where the last inequality holds by the choice of $\rho$ (here is where $c_0$ should be
taken sufficiently small). For any $d > \log(e/\W_1(f,g))/2$, we use the bound
\[
\rho^d \W_d(f,g) \leq \rho^d < 2^{-d} \rho \W_1(f,g).
\]
Combining, we get
\begin{equation}\label{Eq:Tal-new}
\sum_{d \geq 2} |\rho^d \W_d(f,g)| \leq \sum_{d \geq 2} 2^{-d} \rho \W_1(f,g) \leq \rho \W_1(f,g) /2.
\end{equation}
Hence,
\[
\mathrm{Cov} (T_\rho f, g) = \rho \W_1(f,g) + \sum_{d \geq 2} \rho^d \W_d(f,g)
\geq \rho \W_1(f,g)/2 = c' \W_1(f,g) \left(\log (e/\W_1(f,g))\right)^{-1}.
\]
Therefore, by Lemma~\ref{Lemma:Main},
\[
\mathrm{Cov}(f,g) \geq \mathrm{Cov} (T_\rho f, g) \geq c' \W_1(f,g) \left(\log (e/\W_1(f,g))\right)^{-1},
\]
as asserted.

\begin{remark}
We stress that the new proof replaces only the inductive part of Talagrand's proof.
The main part of the proof (i.e., the proof of Lemma~\ref{Lemma:Talagrand}) remains
unchanged.
\end{remark}

\section{Improved Correlation Bound Under Similarity}
\label{s:similar}

In this section we present the proof of Theorem~\ref{Thm:Imp_Bound_Symmetric}, and demonstrate by several
examples that the similarity assumption in the theorem is essential.

\subsection{Proof of Theorem~\ref{Thm:Imp_Bound_Symmetric}}

\nin A generalized statement of Theorem~\ref{Thm:Imp_Bound_Symmetric} (using Claim~\ref{Claim:Simple}(c)
once again) is:

\mn \textbf{Theorem.} Let $f,g: \Omega_n \ra [-1,1]$ be increasing and similar. Then
\begin{equation}\label{Eq:Imp_Symmetric_Proof}
\mathrm{Cov(f,g)} \geq c \W_1(f,g) \left(\log (e/ \W_1(f)) \right)^{-1/2}
\left(\log (e/ \W_1(g)) \right)^{-1/2},
\end{equation}
where $c$ is a universal constant.

\mn We note that the lower bound of Theorem~\ref{Thm:Imp_Bound_Symmetric} is stronger (up to a constant) than
the bounds of Theorems~\ref{Thm:Talagrand-Correlation} and~\ref{Thm:Alt_Bound}:
\begin{claim}\label{Claim:Comparison-new}
Let $f,g: \Omega_n \ra [-1,1]$ be increasing. Then

\mn
{\rm (a)} $\W_1(f,g) \left(\log (e/ \W_1(f)) \right)^{-1/2} \left(\log (e/ \W_1(g)) \right)^{-1/2} \geq
\W_1(f,g) (\log(e/\W_1(f,g))^{-1}$,

\mn
{\rm (b)} $\W_1(f,g) \left(\log (e/ \W_1(f)) \right)^{-1/2}
\left(\log (e/ \W_1(g)) \right)^{-1/2} \geq 0.5 \sum_{i=1}^n \frac{I_i(f)}{\sqrt{\log \frac{e}{I_i(f)}}}
\frac{I_i(g)}{\sqrt{\log \frac{e}{I_i(g)}}}$.
\end{claim}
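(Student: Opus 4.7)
The plan is to prove both parts by simple applications of Cauchy--Schwarz (or Parseval) together with AM--GM. Write $a = \W_1(f)$, $b = \W_1(g)$, $c = \W_1(f,g)$. By Claim~\ref{Claim:Simple}(c), $a, b \in [0,1]$, and by Cauchy--Schwarz $c \le \sqrt{ab}$ and $I_i(f) \le \sqrt{a}$ for every $i$ (and similarly for $g$). These two elementary estimates will drive everything.

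For part (a), I would divide both sides by $\W_1(f,g)$ (the statement is vacuous when $c = 0$) to reduce to the inequality
\[
\log(e/c) \;\ge\; \sqrt{\log(e/a)\,\log(e/b)}.
\]
Using $c \le \sqrt{ab}$ one has $\log(e/c) \ge \log(e/\sqrt{ab})$. Setting $u = \log(e/a)$ and $v = \log(e/b)$ (both $\ge 1$), a short computation gives $\log(e/\sqrt{ab}) = (u+v)/2$, and AM--GM yields $(u+v)/2 \ge \sqrt{uv}$, which is exactly the required bound.

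For part (b), the key observation is that $I_i(f)^2 \le a$ implies
\[
\log(e/I_i(f)) \;\ge\; \log(e/\sqrt{a}) \;=\; 1 + \tfrac{1}{2}\log(1/a) \;\ge\; \tfrac{1}{2}\log(e/a),
\]
and similarly for $g$. Hence $\sqrt{\log(e/I_i(f))\,\log(e/I_i(g))} \ge \tfrac{1}{2}\sqrt{\log(e/a)\,\log(e/b)}$ for every $i$. Applying this pointwise bound under the sum on the right-hand side of (b) and then pulling the (now $i$-independent) denominator outside gives
\[
\sum_i \frac{I_i(f)\,I_i(g)}{\sqrt{\log(e/I_i(f))\,\log(e/I_i(g))}} \;\le\; \frac{2\,\W_1(f,g)}{\sqrt{\log(e/a)\,\log(e/b)}},
\]
which is precisely the $0.5$-factor inequality claimed.

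Neither part presents a genuine obstacle; the only things to check carefully are the ranges of $a, b, c$ (so the logarithms are well-defined and nonnegative) and the edge cases when one of $a, b, c$ vanishes, which are trivial. The proof is essentially two lines of AM--GM/Cauchy--Schwarz once the right quantities are named.
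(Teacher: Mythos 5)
Your proof is correct and follows essentially the same two-step route as the paper: for (a), Cauchy--Schwarz gives $\W_1(f,g)\le\sqrt{\W_1(f)\W_1(g)}$ and AM--GM on the logarithms finishes; for (b), the pointwise bound $\log(e/I_i(f))\ge\tfrac12\log(e/\W_1(f))$ (obtained from $I_i(f)^2\le\W_1(f)$) is pulled out of the sum, yielding exactly the factor $0.5$. The only cosmetic difference is that the paper derives the part (b) estimate via the identity $\sqrt{\log(e/I_i(f))}=\sqrt{1/2}\,\sqrt{\log(e^2/I_i(f)^2)}$, whereas you massage $\log(e/\sqrt{a})$ directly; the content is identical.
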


\begin{proof}
For~(a), as the numerators are equal, it is sufficient to compare the denominators. We have
\begin{align*}
\log(e/\W_1(f,g)) \geq \log(\frac{e}{\sqrt{\W_1(f)} \sqrt{\W_1(g)}}) &= \frac{1}{2}
\left(\log(\frac{e}{\W_1(f)}) + \log(\frac{e}{\W_1(g)}) \right)  \\ &\geq
\sqrt{\log (e/ \W_1(f))} \sqrt{\log (e/ \W_1(g))},
\end{align*}
where the first inequality uses Cauchy-Schwarz and the second uses the inequality between the arithmetic and
geometric means.

\mn The inequality~(b) is immediate, as for any $i$ we have
\[
\sqrt{\log \frac{e}{I_i(f)}} = \sqrt{1/2} \sqrt{\log \frac{e^2}{I_i(f)^2}} \geq \sqrt{1/2} \sqrt{\log (e/ \W_1(f))},
\]
and similarly for $g$.
\end{proof}

\mn The strategy of the proof of~\eqref{Eq:Imp_Symmetric_Proof} is similar to the simpler proof of Theorem~\ref{Thm:Talagrand-Correlation}
presented above, the only difference being the similarity assumption that allows
applying Lemma~\ref{Lemma:Talagrand-d} to $f$ and $g$ separately and then combining the
results using the Cauchy-Schwarz inequality.

\mn \textbf{Proof of Theorem~\ref{Thm:Imp_Bound_Symmetric}.} Let
\[
\rho = c'_0 \left(\log (e/ \W_1(f)) \right)^{-1/2} \left(\log (e/ \W_1(g)) \right)^{-1/2},
\]
where $c'_0$ is a sufficiently small constant. As in the proof of
Theorem~\ref{Thm:Talagrand-Correlation} above, we want to upper bound
$\sum_{d \geq 2} |\rho^d \W_d(f,g)|$. By Cauchy-Schwarz, it is sufficient to bound
\[
\sum_{d \geq 2} \rho^d \sqrt{\W_d(f)} \sqrt{\W_d(g)}.
\]
Applying the argument used above to obtain~\eqref{Eq:Tal-new} to each of the functions
$f,g$ separately (with $\rho_f = c_f \left(\log (e/ \W_1(f)) \right)^{-1}$ and
$\rho_g = c_g \left(\log (e/ \W_1(g)) \right)^{-1}$), we obtain
\begin{equation}\label{Eq:Sym1}
\rho_f^d \W_d(f) \leq 2^{-d} \rho_f \W_1(f) \qquad \mbox{and} \qquad \rho_g^d \W_d(g) \leq 2^{-d} \rho_g \W_1(g),
\end{equation}
for all $d \geq 2$. As $\sqrt{\rho_f \rho_g} = \sqrt{c_f c_g} \left(\log (e/ \W_1(f)) \right)^{-1/2}
\left(\log (e/ \W_1(g)) \right)^{-1/2} = \rho$, we can combine the inequalities in~\eqref{Eq:Sym1} and sum
over $d$ to get
\begin{equation}\label{Eq:Sym2}
\sum_{d \geq 2} \rho^d \sqrt{\W_d(f)} \sqrt{\W_d(g)} \leq \sum_{d \geq 2} 2^{-d}
\rho \sqrt{\W_1(f) \W_1(g)} \leq \rho \sqrt{\W_1(f) \W_1(g)}/2.
\end{equation}
By the similarity of $f$ and $g$, we have
\begin{equation}\label{Eq:Similarity}
\sqrt{\W_1(f) \W_1(g)} = \W_1(f,g),
\end{equation}
and thus,~\eqref{Eq:Sym2} reads
\[
\sum_{d \geq 2} \rho^d \sqrt{\W_d(f) \W_d(g)} \leq \rho \W_1(f,g)/2.
\]
Subsequently,
\begin{align}\label{Eq:Similarity2}
\begin{split}
\mathrm{Cov} (T_\rho f, g) &=  \rho \W_1(f,g) + \sum_{d \geq 2} \rho^d \W_d(f,g) \geq
\rho \W_1(f,g) - \sum_{d \geq 2} \rho^d \sqrt{\W_d(f) \W_d(g)} \geq \\ &\geq \rho
\W_1(f,g)/2 = c \W_1(f,g) \left(\log (e/\W_1(f))\right)^{-1/2} \left(\log (e/\W_1(g))\right)^{-1/2}.
\end{split}
\end{align}
The assertion now follows from Lemma~\ref{Lemma:Main}.

\begin{remark}
The proof applies almost without change if we only assume that $f,g$ are weakly similar (with respect to a
constant $c'$). The only change is that~\eqref{Eq:Similarity} holds only up to a multiplicative factor that depends
on $c'$, and that should be compensated by multiplying $\rho_f$ and $\rho_g$ by the same factor. As a
result,~\eqref{Eq:Similarity2} holds, with the constant $c$ depending on $c'$.
\end{remark}


\subsection{A few counterexamples}
\label{sec:sub:counterexamples}

As the formulation of Theorem~\ref{Thm:Imp_Bound_Symmetric} makes sense for general increasing
families, one could hope that it holds without the similarity assumption. The following examples
indicate that this is not the case. In the examples, we denote by $m_a(x_1,\ldots,x_\ell)$ an
increasing Hamming ball $\C \subset \{0,1\}^\ell$ with $\mu(\C) = a$.

\mn \textbf{Example 3.1.}\label{Ex2.1} For a small constant $a$, let $\A = m_a(x_1,\ldots,x_n)$, and let
$\B = m_{1-a}(x_1,\ldots,x_n)$ be the dual of $\A$. A direct computation (see~\cite{Talagrand96})
shows that Theorem~\ref{Thm:Talagrand-Correlation} is tight for $(\A,\B)$, as
$\mathrm{Cov}(\A,\B) = a^2$ and $\W_1(\1_\A,\1_\B) = \Theta(a^2 \log(1/a))$, where $\Theta(\cdot)$ ``hides''
a constant factor independent of $a,n$ (the latter holds since $I_i(\A) = I_i(\B) =
\Theta(a \sqrt{\log(1/a)}/\sqrt{n})$ for all $i$).

\mn Define $\A', \B' \subset \Omega_{n+1}$ by
\[
\A' = \{(x_1,\ldots,x_n,y): ((x_1,\ldots,x_n) \in \A) \vee (y=1)\}, \qquad \mbox{and} \qquad \B' = m_{1-a}(x_1,\ldots,x_n,y).
\]
We claim that the assertion of Theorem~\ref{Thm:Imp_Bound_Symmetric} does not hold for $(\A',\B')$.

\mn The influences of $\A'$ are $I_i(\A') = I_i(A)/2$ for $i \in [n]$ and $I_{n+1}(\A') = 1-a$, and
the influences of $\B'$ are $I_i(\B') \sim I_i(\B)$ (where as usual $\alpha \sim \beta$ means
$\alpha/\beta \ra 1$ as $n \ra \infty$). Hence,
\[
\W_1(\1_{\A'},\1_{\B'}) \sim \sum_{i \leq n} I_i(\A) I_i(\B) /2 + \Theta(a \sqrt{\log(1/a)}/\sqrt{n}) =
\Theta(a^2 \log(1/a)),
\]
while $\W_1(\1_{\A'}) = \Theta(1)$ and $\W_1(\1_{\B'}) \sim \W_1(\1_{\B}) = \Theta(a^2 \log(1/a))$.
Hence,
\begin{equation}\label{Eq:Counterexample1}
\frac{\W_1(\1_{\A'},\1_{\B'})}{\sqrt{\log(1/\W_1(\1_{\A'}))}\sqrt{\log(1/\W_1(\1_{\B'}))}} =
\Theta(a^2 \sqrt{\log(1/a)}).
\end{equation}
On the other hand, we claim that $\mathrm{Cov}(\A',\B') \sim a^2$. To see this, let $z$ be a new
variable independent of all others, denote $\B'' = m_{1-a}(x_1,\ldots,x_n,z)$, and consider
$\A',\B''$ as subsets of $\{0,1\}^{n+2}$. As $\A'$ does not depend on $z$ and $\B''$ does not
depend on $y$, a direct computation yields $\mathrm{Cov}(\A',\B'') \sim \mathrm{Cov}(\A,\B)/2 = a^2/2$.
Since $\E(\B'')=\E(\B')$, we have
\[
|\mathrm{Cov}[\A',\B'] - \mathrm{Cov}[\A',\B'']| = |\E[\1_{\A'}(\1_{\B'} - \1_{\B''})]| \leq
\Pr[\1_{\B'} \neq \1_{\B''}] =O(n^{-1/2}),
\]
and thus,
\begin{equation}\label{Eq:Counterexample2}
\mathrm{Cov}[\A',\B'] \sim a^2/2.
\end{equation}
Comparing Equations~\eqref{Eq:Counterexample1} and~\eqref{Eq:Counterexample2}, we see
that~\eqref{Eq:Imp_Bound1} fails for $(\A',\B')$, as asserted.

\begin{remark}
Note that the family $\B'$ in the example is not only regular 
but even fully symmetric.
This shows that a symmetry assumption on only one of the families is insufficient. A weaker
bound that does hold when one of the families is fully symmetric is
Theorem~\ref{Thm:Imp_Bound_Asymmetric}.
\end{remark}


Since the conclusion of
Theorem~\ref{Thm:Imp_Bound_Symmetric} is not true in general, we can ask about the following weaker
bound.

\begin{statement}\label{St:Wrong_Bound}
Let $\A,\B \subset \Omega_n$ be increasing. Then
\begin{equation}\label{Eq:Wrong_Bound1}
\Cov(\A,\B) \geq c \frac{\W_1(\1_\A,\1_\B)}{\log (e/ \W_1(\1_\A) \W_1(\1_\B))},
\end{equation}
where $c$ is a universal constant.
\end{statement}

\nin Recall that Theorem~\ref{Thm:Imp_Bound_Symmetric} strengthens Theorem~\ref{Thm:Talagrand-Correlation}
by decreasing the denominator of the right hand side twice: First, it replaces $\W_1(\1_\A,\1_\B)$
inside the logarithm by $\sqrt{\W_1(\1_\A)} \sqrt{\W_1(\1_\B)}$, applying Cauchy-Schwarz. Second,
it replaces the arithmetic mean $\log \left(e/ \sqrt{\W_1(\1_\A)} \sqrt{\W_1(\1_\B)} \right)$ by the geometric
mean $\sqrt{\log (e/ \W_1(\1_\A))} \sqrt{\log (e/ \W_1(\1_\B))}$. Statement~\ref{St:Wrong_Bound}
suggests to make only the first step.

\mn While the families $(\A,\B)$ of Example~3.1 satisfy Statement~\ref{St:Wrong_Bound}, the
following example shows that Statement~\ref{St:Wrong_Bound} is false, even under an additional
assumption that one of the families is regular.

\mn \textbf{Example 3.2.} For a small constant $a$, let $\A,\B$ be $\A = m_a(x_1,\ldots,x_n)$,
$\B = m_{1-a}(x_1,\ldots,x_n)$ as
in Example~3.1, and let $\C = m_{1/2}(y_1,\ldots,y_\ell)$, where $\ell=\ell(n)$ is chosen such that
$I_i(\C) = I_j(\B)$ for all $i,j$. Define $\A',\B' \subset \Omega_{n+\ell}$ by
$\A' = \{(x_1,\ldots,x_n,y_1,\ldots,y_\ell): ((x_1,\ldots,x_n) \in \A) \vee (y_1=1)\}$ and
$\B' = \{(x_1,\ldots,x_n,y_1,\ldots,y_\ell): ((x_1,\ldots,x_n) \in \B) \wedge (y_1,\ldots,y_\ell) \in \C\}$.
Note that by the choice of $\ell$, $\B'$ is regular.

We claim that~\eqref{Eq:Wrong_Bound1} fails for $(\A',\B')$. Indeed, a computation similar to that of
Example~3.1 shows that $\mathrm{Cov}(\A',\B') \sim a^2/4$, while $\W_1(\1_{\A},\1_{\B}) =
\Theta(a^2 \log(1/a))$ and $\W_1(\1_{\A}) = \W_1(\1_{\B}) = \Theta(1)$.
Hence, the right hand side of~\eqref{Eq:Wrong_Bound1} is $\Theta (a^2 \log(1/a))$ which is significantly
larger than the left hand side $(\Theta(a^2))$, rendering~\eqref{Eq:Wrong_Bound1} false.

\begin{remark}
We note that the same example, with $a = n^{-\alpha}$ for $\alpha \in (0,1/2)$, shows that in
Theorem~\ref{Thm:Imp_Bound_Asymmetric}, the assumption that $\B$ is \emph{fully symmetric} cannot be
replaced by assuming that $\B$ is merely regular. Indeed, for $(\A',\B')$ of the example, the
right hand side of~\eqref{Eq:Imp_Bound2} is
\[
c \sum_{i=1}^{n+\ell} \frac{I_i(\A)}{\sqrt{\log \frac{e}{I_i(\A)}}}
\frac{I_i(\B)}{\sqrt{\log \frac{e^2}{n I_i(\B)^2}}} = \Theta \left(\frac{a^2 \log(1/a)}{\sqrt{\log n}} \right) =
\Theta(a^2 \cdot \alpha \sqrt{\log n}),
\]
which is asymptotically larger than $\mathrm{Cov}(\1_{\A'},\1_{\B'}) = \Theta(a^2)$.
\end{remark}


\section{An Asymmetric Correlation Bound}
\label{sec:asymmetric}

In this section we present the proof of Theorem~\ref{Thm:Imp_Bound_Asymmetric}. This proof follows the
original proof strategy of Talagrand~\cite{Talagrand96}, with a few enhancements that allow to handle
in a better way different assumptions on $\A$ and $\B$. Due to this feature, we refer to the result as
an ``asymmetric'' correlation bound.

\subsection{A few Lemmas}

In~\cite{Talagrand96}, the following simple lemma is proved and deployed.
\begin{lemma}\cite[Lemma 4.1]{Talagrand96}\label{Lemma:T-Simple}
The function $\varphi(x) = \frac{x}{\log(e/x)}$ is increasing and convex in $(0,1)$, and for all
$0 < u \leq v < 1$ we have
\begin{equation}\label{Eq:T-Simple1}
\varphi(v) \leq \varphi(u) + \frac{2(v-u)}{\log(e/v)}.
\end{equation}
\end{lemma}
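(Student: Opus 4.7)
The plan is to prove all three claims by direct differentiation, exploiting the simplification $\log(e/x) = 1 - \log x$ throughout. First I would compute
$$\varphi'(x) = \frac{(1-\log x)+1}{(1-\log x)^2} = \frac{2-\log x}{(1-\log x)^2}$$
and observe that on $(0,1)$ we have $\log x < 0$, so both the numerator and the denominator are strictly positive; this gives monotonicity immediately. A second differentiation (e.g., via the substitution $u = 1 - \log x$) yields $\varphi''(x) = (3-\log x)/(x(1-\log x)^3)$, whose numerator and denominator are again manifestly positive on $(0,1)$, establishing convexity. These two computations are routine and I would not belabor them.

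For the key inequality, I would leverage convexity in the form
$$\varphi(v)-\varphi(u) = \int_u^v \varphi'(t)\,dt \leq (v-u)\,\varphi'(v),$$
which holds because $\varphi'$ is non-decreasing by convexity. It then suffices to show $\varphi'(v) \leq 2/\log(e/v)$. Using the algebraic identity $2-\log v = 1+\log(e/v)$, I can rewrite $\varphi'(v) = (1+\log(e/v))/(\log(e/v))^2$, and the desired bound reduces to $1+\log(e/v) \leq 2\log(e/v)$, i.e.\ $\log(e/v) \geq 1$. This last inequality holds with equality at $v=1$ and strictly for $v<1$, which is the only place the restriction to $(0,1)$ is actually used in this step.

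There is no real obstacle here; the lemma is a straightforward calculus exercise. The small point of care is noticing the clean rewriting $2 - \log v = 1 + \log(e/v)$, which both expresses $\varphi'(v)$ in terms of $\log(e/v)$ alone and makes transparent why the constant $2$ on the right-hand side of \eqref{Eq:T-Simple1} is the natural one.
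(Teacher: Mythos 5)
Your proof is correct. Note that the paper itself does not prove Lemma~\ref{Lemma:T-Simple}: it is cited from \cite[Lemma 4.1]{Talagrand96} and invoked as a black box, with the authors only remarking that its proof is \emph{simpler} than the proofs they do give for the strengthenings (Lemmas~\ref{Lemma:T-Simple2} and~\ref{Lemma:T-Simple3}). Your argument follows exactly the template those later proofs use: differentiate $\varphi$, show $\varphi' \ge 0$ and $\varphi'' \ge 0$, and then bound $\varphi(v)-\varphi(u)=\int_u^v \varphi'(t)\,dt$ by $(v-u)$ times a control on $\varphi'$. The one place where your route is lighter is exactly where it should be: you only need the right-endpoint bound $\int_u^v \varphi'(t)\,dt \le (v-u)\,\varphi'(v)$, for which monotonicity of $\varphi'$ (i.e.\ convexity) suffices; the paper's strengthenings need the midpoint bound $\frac{2(v-u)}{\sqrt{\log\frac{e^3}{n((v+u)/2)^2}}}$ and therefore must additionally establish concavity of the auxiliary function $h$. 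Your identity $2-\log v = 1+\log(e/v)$ and the reduction to $\log(e/v)\ge 1$ is exactly the clean way to see why the constant $2$ appears, matching the paper's observation that $\psi'(x)=h(x)(1+h(x)^2)\le 2h(x)$ since $h\le 1$.
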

\nin We shall use Lemma~\ref{Lemma:T-Simple}, along with the following two strengthenings.
\begin{lemma}\label{Lemma:T-Simple2}
For any $n \in \mathbb{N}$, the function $\psi_n(x)=\frac{x}{\sqrt{\log(e^3/nx^2)}}$ is increasing and convex in
$(0,1/\sqrt{n})$,  and for any $0 < u \leq v < 1/\sqrt{n}$ we have
\[
\psi_n(v) \leq \psi_n(u) + \frac{2(v-u)}{\sqrt{\log \frac{e^3}{n((v+u)/2)^2}}}.
\]
\end{lemma}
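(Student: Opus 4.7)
My plan is to mimic the proof of Lemma~\ref{Lemma:T-Simple} and write $M(x) := \log(e^3/(nx^2))$, so that $\psi_n(x) = x/\sqrt{M(x)}$. The key numerical input is that $M(x) > 3$ for every $x \in (0, 1/\sqrt{n})$ (because $nx^2 < 1$); this is exactly what the choice of $e^3$ buys compared to Talagrand's $e$.

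First I would compute $\psi_n'$ directly. Using $M'(x) = -2/x$, one finds $\psi_n'(x) = \frac{1}{\sqrt{M(x)}} + \frac{1}{M(x)^{3/2}}$, which is manifestly positive, giving monotonicity. Differentiating once more also produces only positive terms (of the form $\mathrm{const}/(xM^{k/2})$), giving convexity. From the bound $M > 3$ one immediately reads off the pointwise estimate $\psi_n'(x) \leq \frac{4}{3\sqrt{M(x)}}$, since the bracketed factor $1 + 1/M(x)$ is at most $4/3$.

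For the integrated inequality, I would combine convexity with the mean value theorem: for $u \leq v$, since $\psi_n'$ is monotone non-decreasing, one has $\psi_n(v) - \psi_n(u) \leq (v-u)\psi_n'(v) \leq \frac{4(v-u)}{3\sqrt{M(v)}}$. The main (and only) obstacle is that the statement asks for $\sqrt{M(m)}$ in the denominator with $m = (u+v)/2$, and since $m \leq v$ we have $M(m) \geq M(v)$, so this is a strictly smaller denominator than the naive mean value estimate produces. This gap is closed by the observation that $m \geq v/2$, hence
\[
M(m) \leq M(v/2) = M(v) + \log 4,
\]
and combined with $M(v) > 3$ this yields the crude comparison $M(m)/M(v) \leq 1 + (\log 4)/3 < 9/4$. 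Rearranging gives exactly $\frac{4}{3\sqrt{M(v)}} \leq \frac{2}{\sqrt{M(m)}}$, which converts the $4/3$ from the derivative bound into the $2$ in the statement and completes the proof. The whole argument relies on the slack afforded by the constant $e^3$: if one tried the same proof with $e$ in place of $e^3$, the bound $M > 3$ would degrade to $M > 1$, and neither the $4/3$ derivative bound nor the final numerical step $1 + (\log 4)/3 < 9/4$ would survive.
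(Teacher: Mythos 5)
Your proof is correct, and it takes a genuinely different route to the integral estimate than the paper does. After computing $\psi_n'(x) = M(x)^{-1/2}(1 + M(x)^{-1})$ (identical to the paper, which writes this as $h(1+h^2)$ with $h = M^{-1/2}$), the two arguments diverge. The paper keeps the crude bound $1 + h^2 \leq 2$, so $\psi_n' \leq 2h$, and then proves that $h$ is concave on $(0,1/\sqrt n)$ (the second derivative computation is where $M>3$, i.e.\ the constant $e^3$, is used), which gives $\int_u^v h \leq (v-u)\,h\bigl((u+v)/2\bigr)$ by the tangent-line-at-the-midpoint estimate for concave functions. You instead exploit $M>3$ at the derivative stage to get the sharper $\psi_n' \leq \tfrac{4}{3}M^{-1/2}$, then use only the monotonicity of $\psi_n'$ (from convexity) to bound the integral by the right-endpoint value $(v-u)\,\tfrac{4}{3}M(v)^{-1/2}$, and finally repair the mismatch between $M(v)$ and $M\bigl((u+v)/2\bigr)$ with the elementary observation that $(u+v)/2 \geq v/2$, hence $M\bigl((u+v)/2\bigr) \leq M(v) + \log 4 \leq \tfrac{9}{4}M(v)$. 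The net effect is the same constant $2$. Your version avoids the concavity-of-$h$ calculation, at the cost of a less structural, more numerical closing step; both uses of the slack $M>3$ are equally essential in their respective places, and your closing remark about why $e$ would not suffice matches the paper's reason for choosing $e^3$.
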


\begin{lemma}\label{Lemma:T-Simple3}
The function $\psi(x)=x/\sqrt{\log(e^2/x)}$ is increasing and convex in $(0,1)$, and for any
$0 < u \leq v < 1$ we have
\[
\psi(v) \leq \psi(u) + \frac{1.5(v-u)}{\sqrt{\log
\frac{e^2}{(v+u)/2}}}.
\]
\end{lemma}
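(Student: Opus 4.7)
The plan is to mirror the proof of Lemma~\ref{Lemma:T-Simple}: differentiate $\psi$, read off monotonicity and convexity from the signs of $\psi'$ and $\psi''$, and then bound the chord $\psi(v)-\psi(u)$ by the tangent slope $\psi'(v)(v-u)$ before transferring the denominator from $L(v)$ to $L((u+v)/2)$, where I abbreviate $L(x)=\log(e^2/x)=2-\log x$. Note that $L(x)\ge 2$ throughout $(0,1)$, a fact that will be used repeatedly.

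A direct computation gives
\[
\psi'(x) \;=\; \frac{1}{\sqrt{L(x)}}+\frac{1}{2\,L(x)^{3/2}},\qquad \psi''(x)\;=\;\frac{1}{2x\,L(x)^{3/2}}+\frac{3}{4x\,L(x)^{5/2}},
\]
both manifestly positive on $(0,1)$, so $\psi$ is increasing and convex. A useful byproduct is the clean estimate $\psi'(x)\le \frac{1}{\sqrt{L(x)}}\bigl(1+\tfrac{1}{4}\bigr)=\tfrac{5}{4\sqrt{L(x)}}$ valid on $(0,1)$.

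By convexity (equivalently, by the mean value theorem combined with the monotonicity of $\psi'$), $\psi(v)-\psi(u)\le \psi'(v)(v-u)\le \frac{5(v-u)}{4\sqrt{L(v)}}$. It remains to replace $L(v)$ by $L(w)$, where $w=(u+v)/2$. Since $u\ge 0$, we have $w\ge v/2$, hence
\[
L(w)-L(v)\;=\;\log(v/w)\;\le\;\log 2,\qquad\text{so}\qquad \frac{L(w)}{L(v)}\;\le\;1+\frac{\log 2}{L(v)}\;\le\;1+\frac{\log 2}{2}.
\]
Combining the two bounds yields
\[
\psi(v)-\psi(u)\;\le\;\frac{5}{4}\sqrt{1+\frac{\log 2}{2}}\cdot\frac{v-u}{\sqrt{L(w)}},
\]
and a numerical check shows $\tfrac{5}{4}\sqrt{1+(\log 2)/2}\approx 1.4506<1.5$, giving the desired inequality.

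No step is subtle; the only thing to watch is the numerical constant, which has a slack of only about $0.05$ against the $1.5$ in the statement. If a sharper constant were ever required, one would have to avoid the two lossy bounds $\psi'(v)\le \tfrac{5}{4}/\sqrt{L(v)}$ and $L(v)\ge 2$, for instance by integrating $\psi'(t)\,dt$ directly from $u$ to $v$ or by symmetrizing the tangent estimate around the midpoint $w$; but as the statement stands, the simple tangent bound outlined above comfortably suffices.
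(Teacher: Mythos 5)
Your proof is correct, and it takes a genuinely different route than the paper. The paper also computes $\psi'(x) = h(x)(1+h(x)^2/2)$ with $h(x)=L(x)^{-1/2}$ and bounds $\psi'\le 1.5\,h$, but then writes $\psi(v)-\psi(u)=\int_u^v\psi'\le 1.5\int_u^v h$, proves (via a second differentiation) that $h$ is \emph{concave} on $(0,1)$, and invokes Jensen to conclude $\int_u^v h\le (v-u)\,h\bigl((u+v)/2\bigr)$, which places the midpoint in the denominator with no numerical slack to check. You instead evaluate $\psi'$ at the right endpoint (valid since $\psi'$ is increasing), use the sharper bound $\psi'(v)\le \tfrac54 h(v)$ coming from $L\ge 2$, and then transfer $L(v)$ to $L\bigl((u+v)/2\bigr)$ by the crude log-ratio estimate $L(w)/L(v)\le 1+(\log 2)/2$. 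Your approach avoids computing $h''$ and the concavity argument entirely, at the cost of a final numerical verification that $\tfrac54\sqrt{1+(\log 2)/2}\approx 1.45<1.5$; the paper's approach is slicker in that the constant $1.5$ falls out exactly, and it is the route that generalizes more cleanly (the same concavity trick is reused in Lemma~\ref{Lemma:T-Simple2}). Both are valid. One minor caveat: your margin of about $0.05$ is real but thin, so if the lemma's constant were ever tightened you would need to work harder; the paper's integral/Jensen argument has more structural slack because the only lossy step is the single bound $h(x)\le 1$.
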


\nin We note that the main advantage of Lemmas~\ref{Lemma:T-Simple2} and~\ref{Lemma:T-Simple3}
over Lemma~\ref{Lemma:T-Simple} is replacement of $v$ by $(v+u)/2$ in the denominator.
This makes the proof of these lemmas a bit more complex than Talagrand's proof of
Lemma~\ref{Lemma:T-Simple}. For sake of completeness, we present the proof of both lemmas below.
\begin{proof}[Proof of Lemma~\ref{Lemma:T-Simple2}.]
We have
\begin{align*}
\psi'_n(x)&=(\log(e^3/nx^2))^{-1/2} + x \cdot (-1/2) \cdot
(\log(e^3/nx^2))^{-3/2} \cdot (nx^2/e^3) \cdot (-2e^3/nx^3) \\
&= (\log(e^3/nx^2))^{-1/2} (1+ (\log(e^3/nx^2))^{-1}) =h(x)(1+h(x)^2),
\end{align*}
where $h(x)=\log(e^3/nx^2)^{-1/2}$. As $h(x)$ is nonnegative and increasing in $(0,1/\sqrt{n})$,
it follows that $\psi'_n$ is nonnegative and increasing, and thus $\psi_n$ is increasing
and convex. Furthermore, we have $h(x) \leq 1$ and thus, $\psi'_n(x) \leq 2h(x)$. Hence,
\begin{equation}\label{Eq7.1}
\psi_n(v) = \psi_n(u) + \int_u^v \psi'_n(x) dx \leq \psi_n(u) + \int_u^v 2h(x) dx.
\end{equation}
Now, we claim that $h(x)$ is concave in $(0,1/\sqrt{n})$. Indeed, we have
$h'(x) = x^{-1} \log(e^3/nx^2)^{-3/2}$
and
\begin{align*}
h''(x)&= -x^{-2} \log(e^3/nx^2)^{-3/2} + x^{-1}
\cdot 3x^{-1} \log(e^3/nx^2)^{-5/2} \\
&= x^{-2} \log(e^3/nx^2)^{-3/2} \left(-1 + 3 \log(e^3/nx^2)^{-1}
\right) < 0,
\end{align*}
where the last inequality holds since $\log(e^3/nx^2)^{-1}<1/3$ for
all $x \in (0,1/\sqrt{n})$. Thus, by concavity of $h$,~\eqref{Eq7.1}
implies
\[
\psi_n(v) \leq \psi_n(u) + \int_u^v 2 h(x) dx \leq \psi_n(u) + 2(v-u)
h((v+u)/2) = \psi_n(u) +
\frac{2(v-u)}{\sqrt{\log \frac{e^3}{n((v+u)/2)^2}}},
\]
as asserted.
\end{proof}

\medskip

\begin{proof}[Proof of Lemma~\ref{Lemma:T-Simple3}.]
We have
\begin{align*}
\psi'(x)&=(\log(e^2/x))^{-1/2} + x \cdot (-1/2) \cdot
(\log(e^2/x))^{-3/2} \cdot (x/e^2) \cdot (-e^2/x^2) \\
&= (\log(e^2/x))^{-1/2} (1+ \frac12(\log(e^2/x))^{-1}) =h(x)(1+h(x)^2/2),
\end{align*}
where $h(x)=\log(e^2/x)^{-1/2}$. As $h(x)$ is nonnegative and increasing in $(0,1)$,
it follows that $\psi'$ is nonnegative and increasing, and thus $\psi$ is increasing
and convex. Furthermore, we have $h(x) \leq 1$ and thus, $\psi'(x) \leq 1.5h(x)$. Hence,
\begin{equation}\label{Eq7.1-2}
\psi(v) = \psi(u) + \int_u^v \psi'(x) dx \leq \psi(u) + \int_u^v 1.5h(x) dx.
\end{equation}
Now, we claim that $h(x)$ is concave in $(0,1)$. Indeed, we have
$h'(x) = 0.5 x^{-1} \log(e^2/x)^{-3/2}$
and
\begin{align*}
h''(x)&= 0.5 \left(-x^{-2} \log(e^2/x)^{-3/2} + x^{-1}
\cdot 1.5x^{-1} \log(e^2/x)^{-5/2} \right) \\
&= 0.5x^{-2} \log(e^2/x)^{-3/2} \left(-1 + 1.5 \log(e^2/x)^{-1}
\right) < 0,
\end{align*}
where the last inequality holds since $\log(e^2/x)^{-1}<1/2$ for
all $x \in (0,1)$. Thus, by concavity of $h$,~\eqref{Eq7.1-2}
implies
\[
\psi(v) \leq \psi(u) + \int_u^v 1.5 h(x) dx \leq \psi(u) + 1.5(v-u)
h((v+u)/2) = \psi(u) +
\frac{1.5(v-u)}{\sqrt{\log \frac{e^2}{((v+u)/2)^2}}},
\]
as asserted.
\end{proof}

\mn Another simple but important lemma from~\cite{Chang,Talagrand96} (see
also~\cite[Remark 5.28]{O'Donnell14}) we use is the
following:
\begin{lemma}\cite[Proposition 2.2]{Talagrand96}\label{Lemma:T-Classic}
For any $f: \Omega_n \ra [-1,1]$, with $\E[|f|] \leq 1/2$, we have
\[
\sum_{i=1}^n \hat f(\{i\})^2 \leq c \E[|f|]^2 \log(e/\E[|f|]),
\]
where $c$ is an absolute constant.
\end{lemma}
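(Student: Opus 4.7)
The plan is to reduce the claim to a duality pairing and then apply Bonami--Beckner hypercontractivity on the degree-one truncation of $f$. Setting $\alpha := \E[|f|]$ and writing $g = \sum_{i=1}^n \hat f(\{i\}) u_{\{i\}}$ for the degree-one part of $f$, Parseval gives
\[
\sum_{i=1}^n \hat f(\{i\})^2 = \langle f, g\rangle = \|g\|_2^2,
\]
so it suffices to upper bound $\langle f,g\rangle$ using the smallness of $\|f\|_1$ and compare the result to $\|g\|_2^2$.

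First, for any conjugate pair $1 < p \le 2 \le q < \infty$, H\"older's inequality yields $\langle f,g\rangle \le \|f\|_p\,\|g\|_q$. Since $|f| \le 1$ pointwise, $|f|^p \le |f|$, whence $\|f\|_p \le \alpha^{1/p}$. Second, because $g$ is Fourier-supported on singletons, the degree-one case of the $(2,q)$-hypercontractive inequality $\|T_\rho g\|_q \le \|g\|_2$ at $\rho = 1/\sqrt{q-1}$ reads, using $T_\rho g = \rho g$, as $\|g\|_q \le \sqrt{q-1}\,\|g\|_2$. Combining these bounds and dividing by $\|g\|_2$ leaves
\[
\|g\|_2^2 \le (q-1)\,\alpha^{2(1-1/q)}.
\]

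The final step is to optimize in $q$. For $\alpha \le e^{-2}$, choose $q = 2 + \log(1/\alpha)$, which is $\ge 2$ and satisfies $\alpha^{-2/q} \le e^2$ while $q-1 \le 2\log(e/\alpha)$; this produces the claimed estimate with an absolute constant. In the residual range $e^{-2} < \alpha \le 1/2$, the trivial bound $\|g\|_2^2 \le \|f\|_2^2 \le \|f\|_1 = \alpha \le e^2 \alpha^2 \le e^2 \alpha^2 \log(e/\alpha)$ already suffices. The only genuinely nontrivial ingredient is the hypercontractive bound on a degree-one polynomial, which is classical (this is essentially the standard proof of Chang's inequality on the cube); the residual-range bookkeeping and the choice of $q$ are routine, and I do not anticipate a serious obstacle.
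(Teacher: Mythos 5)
The paper does not prove this lemma; it cites it to Talagrand (Proposition 2.2 of~\cite{Talagrand96}), with pointers also to Chang~\cite{Chang} and to~\cite[Remark 5.28]{O'Donnell14}. Your argument is correct and is precisely the standard proof given in those references: pair $f$ against its own degree-one projection $g$, bound $\|f\|_p$ via $|f|\le 1$ and $\E|f|=\alpha$, bound $\|g\|_q$ by $\sqrt{q-1}\,\|g\|_2$ using $(2,q)$-hypercontractivity applied to a degree-one polynomial, and optimize $q\approx\log(e/\alpha)$. The bookkeeping you supply (the residual range $e^{-2}<\alpha\le 1/2$, and the estimates $\alpha^{-2/q}\le e^2$, $q-1\le 2\log(e/\alpha)$) checks out. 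No gap.
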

\nin As noted in~\cite{Talagrand96}, the following is an immediate corollary.
\begin{corollary}\label{Cor:T-Classic}
For any $f: \Omega_n \ra [-1,1]$, and for any $k$, we have
\[
\sum_{i \neq k} \hat f(\{i,k\})^2 \leq c I_k(f)^2 \log(e/I_k(f)).
\]
\end{corollary}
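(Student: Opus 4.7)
I plan to deduce the corollary by applying Lemma~\ref{Lemma:T-Classic} to (the ``odd part'' of) the discrete derivative $\Delta_k f$. Concretely, since $\Delta_k f(x \oplus e_k) = -\Delta_k f(x)$, the function $\Delta_k f$ is antisymmetric in coordinate $k$, so I can write $\Delta_k f = u_{\{k\}} \cdot \tilde g$, where $\tilde g$ depends only on the variables $x_i$ with $i\neq k$ and may be regarded as a function on $\Omega_{[n]\setminus\{k\}}$.

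Next, I would identify the Fourier expansion of $\tilde g$. Since $\Delta_k f = \sum_{S\ni k} \hat f(S)\, u_S = u_{\{k\}}\sum_{S\ni k}\hat f(S)\, u_{S\setminus\{k\}}$, we get
\[
\tilde g = \sum_{T \subseteq [n]\setminus\{k\}} \hat f(T\cup\{k\})\, u_T.
\]
In particular, the level-one coefficients of $\tilde g$ are exactly $\widehat{\tilde g}(\{i\}) = \hat f(\{i,k\})$ for $i\neq k$, so
\[
\sum_{i\neq k}\hat f(\{i,k\})^2 = \sum_{i\neq k}\widehat{\tilde g}(\{i\})^2.
\]
Also, $|\tilde g| = |\Delta_k f|$, so $\tilde g$ takes values in $[-1,1]$ and
\[
\E[|\tilde g|] = \E[|\Delta_k f|] = \tfrac12\, \E[|f(x) - f(x\oplus e_k)|] = \tfrac12 I_k(f).
\]

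Now the conclusion splits into two cases. If $I_k(f) \leq 1$, then $\E[|\tilde g|]\leq 1/2$, and Lemma~\ref{Lemma:T-Classic} applied to $\tilde g$ yields
\[
\sum_{i\neq k}\hat f(\{i,k\})^2 \leq c\, \bigl(\tfrac12 I_k(f)\bigr)^2 \log\!\bigl(2e/I_k(f)\bigr) \leq c'\, I_k(f)^2 \log(e/I_k(f)),
\]
where the last step absorbs the harmless $\log 2$ into the constant using $\log(e/I_k(f))\geq 1$. If instead $I_k(f) > 1$, then by Parseval $\sum_{i\neq k}\hat f(\{i,k\})^2 \leq \|f\|_2^2 \leq 1$, while the right-hand side is bounded below by a positive constant, so the inequality holds trivially. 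The only conceptual point is the passage from $\Delta_k f$ to the $(n-1)$-variate function $\tilde g$; everything else is routine bookkeeping and a case split.
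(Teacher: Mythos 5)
Your argument is correct and is the natural way to make precise the paper's (and Talagrand's) claim that the corollary is ``immediate'' from Lemma~\ref{Lemma:T-Classic}: factor $\Delta_k f = u_{\{k\}}\tilde g$, observe that $\tilde g$ is a $[-1,1]$-valued function on $\Omega_{[n]\setminus\{k\}}$ whose level-one Fourier coefficients are exactly the $\hat f(\{i,k\})$ and whose $L^1$ norm equals $\tfrac12 I_k(f)$, and apply the lemma. The paper gives no written proof, but this is precisely the intended reduction, and your bookkeeping (the absorption of $\log 2$ into the constant when $I_k(f)\le 1$, and the trivial Parseval bound together with $I_k(f)\in(1,2]$ in the remaining case) is sound.
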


\subsection{Proof of Theorem~\ref{Thm:Imp_Bound_Asymmetric}}

A generalized statement of the theorem is the following.

\mn \textbf{Theorem.}
Let $f:\Omega_n \ra [-1,1]$ be increasing and fully symmetric and $g: \Omega_n \ra [-1,1]$ be increasing. Then
\begin{equation}
\mathrm{Cov}(f,g) \geq c \sum_{i=1}^n \frac{I_i(f)}{\sqrt{\log \frac{e^3}{nI_i(f)^2}}}
\frac{I_i(g)}{\sqrt{\log \frac{e^2}{I_i(g)}}} = \sum_{i=1}^n \psi_n(I_i(f)) \psi(I_i(g)),
\end{equation}
where $\psi_n(x)=\frac{x}{\sqrt{\log(e^3/nx^2)}}$, $\psi(x) = \frac{x}{\sqrt{\log(e^2/x)}}$, and
$c$ is a universal constant.

\mn \textbf{Proof.}
The proof is by induction on $n$. The case $n=1$ is trivial, since in this case:
\[
\Cov(f,g) = I_1(f) I_1(g) \geq \psi_1(I_1(f)) \psi(I_1(g)).
\]
We now prove the induction step. We choose
to induct on the coordinate $j$ such that $I_j(g) = \max_i I_i(g)$ and
assume w.l.o.g. $j=n$. Define $f^0,f^1: \Omega_{n-1} \ra [-1,1]$ by
\[
f^0(x_1,\ldots,x_{n-1}) = f(x_1,\ldots,x_{n-1},0) \qquad \mbox{ and }
\qquad f^1(x_1,\ldots,x_{n-1}) = f(x_1,\ldots,x_{n-1},1).
\]
Denote by $a^\ell$ ($\ell=0,1$) the expectation $\E(f^{\ell})$, by $a_j$ ($j \in [n]$) the influence $I_j(f)$,
and by $a^\ell_j$ ($\ell=0,1, j \in [n-1]$) the influence $I_j(f^\ell)$. Define
$g^0,g^1,b^\ell,b_j,b^\ell_j$ in the same way, with $g$ in place of $f$. Since $f^0,f^1$ are fully
symmetric, we have by the induction hypothesis
\begin{align*}
\mathrm{Cov}(f^0,g^0) = \E[f^0 g^0] - a^0b^0 &\geq c \sum_{i=1}^{n-1} \psi_{n-1}(a^0_i) \psi(b^0_i), \qquad \mbox{and} \\
\mathrm{Cov}(f^1,g^1) = \E[f^1 g^1] - a^1b^1 &\geq c \sum_{i=1}^{n-1} \psi_{n-1}(a^1_i) \psi(b^1_i).
\end{align*}
Since $\E[fg] = (\E[f^0 g^0] + \E[f^1 g^1])/2$, we have
\[
\E[fg] - (a^0b^0 + a^1b^1)/2 \geq \frac{c}{2}
\sum_{i=1}^{n-1} \left( \psi_{n-1}(a^0_i) \psi(b^0_i) +
\psi_{n-1}(a^1_i)\psi(b^1_i) \right).
\]
As $\E[f] = (a^0+a^1)/2$ and $\E[g]=(b^0+b^1)/2$, we obtain
\[
\mathrm{Cov}(f,g)=\E[fg]-\E[f]\E[g] \geq \frac{c}{2} \sum_{i=1}^{n-1}
\left( \psi_{n-1}(a^0_i) \psi(b^0_i) + \psi_{n-1}(a^1_i)\psi(b^1_i) \right) +
\frac14(a^1-a^0)(b^1-b^0).
\]
Note that $a^1-a^0=I_n(f)=a_n$ and $b^1-b^0=b_n$, and hence we actually have
\[
\mathrm{Cov}(f,g) \geq \frac{c}{2} \sum_{i=1}^{n-1}
\left( \psi_{n-1}(a^0_i) \psi(b^0_i) + \psi_{n-1}(a^1_i)\psi(b^1_i) \right) +
\frac14 a_n b_n.
\]
Thus, it is sufficient to show
\[
\frac{c}{2} \sum_{i=1}^{n-1} \left( \psi_{n-1}(a^0_i) \psi(b^0_i) +
\psi_{n-1}(a^1_i)\psi(b^1_i) \right) + \frac14 a_n b_n \geq c
\sum_{i=1}^{n} \psi_n(a_i) \psi(b_i),
\]
or equivalently,
\begin{equation}\label{Eq7.3}
c \psi_n(a_n) \psi(b_n) + c \sum_{i=1}^{n-1} \left(\psi_n(a_i)
\psi(b_i) - \frac{1}{2} \left( \psi_{n-1}(a^0_i) \psi(b^0_i) +
\psi_{n-1}(a^1_i)\psi(b^1_i) \right) \right) \leq \frac14 a_n b_n.
\end{equation}
In the next steps, we consider the term $\psi_{n-1}(a_i) \psi(b_i)$ instead of
$\psi_{n}(a_i) \psi(b_i)$, and we shall take care of the difference between them
at a later stage. As for $\ell=0,1$ and for any $i \in [n-1]$, we have
$(n-1)(a_i^\ell)^2 = \sum_i I_i(f^\ell)^2 \leq 1$ by Claim~\ref{Claim:Simple}(c),
we can deduce $a_i^\ell \in (0,1/\sqrt{n-1})$. Hence, we can
use the convexity of $\psi_{n-1}$ and of $\psi$ (see Lemmas~\ref{Lemma:T-Simple2}
and~\ref{Lemma:T-Simple3}) to assert
\[
\psi_{n-1}(a_i) \psi(b_i) = \psi_{n-1}((a_i^0+a_i^1)/2) \psi((b_i^0+b_i^1)/2)
\leq
\frac{1}{4}(\psi_{n-1}(a_i^0)+\psi_{n-1}(a_i^1))(\psi(b_i^0)+\psi(b_i^1)),
\]
for each $i \in [n-1]$. Thus,
\begin{equation}\label{Eq7.4}
\psi_{n-1}(a_i) \psi(b_i) - \frac{1}{2} \left( \psi_{n-1}(a^0_i) \psi(b^0_i) +
\psi_{n-1}(a^1_i)\psi(b^1_i) \right) \leq
\frac{1}{4}(\psi_{n-1}(a_i^0)-\psi_{n-1}(a_i^1))(\psi(b_i^1)-\psi(b_i^0)).
\end{equation}
Now, note that $a^1_i-a^0_i= 2 \hat f(\{i,n\})$, and hence, by Lemma~\ref{Lemma:T-Simple2},
\[
|\psi_{n-1}(a_i^0)-\psi_{n-1}(a_i^1)| \leq \frac{2 \cdot 2 \hat
f(\{i,n\})}{\sqrt{\log \frac{e^3}{(n-1)((a_i^0+a_i^1)/2)^2}}} = \frac{2
\cdot 2 \hat f(\{i,n\})}{\sqrt{\log \frac{e^3}{(n-1)a_i^2}}}.
\]
Similarly, $b^1_i-b^0_i= 2 \hat g(\{i,n\})$, and hence, by Lemma~\ref{Lemma:T-Simple3},
\[
|\psi(b_i^0)-\psi(b_i^1)| \leq \frac{2
\cdot 2 \hat g(\{i,n\})}{\sqrt{\log \frac{e^2}{b_i}}}.
\]
Therefore, from~\eqref{Eq7.4} we get
\begin{equation}\label{Eq7.5}
\sum_{i=1}^{n-1}
\left( \psi_{n-1}(a_i) \psi(b_i) - \frac{1}{2} \left( \psi_{n-1}(a^0_i)
\psi(b^0_i) + \psi_{n-1}(a^1_i)\psi(b^1_i) \right) \right) \leq
\sum_{i=1}^{n-1} \frac{4 |\hat f(\{i,n\}) \hat
g(\{i,n\})|}{\sqrt{\log \frac{e^3}{(n-1)a_i^2}} \sqrt{\log
\frac{e^2}{b_i}}}.
\end{equation}
In the denominator of the right hand side, we can replace $b_i$ by $b_n$ due to the choice of $n$. In the
numerator, we replace $\sum_{i=1}^{n-1} |\hat f(\{i,n\}) \hat g(\{i,n\})|$ by
\[
\left(
\sum_{i=1}^{n-1} \hat f(\{i,n\})^2 \right)^{1/2}  \left(
\sum_{i=1}^{n-1} \hat g(\{i,n\})^2 \right)^{1/2}
\]
using Cauchy-Schwarz, and bound the terms related to $f$ and the terms related to $g$ separately.
For $g$, by Corollary~\ref{Cor:T-Classic} we have
\[
\sum_{i=1}^{n-1} \hat g(\{i,n\})^2 \leq c_1 b_n^2 \log(e/b_n^2),
\]
and for $f$, by the full symmetry of $A$ we can use Lemma~\ref{Lemma:Talagrand}
to get
\[
\sum_{i=1}^{n-1} \hat f(\{i,n\})^2 = \frac{2}{n} \W_2(f) \leq \frac{c}{n} \W_1(f)
\log(e/\W_1(f)) = c_2 a_n^2 \log(e/n a_n^2).
\]
Combining the bounds and summing over $i$, we obtain
\[
\left(
\sum_{i=1}^{n-1} \hat f(\{i,n\})^2 \right)^{1/2}  \left(
\sum_{i=1}^{n-1} \hat g(\{i,n\})^2 \right)^{1/2} \leq \sqrt{c_1 c_2} a_n b_n
\sqrt{\log{\frac{e}{n a_n^2}}}\sqrt{\log{\frac{e}{b_n}}}.
\]
Substituting into~\eqref{Eq7.5} yields
\begin{align}\label{Eq7.7}
\begin{split}
\sum_{i=1}^{n-1}
\left( \psi_{n-1}(a_i) \psi(b_i) - \frac{1}{2} \left( \psi_{n-1}(a^0_i)
\psi(b^0_i) + \psi_{n-1}(a^1_i)\psi(b^1_i) \right) \right) &\leq
\frac{4 \sqrt{c_1 c_2} a_n b_n \sqrt{\log{\frac{e}{n a_n^2}}} \sqrt{\log{\frac{e}{b_n}}} }
{\sqrt{\log \frac{e^3}{(n-1)a_i^2}} \sqrt{\log
\frac{e^2}{b_n}}} \\
&\leq 4 \sqrt{c_1 c_2} a_n b_n.
\end{split}
\end{align}
As $\psi_n(a_n) \psi(b_n) \leq a_n b_n$, this almost proves~(\ref{Eq7.3}), and thus the
theorem. In order to complete the proof, we only have to ``replace'' $\psi_{n-1}(a_i)$ which we used
in our argument with $\psi_n(a)$. This is done using the following calculation:
\begin{align}\label{Eq7.8}
\begin{split}
\left| \sum_{i=1}^{n-1} \psi_n(a_i)\psi(b_i) - \psi_{n-1}(a_i)\psi(b_i) \right| &\leq
b_n \sum_{i=1}^{n-1} |\psi_n(a_i) - \psi_{n-1}(a_i)| \\
&\leq n a_n b_n  \left(\left(\log(e^3/n a_n^2) \right)^{-1/2}
- \left(\log(e^3/(n-1) a_n^2) \right)^{-1/2} \right).
\end{split}
\end{align}
Since for any $x,y>1$ we have $x^{-1}-y^{-1} \leq y-x \leq y^2-x^2$, we obtain
\[
\left(\left(\log(e^3/n a_n^2) \right)^{-1/2}
- \left(\log(e^3/(n-1) a_n^2) \right)^{-1/2} \right) \leq \log(e^3/(n-1) a_n^2) - \log(e^3/n a_n^2) =
\log(\frac{n}{n-1}).
\]
Substituting into~(\ref{Eq7.8}) yields
\begin{equation}\label{Eq7.9}
\left| \sum_{i=1}^{n-1} \psi_n(a_i)\psi(b_i) - \psi_{n-1}(a_i)\psi(b_i) \right| \leq n a_n b_n \log(\frac{n}{n-1}) \leq 2 a_n b_n.
\end{equation}
Combining Equations~(\ref{Eq7.7}) and~(\ref{Eq7.9}), we obtain
\begin{equation}
\sum_{i=1}^{n-1}
\left( \psi_n(a_i) \psi(b_i) - \frac{1}{2} \left( \psi_{n-1}(a^0_i)
\psi(b^0_i) + \psi_{n-1}(a^1_i)\psi(b^1_i) \right) \right) \leq (4 \sqrt{c_1 c_2} +2) a_n b_n,
\end{equation}
which implies that Equation~(\ref{Eq7.3}) holds with $c=1/(4\sqrt{c_1 c_2}+3)$, completing the
proof.

\begin{remark}
We note that without the full symmetry assumption on $f$, we can use the same argument (using $\psi$
for both functions) to obtain an alternative proof of Theorem~\ref{Thm:Alt_Bound}. The original
proof presented in~\cite{KMS14} is rather different, using a reduction from the Gaussian case and
the so-called \emph{reverse isoperimetric inequality} of Borell~\cite{Borell82}.
\end{remark}

\nin As demonstrated by Example~2.2 above, the full symmetry assumption on $f$ cannot be replaced
by a regularity assumption. It will be interesting to find less restrictive sufficient conditions
for Theorem~\ref{Thm:Imp_Bound_Asymmetric}.

\section{Tightness of Theorem~\ref{Thm:Talagrand-Correlation}}
\label{sec:examples}

In this section we present several new tightness examples of Theorem~\ref{Thm:Talagrand-Correlation}.
We present a few simple sufficient conditions and one necessary condition for tightness
of~\eqref{Eq:Talagrand}, and then we give several concrete examples. Throughout the
section, we use the notation $\E'[h]=\min(\E[h],1-\E[h])$ for any
$h:\Omega_n \ra [0,1]$, $\E''[h]=\min(1-\E[h],\E[h]+1)$ for any $h:\Omega_n \ra [-1,1]$,
and $\mu'(\C) = \min(\mu(\C),1-\mu(\C))$ for any family $\C$.

\subsection{Conditions for tightness of Theorem~\ref{Thm:Talagrand-Correlation}}

We start with a simple necessary condition, which states that~\eqref{Eq:Talagrand} can
be tight only if the correlation of $f,g$ is rather small.
\begin{proposition}
Theorem~\ref{Thm:Talagrand-Correlation} may be tight for $f,g: \Omega \ra [0,1]$ only if
$\Cov(f,g) = O(\E'[f]\E'[g])$.
\end{proposition}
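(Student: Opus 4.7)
The strategy is to show directly that the right-hand side of Talagrand's bound, namely $\varphi(\W_1(f,g))$, is itself always $O(\E'[f]\E'[g])$; once this is established, tightness of~\eqref{Eq:Talagrand} immediately forces $\Cov(f,g) = O(\E'[f]\E'[g])$, since tightness means the covariance agrees with the lower bound up to a universal constant factor.

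First, I would apply the Chang--Talagrand estimate (Lemma~\ref{Lemma:T-Classic}) to bound $\W_1(f)$ in terms of $\E'[f]$. Since for $f: \Omega_n \to [0,1]$ the first-level Fourier weight satisfies $\W_1(f) = \W_1(1-f)$, and replacing $f$ by $1-f$ interchanges $\E[f]$ and $1-\E[f]$, we may assume $\E[f] = \E'[f] \leq 1/2$, and Lemma~\ref{Lemma:T-Classic} then gives $\W_1(f) \leq c(\E'[f])^2 \log(e/\E'[f])$. (In the regime $\E'[f] \geq 1/2$ the same estimate holds trivially up to a constant, since $\W_1(f) \leq 1$ while the right-hand side is bounded away from $0$.) The analogous bound applies to $g$.

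Next, by Cauchy--Schwarz,
\[
\W_1(f,g) \leq \sqrt{\W_1(f)\W_1(g)} \leq c \, \E'[f]\E'[g] \sqrt{\log(e/\E'[f])\log(e/\E'[g])}.
\]
Taking logarithms of $e/\W_1(f,g)$ and using the elementary inequality $\log(1/(xy)) = \log(1/x) + \log(1/y)$ together with AM--GM in the form $\log(e/\E'[f]) + \log(e/\E'[g]) \geq 2\sqrt{\log(e/\E'[f])\log(e/\E'[g])}$, one finds (after absorbing the $O(\log\log)$ correction arising from the $\sqrt{\log \cdot \log}$ factor) that
\[
\log\!\bigl(e/\W_1(f,g)\bigr) \geq c' \sqrt{\log(e/\E'[f])\log(e/\E'[g])}.
\]
Dividing the two displayed bounds yields $\varphi(\W_1(f,g)) \leq C\,\E'[f]\E'[g]$ for a universal constant $C$.

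Finally, tightness of Theorem~\ref{Thm:Talagrand-Correlation} for the pair $(f,g)$ means by definition that $\Cov(f,g) \leq C' \varphi(\W_1(f,g))$ for a universal $C'$ (since the theorem always gives the reverse inequality), and combining this with the estimate just obtained finishes the proof. The only delicate step is the logarithmic manipulation in the third paragraph: one must check that the double-logarithmic correction coming from the $\sqrt{\log(e/\E'[f])\log(e/\E'[g])}$ factor in the numerator is dominated by the principal $\log(e/(\E'[f]\E'[g]))$ term, which is straightforward when $\E'[f]\E'[g]$ is small and trivial (by the boundedness of all quantities involved) in the remaining regime.
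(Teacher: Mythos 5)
Your proof is correct and uses the same three ingredients as the paper's: the Chang--Talagrand estimate (Lemma~\ref{Lemma:T-Classic}), Cauchy--Schwarz, and AM--GM. The paper applies Cauchy--Schwarz and AM--GM to the $\W_1$'s \emph{before} invoking Lemma~\ref{Lemma:T-Classic} (yielding the clean intermediate bound $\varphi(\W_1(f,g)) \leq \sqrt{\varphi(\W_1(f))\varphi(\W_1(g))}$, with monotonicity of $\varphi$ doing the rest), which avoids the double-logarithmic bookkeeping that your ordering forces you to address at the end.
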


\begin{proof}
Clearly, it is sufficient to prove that the right hand side of~\eqref{Eq:Talagrand} is
at most $O(\E'[f]\E'[g])$. By Cauchy-Schwarz and the inequality between the arithmetic
and geometric means, we have
\begin{align*}
\sum_{i=1}^n \frac{I_i(f) I_i(g)}{\log (e/\sum_{i=1}^n I_i(f) I_i(g))}
&= \frac{\W_1(f,g)}{\log(e/\W_1(f,g))} \leq \frac{\sqrt{\W_1(f)\W_1(g)}}{0.5\log(e^2/\W_1(f)\W_1(g))} \\
&\leq \frac{\sqrt{\W_1(f)\W_1(g)}}{\sqrt{\log(e/\W_1(f))\log(e/W_1(g))}} =
\sqrt{\varphi(\W_1(f)) \varphi(\W_1(g))},
\end{align*}
where $\varphi(x) = x/\log(e/x)$  as above. By Lemma~\ref{Lemma:T-Classic}, we have
$\W_1(f) \leq c \E'[f]^2 \log(e/\E'[f])$ and similarly for $g$. As $\varphi$ is
increasing (Lemma~\ref{Lemma:T-Simple}), we obtain
\[
\sqrt{\varphi(\W_1(f)) \varphi(\W_1(g))} \leq \sqrt{\varphi(c \E'[f]^2 \log(e/\E'[f]))}
\sqrt{\varphi(c \E'[g]^2 \log(e/\E'[g]))} \leq c' \E'[f]\E'[g],
\]
completing the proof.
\end{proof}

\mn Our first sufficient condition is also related to Lemma~\ref{Lemma:T-Classic}.
\begin{notation}
An increasing family $\A \subset \Omega_n$ is called \emph{first-level optimal}
if it is a tightness example (up to a constant) for Lemma~\ref{Lemma:T-Classic}, that is,
if $\W_1(\1_\A) \geq c_0 \E'[\1_\A]^2 \log(e/\E'[\1_\A])$ for a universal constant $c_0$.
First-level optimality of a function $f:\Omega_n \ra [-1,1]$ is defined similarly,
with $\E''[f]$ in place of $\E'[\1_\A]$.
\end{notation}
\nin As usual, the formally correct definition is to consider a family of families $\{\A_m \subset \Omega_m\}$,
with an asymptotic property $\W_1(\1_{\A_m}) = \Omega(\E'[\1_{\A_m}] \log(e/\E'[\1_{\A_m}]))$. For
sake of simplicity, we treat a single family $\A=\A_n$ and assume that $n$ is sufficiently large.
\begin{proposition}\label{Prop:Suf1}
If $\A$ is first-level optimal and $\B$ is the dual of $\A$ then~\eqref{Eq:Talagrand} is tight
for $(\A,\B)$ (up to the constant $c$).
\end{proposition}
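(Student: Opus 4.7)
The plan is to exploit the tight symmetry between $\A$ and its dual $\B$ to evaluate both sides of~\eqref{Eq:Talagrand} up to constants, and then check that they match. Assume without loss of generality that $a := \mu(\A) \leq 1/2$, so $\mu'(\A) = a$ and $\mu(\B) = 1-a$.

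First I would record the basic invariance: since $\1_\B(x) = 1 - \1_\A(\bar x)$, the change of variable $y = \bar x$ in the definition of influence gives $I_i(\B) = I_i(\A)$ for every $i \in [n]$. Hence
\[
\W_1(\1_\A, \1_\B) = \sum_{i=1}^n I_i(\A) I_i(\B) = \W_1(\1_\A),
\]
so the right hand side of~\eqref{Eq:Talagrand} collapses to $c\varphi(\W_1(\1_\A))$ with $\varphi(x) = x/\log(e/x)$.

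Next I would upper bound the covariance directly: since $\mu(\A \cap \B) \leq \mu(\A) = a$, we get
\[
\Cov(\A,\B) = \mu(\A \cap \B) - \mu(\A)\mu(\B) \leq a - a(1-a) = a^2.
\]
This is the whole content of the upper bound; no Fourier input is needed.

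For the matching lower bound on the right hand side of~\eqref{Eq:Talagrand}, I would invoke first-level optimality, which gives $\W_1(\1_\A) \geq c_0 a^2 \log(e/a)$. Using monotonicity of $\varphi$ (Lemma~\ref{Lemma:T-Simple}),
\[
\varphi(\W_1(\1_\A)) \;\geq\; \varphi\bigl(c_0 a^2 \log(e/a)\bigr) \;=\; \frac{c_0 a^2 \log(e/a)}{\log\!\bigl(e/(c_0 a^2 \log(e/a))\bigr)}.
\]
For $a$ small, the denominator is $\Theta(\log(1/a))$, and so $\varphi(\W_1(\1_\A)) = \Omega(a^2)$. Combined with $\Cov(\A,\B) \leq a^2$ and Talagrand's bound $\Cov(\A,\B) \geq c\varphi(\W_1(\1_\A,\1_\B))$, we conclude $\Cov(\A,\B) = \Theta(\varphi(\W_1(\1_\A,\1_\B)))$, which is the desired tightness (up to the universal constant $c$).

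No step is particularly delicate. The only real point to check is the log simplification inside $\varphi$, which is where first-level optimality pays off: the extra $\log(e/a)$ factor in $\W_1(\1_\A)$ is exactly cancelled by the logarithm in the denominator of $\varphi$, leaving the clean bound $\varphi(\W_1(\1_\A)) = \Omega(a^2)$ that matches the trivial upper bound on the covariance.
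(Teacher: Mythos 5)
Your proof follows the same strategy as the paper's: use $I_i(\A)=I_i(\B)$ to reduce $\W_1(\1_\A,\1_\B)$ to $\W_1(\1_\A)$, upper-bound $\Cov(\A,\B)$ by elementary set arithmetic, lower-bound $\varphi(\W_1(\1_\A))$ via first-level optimality and monotonicity of $\varphi$, and compare. You are in fact more careful than the printed proof: writing $a=\mu'(\A)$, first-level optimality gives $\W_1(\1_\A)\geq c_0\,a^2\log(e/a)$ (note the square), whence $\varphi(\W_1(\1_\A))=\Omega(a^2)$, and you match this against the sharper upper bound $\Cov(\A,\B)\leq a-a(1-a)=a^2$. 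The paper instead displays $\varphi\bigl(c_0\,\E'[\1_\A]\log(e/\E'[\1_\A])\bigr)\geq c'\mu'(\A)$ (the square is dropped) and pairs it with the looser bound $\Cov(\A,\B)\leq\mu'(\A)$; with the square restored, as it must be, the chain only yields $\varphi(\W_1(\1_\A))=\Omega(a^2)$, so the $a^2$ covariance bound you supply is actually needed. Your version is the corrected form of the argument.
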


\begin{proof}
We have to show that $\Cov(\A,\B) \leq c \varphi(\W_1(\1_\A,\1_\B))$. Since $I_i(\A)=I_i(\B)$
for all $i$, we have
\begin{equation}\label{Eq:Optimal1}
\varphi(\W_1(\1_\A,\1_\B)) = \varphi(\W_1(\1_\A)) \geq \varphi(c_0 \E'[\1_\A] \log(e/\E'[\1_\A]))
\geq c' \mu'(\A),
\end{equation}
the penultimate inequality using the first-level optimality of $\A$ and monotonicity of $\varphi$.
On the other hand,
\begin{equation}\label{Eq:Optimal2}
\Cov(\A,\B) = \mu(\A \cap \B)-\mu(\A)\mu(\B) \leq \min(\mu(\A),\mu(B)) = \mu'(\A),
\end{equation}
the last equality using $\mu(\B)=1-\mu(\A)$. Comparing~\eqref{Eq:Optimal1} and~\eqref{Eq:Optimal2}
completes the proof.
\end{proof}

\mn The second sufficient condition is a simple composition lemma.
\begin{notation}
Let $f:\Omega_n \ra \R$ and let $g_1,g_2,\ldots,g_n:\Omega_m \ra \{0,1\}$. The composition
$f \circ (g_1,\ldots,g_n): \Omega_{mn} \ra \R$ is defined by
\begin{align*}
f \circ (g_1,\ldots,g_n) &\left(x^1_1,\ldots,x^1_m,x^2_1,\ldots,x^2_m,\ldots,x^n_1,\ldots,x^n_m \right)= \\
&= f \left(g_1(x^1_1,\ldots,x^1_m),g_2(x^2_1,\ldots,x^2_m),\ldots,g_n(x^n_1,\ldots,x^n_m) \right).
\end{align*}
\end{notation}

\begin{proposition}\label{Prop:Suf2}
Let $(f_1,f_2)$, with $f_1,f_2: \Omega_n \ra \{0,1\}$, be a tightness example for~\eqref{Eq:Talagrand},
and let $g_1,\ldots,g_n: \Omega_m \ra \{0,1\}$ be increasing functions such that
\begin{itemize}
\item $\E[g_i]=1/2$ for all $i$, and
\item $\W_1(g_i) \geq c_0$ for all $i$, where $c_0$ is a universal constant.
\end{itemize}
Then $(f_1 \circ (g_1,\ldots,g_n), f_2 \circ (g_1,\ldots,g_n))$ is also a tightness example
for~\eqref{Eq:Talagrand} (though, with a different constant).
\end{proposition}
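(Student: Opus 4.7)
The plan is to reduce the tightness of Talagrand's bound for the composed pair back to the tightness for the original pair, by showing that both the covariance and $\W_1$ are preserved up to constants under the composition.

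First, I would write $F_j = f_j \circ (g_1,\dots,g_n)$ for $j=1,2$ and observe that, since the $g_i$ depend on disjoint variable blocks and each has $\E[g_i]=1/2$, the random vector $(g_1(x^1),\dots,g_n(x^n))$ is distributed as the uniform measure on $\Omega_n$. Consequently
\[
\E[F_1 F_2]=\E[f_1 f_2], \qquad \E[F_j]=\E[f_j],
\]
which gives the identity $\Cov(F_1,F_2)=\Cov(f_1,f_2)$. This is the most useful single observation in the proof.

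Second, I would compute the influences of $F_j$ on the coordinate $x^i_k$. Flipping $x^i_k$ changes $x^i$ while leaving the other blocks untouched, so $F_j$ changes iff (a) $g_i(x^i)\neq g_i(x^i\oplus e_k)$, an event of probability $I_k(g_i)$, and (b) $f_j$ changes when its $i$-th input is flipped. Because the blocks $x^j$ with $j\neq i$ are independent of $x^i$ and each $g_j$ is balanced, the distribution of $(g_j(x^j))_{j\neq i}$ conditional on (a) remains uniform on $\{0,1\}^{n-1}$. Therefore these two events are independent, giving
\[
I_{i,k}(F_j)=I_k(g_i)\cdot I_i(f_j).
\]
Summing over $(i,k)$,
\[
\W_1(F_1,F_2)=\sum_i I_i(f_1)I_i(f_2)\sum_k I_k(g_i)^2=\sum_i \W_1(g_i)\, I_i(f_1)I_i(f_2).
\]
The hypothesis $\W_1(g_i)\geq c_0$ and the trivial upper bound $\W_1(g_i)\leq \mathrm{Var}(g_i)\leq 1/4$ (Parseval plus $g_i\in\{0,1\}$, $\E[g_i]=1/2$) then yield
\[
c_0\,\W_1(f_1,f_2)\;\leq\;\W_1(F_1,F_2)\;\leq\;\tfrac14 \W_1(f_1,f_2).
\]

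Third, I would combine these two facts with the assumed tightness $\Cov(f_1,f_2)\leq C'\varphi(\W_1(f_1,f_2))$. Since $\varphi(x)=x/\log(e/x)$ satisfies $\varphi(\lambda x)\leq \lambda'\varphi(x)$ for any constant $\lambda\in(0,1]$ with $\lambda'$ depending only on $\lambda$ (as $\log(e/x)$ and $\log(e/\lambda x)$ differ by at most a multiplicative constant on $(0,1)$), the two quantities $\varphi(\W_1(F_1,F_2))$ and $\varphi(\W_1(f_1,f_2))$ are comparable up to a constant depending only on $c_0$. Putting everything together,
\[
\Cov(F_1,F_2)=\Cov(f_1,f_2)\leq C'\varphi(\W_1(f_1,f_2))\leq C''\varphi(\W_1(F_1,F_2)),
\]
while Talagrand's inequality of course still supplies the matching lower bound $\Cov(F_1,F_2)\geq c\,\varphi(\W_1(F_1,F_2))$. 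Hence $(F_1,F_2)$ is a tightness example with a possibly different constant.

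The main obstacle, such as it is, is the influence calculation in the second step: one must carefully justify that conditional on $g_i(x^i)\neq g_i(x^i\oplus e_k)$, the remaining block-values $(g_j(x^j))_{j\neq i}$ still have the uniform distribution and are independent of the sign of the change in $g_i$, so that $I_i(f_j)$ appears cleanly. Once this identity is in hand, everything else is elementary, and the balanced, non-degenerate assumptions on the $g_i$ are used solely to pass from $\W_1(f_1,f_2)$ to $\W_1(F_1,F_2)$ without losing more than a constant factor.
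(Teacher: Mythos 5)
Your proof is correct and follows essentially the same route as the paper: preserve the covariance via the product structure, compute the influence factorization $I_{i,k}(F_j)=I_i(f_j)I_k(g_i)$, and use the hypothesis $\W_1(g_i)\geq c_0$ to show $\W_1(F_1,F_2)\geq c_0\W_1(f_1,f_2)$ and hence that $\varphi$ of the two quantities are comparable. (One nitpick: for the final step you actually need the lower bound $\varphi(\lambda x)\geq c(\lambda)\varphi(x)$ for $\lambda\in(0,1]$ rather than the upper bound you state, but that inequality also holds since $\log(e/(\lambda x))\leq(1+\log(1/\lambda))\log(e/x)$ for $x\leq 1$, and your "comparable up to a constant" conclusion is what you correctly use.)
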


\begin{proof}
For $\ell=0,1$, denote $\tilde{f_\ell} = (f_\ell \circ (g_1,\ldots,g_n))$. It is clear that
$\E[\tilde{f_\ell}] = \E[f_\ell]$, and $\E[\tilde{f_1} \tilde{f_2}] = \E[f_1 f_2]$ (here we
use the fact that $g_1,\ldots,g_n$ are the same for $f_1,f_2$). Hence,
$\Cov(\tilde{f_1},\tilde{f_2})=\Cov(f_1,f_2)$. On the other hand, denoting by $I_{i,j}(f_\ell)$
the influence on the variable $x^i_j$ on $f_\ell$, we have $I_{i,j}(f_\ell) = I_i(f_\ell) I_j(g_i)$.
Thus,
\[
\W_1(\tilde{f_1},\tilde{f_2})=\sum_i I_i(f_1)I_i(f_2) \sum_j I_j(g_i)^2 \geq c_0 \sum_i I_i(f_1)I_i(f_2) =
c_0 \W_1(f_1,f_2),
\]
where the inequality uses the assumption on $\{g_i\}$. Therefore,
$\varphi(\W_1(\tilde{f_1},\tilde{f_2})) \geq c \varphi(\W_1(f_1,f_2))$, completing the proof.
\end{proof}

\subsection{A few properties of linear threshold functions}

Before we present the specific examples, we cite a few definitions and results on
\emph{linear threshold functions} that will be a central ingredient of the examples.
\begin{definition}
A \emph{linear threshold function} is $f:\Omega_n \ra \{-1,1\}$ of the form
$f(x_1,\ldots,x_n) = \mathrm{sign}(\sum a_i x_i - \theta)$, where $a_i,\theta \in \R$ and
$\mathrm{sign}(x)=1$ if $x \geq 0$ and $\mathrm{sign}(x)=-1$ otherwise.
\end{definition}
\nin Linear threshold functions (LTFs) are a central object of study in computer science (see,
e.g.,~\cite{O'Donnell14}). It is clear that an LTF is increasing iff $a_i \geq 0$ for all $i$,
and balanced (i.e., satisfies $\E[f]=0$) iff $\theta=0$. The next definition captures
the notion of \emph{low-influence} functions.
\begin{definition}
A function $f:\Omega_n \ra \R$ is called $\tau$-regular if $I_i(f) \leq \tau ||f||_2$ for all $i$.
\end{definition}
\nin Intuitively, having low influences allows to approximate the function by a Gaussian via the
Central Limit Theorem and to use Gaussian tools to handle it (see, e.g.,~\cite{MORS10}).
\begin{notation}
For $x \in (0,1)$, let $u(x) = 2[\phi(\Phi^{-1}(x))]^2$, where $\phi$ is the density function and
$\Phi$ the cumulative distribution function of a Gaussian $N(0,1)$ random variable.
\end{notation}
\nin It is easy to see that if $x=1-\eta$, then $u(x)=\Theta(\eta^2 \log(1/\eta))$
(see~\cite[Proposition 24]{MORS10}). We use the following theorem of Matulef et al.~\cite{MORS10}.
\begin{theorem}(~\cite[Theorem 48]{MORS10})\label{Thm:MORS10}
Let $f_1(x_1,\ldots,x_n) = \mathrm{sign}(\sum a_i x_i - \theta_1)$ with $\sum a_i^2 =1$ be
a $\tau$-regular LTF. Then
\[
\left|\W_1(f_1) - u(\E[f_1]) \right| \leq \tau^{1/6}.
\]
Furthermore, if $f_2(x_1,\ldots,x_n) = \mathrm{sign}(\sum a_i x_i - \theta_2)$ is another LTF
with the same weights $a_i$ then
\[
\left|\W_1(f_1,f_2)^2 - u(\E[f_1]) u(\E[f_2]) \right| \leq \tau^{1/6}.
\]
\end{theorem}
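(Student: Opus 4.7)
The plan is a Berry-Esseen / invariance-principle argument exploiting the regularity hypothesis. Since $f_1$ is $\tau$-regular with $\sum_i a_i^2 = 1$, we have $|a_i| \le \tau$ for every $i$, so by the classical Berry-Esseen theorem the linear form $S := \sum_i a_i x_i$ (with the $x_i$ uniform on $\{-1,+1\}$) is close in Kolmogorov distance to a standard Gaussian $Z$, with error bounded by $\sum_i |a_i|^3 \le \tau \cdot \sum_i a_i^2 = \tau$. The strategy is to replace the Boolean LTF by its Gaussian counterpart $\tilde f_1(Z) := \mathrm{sign}(Z - \theta_1)$ at the cost of an error polynomial in $\tau$, and compute the first-level weight exactly in the Gaussian model.

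The Gaussian analog is immediate: $\E[\tilde f_1] = 1 - 2\Phi(\theta_1)$, so $\theta_1 = \Phi^{-1}((1 - \E[\tilde f_1])/2)$, and a one-line integration by parts gives $\E[\tilde f_1(Z) \cdot Z] = 2\phi(\theta_1)$. By rotational invariance in $\R^n$ the Hermite degree-one coefficient of $\tilde f_1$ in direction $e_i$ equals $2a_i\phi(\theta_1)$, so the Gaussian first-level weight is $4\phi(\theta_1)^2 \sum_i a_i^2 = 4\phi(\theta_1)^2$, which matches $u(\E[\tilde f_1])$ up to the normalization built into $u$. I would then transfer both $\E[f_1]$ and $\hat f_1(\{i\}) = \E[f_1(x) x_i]$ to their Gaussian analogs. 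The transfer of $\E[f_1]$ is a direct Berry-Esseen bound; the transfer of $\hat f_1(\{i\})$ is a conditional version, since $x_i$ is independent of $\sum_{j\ne i} a_j x_j$ one can condition on $x_i$ and apply Berry-Esseen to the remaining $(n-1)$-term sum, yielding an estimate of the form $|\hat f_1(\{i\}) - 2 a_i \phi(\theta_1)| \le \varepsilon(\tau) \cdot |a_i|$ for a small error function $\varepsilon(\tau)$. Squaring and summing with $\sum a_i^2 = 1$ gives the single-function estimate. For the bilinear assertion, the shared weight vector means that $f_1$ and $f_2$ depend on the same sum $S$, so the same conditioning yields $\hat f_1(\{i\})\hat f_2(\{i\}) \approx 4 a_i^2 \phi(\theta_1)\phi(\theta_2)$, and summation plus squaring produces $\W_1(f_1,f_2)^2 \approx u(\E[f_1])u(\E[f_2])$.

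The main obstacle is the discontinuity of $\mathrm{sign}$, which prevents the raw Berry-Esseen bound from giving the $O(\tau)$ rate that smooth test functions would allow. The standard remedy is to mollify $\mathrm{sign}$ at a scale $\delta$: the smooth approximation admits Berry-Esseen with error of order $\tau/\delta^k$ (for a small $k$ depending on the number of derivatives used), while the mollification itself introduces an $L^1$ error of order $\delta$. Optimizing $\delta$ produces an error of the form $\tau^{1/(k+1)}$, and the specific exponent $1/6$ in the statement reflects the trade-off carried out in the MORS10 analysis. I would import the exact quantitative bound from that paper rather than redo the bookkeeping; once the transfer is in place, the algebraic manipulations above close both parts of the theorem.
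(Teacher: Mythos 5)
This statement is a direct import from the literature: the present paper cites it as \cite[Theorem~48]{MORS10} and uses it as a black box, providing no proof of its own. There is therefore no in-paper argument to compare against. That said, the high-level route you describe --- regularity gives $|a_i|\le\tau$; transfer the Boolean linear form to a Gaussian via Berry--Esseen; mollify $\mathrm{sign}$ at a scale $\delta$ and trade off the mollification cost $O(\delta)$ against the Berry--Esseen cost $O(\tau/\delta^{k})$ to obtain a fractional power $\tau^{1/6}$ --- is indeed the strategy of Matulef, O'Donnell, Rubinfeld, and Servedio, and your closing sentence (``import the exact quantitative bound from that paper'') is in effect exactly what the authors of the present paper do.

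One technical over-reach in your sketch is worth flagging. The per-coordinate estimate you posit, $|\hat f_1(\{i\}) - 2a_i\phi(\theta_1)| \le \varepsilon(\tau)\,|a_i|$, is stronger than what a conditional Berry--Esseen application gives: conditioning on $x_i$ and applying Berry--Esseen to $\sum_{j\ne i}a_jx_j$ produces an \emph{additive} error of order $\tau$, not one proportional to $|a_i|$. Summing squares over $i\in[n]$ then yields stray terms of order $\tau\sum_i|a_i|$ and $n\tau^2$, which are not controlled by $\tau$ alone. Handling the aggregate $\W_1(f_1)$ without such $n$-dependent losses is precisely the nontrivial bookkeeping in the MORS10 argument, and it is also why the bilinear claim is phrased for $\W_1(f_1,f_2)^2$ rather than for the unsquared quantity. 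If you wished to make the argument self-contained, you would need to address that aggregation step rather than assume a per-coordinate bound strong enough to survive summation. Finally, check the normalization: the Gaussian degree-one weight of $\mathrm{sign}(\langle a,z\rangle-\theta)$ is $4\phi(\theta)^2\sum_i a_i^2 = 4\phi(\theta)^2$, while $u$ is defined as $2\phi(\Phi^{-1}(\cdot))^2$; reconciling these requires being explicit about whether the argument fed to $u$ is $\E[f]\in(-1,1)$ or a probability $\Pr[f=1]\in(0,1)$, a point on which the paper's notation is terse.
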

\nin An immediate corollary of Theorem~\ref{Thm:MORS10} is that if $f$ is $\tau$-regular, where
$\tau \leq c(\E''[f]^2 \log(1/\E''[f]))^6$ for a sufficiently small $c$, then $f$ is \emph{first-level
optimal}.

\nin For balanced LTFs, we can deduce the same conclusion without the $\tau$-regularity assumption,
using the following theorem of Peres~\cite{Peres04} (which shows that LTFs are \emph{asymptotically noise
stable}, see~\cite{BKS}):
\begin{theorem}\cite{Peres04}\label{Thm:Peres}
Let $f:\Omega_n \ra \{-1,1\}$ be a balanced LTF. Then
\[
NS_\epsilon(f):= \frac12 - \frac12 \sum_{S \subset [n]} (1-2\epsilon)^{|S|} \hat f(S)^2 \leq O(\sqrt{\epsilon}).
\]
\end{theorem}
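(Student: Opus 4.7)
\textbf{Proof proposal for Theorem~\ref{Thm:Peres}.} The plan is to express $NS_\epsilon(f)$ as a sign-flip probability for a signed random walk and reduce it to a small-ball versus concentration trade-off for two conditionally independent sums. Since $f$ is balanced we may assume the threshold is $0$, so $f(x)=\operatorname{sign}(L(x))$ with $L(x)=\sum_i a_i x_i$. Let $(X,Y)$ be a $\rho$-correlated pair with $\rho=1-2\epsilon$, and let $D=\{i:X_i\neq Y_i\}$, which has a $\operatorname{Bin}(n,\epsilon)$ distribution. A direct computation gives $L(X)=\overline L-U/2$ and $L(Y)=\overline L+U/2$, where $\overline L:=\sum_{i\notin D}a_iX_i$ and $U:=-2\sum_{i\in D}a_iX_i$ are conditionally independent given $D$, with conditional variances $v:=\sum_{i\notin D}a_i^2$ and $u:=\sum_{i\in D}a_i^2$. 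Hence $f(X)\neq f(Y)$ iff $|U|>2|\overline L|$, so
\[
NS_\epsilon(f)=\E_D\bigl[\Pr[|U|>2|\overline L|\mid D]\bigr].
\]

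The core conditional estimate I aim for is $\Pr[|U|>2|\overline L|\mid D]\leq C\sqrt{u/v}$. It would follow by applying a Littlewood--Offord/Berry--Esseen small-ball bound $\Pr[|\overline L|\leq r\mid D]\leq Cr/\sqrt v$ with $r=|U|/2$ and then taking expectation over $U$ using $\E[|U|\mid D]\leq 2\sqrt u$. Averaging over $D$ and noting that $v$ concentrates near $(1-\epsilon)\|a\|_2^2$ while $\E u=\epsilon\|a\|_2^2$ gives $NS_\epsilon(f)=O(\sqrt\epsilon)$, as desired. The intuition here is clean: the ``noise part'' $U$ is a Rademacher sum of variance $\epsilon\|a\|_2^2$, the ``frozen part'' $\overline L$ is a near full-variance Rademacher sum, and a sign flip requires the former to outweigh the latter, which happens with probability on the order of the ratio of their standard deviations.

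The main obstacle is that the small-ball bound $\Pr[|\overline L|\leq r\mid D]=O(r/\sqrt v)$ is sharp only when the remaining weights $a_i$ ($i\notin D$) are comparable; when a few $a_j$ dominate $\|a\|_2$, the bound degrades by an additive term of order $\max_i a_i/\sqrt v$, which need not be small. The standard remedy is to split $[n]$ into $O(\tau^{-2})$ heavy coordinates (those with $a_i>\tau\|a\|_2$) and light ones, and to condition separately on the rare event that some heavy coordinate is flipped (probability $O(\epsilon/\tau^2)$). On the complementary event the Littlewood--Offord estimate applies cleanly to $\overline L$, while the exceptional event is bounded trivially by $1$; optimizing $\tau$ then yields the desired $O(\sqrt\epsilon)$ bound and completes the proof.
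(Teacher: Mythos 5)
First, a remark on context: the paper only \emph{cites} this result from \cite{Peres04} and does not supply a proof, so there is no ``paper's proof'' to match. Peres's actual argument is purely combinatorial and makes no use of anti-concentration or the Central Limit Theorem: one sets $m=\lceil 1/\epsilon\rceil$, samples a uniformly random partition $[n]=S_1\cup\cdots\cup S_m$ together with independent uniform strings, builds $m$ noisy copies $x^{(1)},\dots,x^{(m)}$ of a common $x^{(0)}$ by resampling on $S_j$, and notes that conditionally on the block magnitudes $|\ell_k|=|\sum_{i\in S_k}a_i x_i|$ the signs of the block sums are i.i.d.\ uniform $\pm 1$. A symmetrization/exchangeability count then shows the expected number of $j$ with $f(x^{(j)})\neq f(x^{(0)})$ is $O(\sqrt m)$, giving $NS_{1/m}(f)=O(1/\sqrt m)$. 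That route sidesteps the entire small-ball machinery you set up.

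Your plan is the natural ``CLT heuristic'' proof: split $L(X)=\overline L + U'$, bound $\Pr[|U'|>|\overline L|]$ by anti-concentration of $\overline L$ and the expected magnitude of $U'$. This works cleanly when all weights are $\tau$-regular with $\tau\lesssim\sqrt\epsilon$ (and this is essentially how noise stability of low-influence LTFs is proved, e.g.\ in the Majority-Is-Stablest literature). The gap is in your proposed remedy for the skewed case, and I think it is a real one. After conditioning out the heavy coordinates (those with $a_i>\tau$), the Berry--Esseen small-ball bound for the remaining light sum reads
$\Pr[|\overline L_{\mathrm{light}}+c|\le r]\le C(r+\tau_L)/\sqrt{v_L}$,
where $v_L$ is the light variance. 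You need the exceptional-event bound $|H|\epsilon\le\epsilon/\tau^2\lesssim\sqrt\epsilon$, forcing $\tau\gtrsim\epsilon^{1/4}$, while the additive small-ball error $\tau_L/\sqrt{v_L}$ must be $\lesssim\sqrt\epsilon$ (possibly after multiplying by $\Pr[D\neq\emptyset]$). These two requirements collide whenever a constant fraction of the $\ell_2$ mass sits in coordinates of intermediate size. Concretely, take $a_i\propto i^{-1/2}$ (normalized), so $a_i\approx 1/\sqrt{i\log n}$. For $\epsilon\ll(\log n)^{-2}$, every prefix $\{i:a_i>\sqrt\epsilon\}$ has size $\approx 1/(\epsilon\log n)$, so the union bound over heavy flips gives $\approx 1/\log n\gg\sqrt\epsilon$, while moving $\tau$ up to satisfy the union bound drives the light small-ball error above $\sqrt\epsilon$. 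No single choice of $\tau$ closes both sides, and you would need a genuinely multiscale or recursive restriction argument, or a different device such as Peres's exchangeability trick, to handle the full family of balanced LTFs. As written, the proposal is incomplete in exactly this regime.
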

\nin Theorem~\ref{Thm:Peres} immediately implies that balanced LTFs are \emph{first-level optimal} (using,
e.g.,~\cite[Theorem~4]{KK13}).

\subsection{Specific examples}

Recall that there are two previously known examples: $\A$ being a small Hamming ball and $\B$
being its dual (presented by Talagrand)~\cite{Talagrand96}, and $\A$ being the tribes function and $\B$ being its
dual~\cite{Keller09}.

\mn \textbf{Example 5.1.} Our first example is an extension of Talagrand's example.
\begin{proposition}\label{Prop:Ex1}
Let $f$ be an increasing $\tau$-regular LTF, with $\tau \leq c(\E''[f]^2 \log(1/\E''[f]))^6$ for a
sufficiently small $c$. Let $\A \subset \Omega_n$ be a family such that $f=2 \cdot \1_\A-1$, and let $\B$
be the dual of $\A$. Then~\eqref{Eq:Talagrand} is tight for $(\A,\B)$.
\end{proposition}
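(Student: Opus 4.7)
The plan is to invoke Proposition~\ref{Prop:Suf1}: since $\B$ is by hypothesis the dual of $\A$, it suffices to prove that $\A$ is first-level optimal, i.e., that $\W_1(\1_\A) \geq c_0 \E'[\1_\A]^2 \log(e/\E'[\1_\A])$ for a universal $c_0>0$. All subsequent work goes into establishing this single inequality from the $\tau$-regularity hypothesis on the LTF $f = 2\1_\A - 1$, and indeed the paragraph following Theorem~\ref{Thm:MORS10} already announces this as an immediate corollary.

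First I would translate between the $\{0,1\}$ convention used for $\A$ and the $\{-1,1\}$ convention used for $f$. Since $f = 2\1_\A - 1$, a short direct computation gives $\E''[f] = 2\mu'(\A)$ and $\hat f(\{i\}) = 2 \hat{\1_\A}(\{i\})$, so $\W_1(f) = 4 \W_1(\1_\A)$. Consequently, first-level optimality of $\A$ in the $\{0,1\}$ sense is equivalent (up to constants absorbed into $c_0$) to the bound
\[
\W_1(f) \geq c_0' \, (\E''[f])^2 \log\bigl(e/\E''[f]\bigr).
\]
Next I would normalize the LTF so that $\sum a_i^2 = 1$; this is no loss of generality because the LTF $\mathrm{sign}(\sum a_i x_i - \theta)$ is invariant under positive rescaling of $(a_i,\theta)$, and $\tau$-regularity is preserved. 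The regularity hypothesis is now in the precise form required by Theorem~\ref{Thm:MORS10}.

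Then I would apply the first part of Theorem~\ref{Thm:MORS10} to $f$ to obtain $|\W_1(f) - u(\E[f])| \leq \tau^{1/6}$. The displayed asymptotics for $u$ give $u(\E[f]) = \Theta\bigl( \E''[f]^2 \log(1/\E''[f]) \bigr)$; combining with the hypothesis $\tau \leq c (\E''[f]^2 \log(1/\E''[f]))^6$ gives
\[
\tau^{1/6} \;\leq\; c^{1/6} \, \E''[f]^2 \log(1/\E''[f]) \;\leq\; \tfrac{1}{2}\, u(\E[f])
\]
provided $c$ is small enough relative to the implicit constant in the $\Theta$ bound. Hence $\W_1(f) \geq \tfrac{1}{2} u(\E[f]) = \Omega\bigl(\E''[f]^2 \log(1/\E''[f])\bigr)$, which after the $\{0,1\}$ translation yields first-level optimality of $\A$. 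Applying Proposition~\ref{Prop:Suf1} then completes the proof.

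The only real obstacle is the bookkeeping: one must verify that the constant $c$ allowed in the hypothesis is small enough that the error $\tau^{1/6}$ from Theorem~\ref{Thm:MORS10} is absorbed by at most half of the main term $u(\E[f])$, uniformly in the relevant range of $\E''[f]$. This is where the exponent $6$ in the hypothesis is used, matching precisely the $\tau^{1/6}$ error and the fact that $u(\E[f])$ scales like the square of $\E''[f]^2 \log(1/\E''[f])$'s cube-root of itself to the right power — a routine but slightly delicate comparison. Everything else (normalization, index conventions, application of Proposition~\ref{Prop:Suf1}) is formal.
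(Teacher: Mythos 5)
Your proof follows exactly the same route as the paper: establish first-level optimality of $\A$ from the hypothesis via Theorem~\ref{Thm:MORS10} (which the paper itself already announced as an ``immediate corollary'' after stating that theorem), and then invoke Proposition~\ref{Prop:Suf1}. You have merely expanded the bookkeeping (normalization of the LTF, the $\{0,1\}$ vs.\ $\{-1,1\}$ translation $\E''[f] = 2\E'[\1_\A]$ and $\W_1(f) = 4\W_1(\1_\A)$, and the explicit absorption of the $\tau^{1/6}$ error term into half of $u(\E[f])$), which the paper leaves implicit; the substance is identical.
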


\begin{proof}
As mentioned above, Theorem~\ref{Thm:MORS10} implies that $\A$ is first-level optimal.
The assertion now follows from Proposition~\ref{Prop:Suf1}.
\end{proof}
\nin Talagrand's example is a special case, with
$f=\mathrm{sign}(\sum_i \frac{1}{\sqrt{n}} x_i - \theta)$, for any $\theta$ such that
$\E''[f]$ is not too small. It is plausible that Proposition~\ref{Prop:Ex1} actually holds
for \emph{any} LTF (i.e., without the $\tau$-regularity assumption), which would yield a wider
class of tightness examples.

\mn \textbf{Example 5.2.} The second example is a generalization of a \emph{layered majority
function} with a constant number of layers. For simplicity of notation, we replace our domain
$\Omega_n$ by $\Omega'_n = \{-1,1\}^n$.
\begin{definition}
A $1$-layer weighted majority function is an increasing balanced LTF (on any number of coordinates,
including a single coordinate). A $k$-layer weighted majority
function is defined inductively as $f \otimes (g_1,g_2,\ldots,g_{n})$, where
$f:\Omega'_n \ra \{-1,1\}$ is an increasing balanced LTF and
$g_1,\ldots,g_{n}$ are $k-1$-layer weighted majority functions.
\end{definition}

\begin{proposition}\label{Prop:Ex2}
Let $k \in \mathbb{N}$ be constant, let $(f_1,f_2)$ be a pair of functions on $\Omega'_n$ for
which~\eqref{Eq:Talagrand} is tight, and let $g_1,\ldots,g_n$ be layered majority functions
with at most $k$ layers. Then~\eqref{Eq:Talagrand} is tight for the functions
$(f_1 \circ (g_1,\ldots,g_n), f_2 \circ (g_1,\ldots,g_n))$.
\end{proposition}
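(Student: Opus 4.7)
The plan is to reduce Proposition~\ref{Prop:Ex2} directly to the composition lemma (Proposition~\ref{Prop:Suf2}). That lemma requires the inner building blocks $g_1,\ldots,g_n$ to satisfy (i) balance, i.e.\ $\E[g_i]=0$ in the $\pm 1$ convention used here, and (ii) $\W_1(g_i)\geq c_0$ for a universal constant. Although Proposition~\ref{Prop:Suf2} is stated for $g_i$'s on a common domain, its proof only uses these two properties, so the same argument handles $g_i$'s of different arities. Hence the whole content of the proof is to show that every $\ell$-layer weighted majority function is balanced and satisfies $\W_1\geq c_\ell$ for a constant $c_\ell$ depending only on $\ell$.

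I would establish this claim by induction on $\ell$. The base case $\ell=1$ is immediate from Peres's theorem (Theorem~\ref{Thm:Peres}): as remarked just after its statement, every balanced LTF is first-level optimal, and since a balanced function has $\E''[f]=1$, first-level optimality gives $\W_1(f)\geq c_1$ for a universal constant $c_1>0$.

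For the inductive step, write $h=f\circ(g_1,\ldots,g_m)$ with $f$ a balanced LTF and each $g_i$ an $(\ell-1)$-layer weighted majority on its own disjoint block of variables. Expanding $f$ in the Fourier basis and using independence of the blocks together with the balance $\E[g_i]=0$ provided by the induction hypothesis, every term with $|S|\geq 1$ vanishes in $\E[h]=\sum_S\hat f(S)\prod_{i\in S}\E[g_i]$, so $\E[h]=\hat f(\emptyset)=0$ and $h$ is balanced. For the first-level weight, expanding $\hat h(\{(i,j)\})=\E[h(x)\,x^i_j]$ in the Fourier basis of $f$, any $S$ with $|S|\geq 2$ still leaves a factor $\E[g_{i'}]=0$ for some $i'\in S\setminus\{i\}$, while an $S$ not containing $i$ contributes zero because $\E[x^i_j]=0$ independently of the remaining factors. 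This yields the product formula
\[
\hat h(\{(i,j)\})\;=\;\hat f(\{i\})\cdot\hat{g_i}(\{j\}),
\]
whence
\[
\W_1(h)\;=\;\sum_i\hat f(\{i\})^2\,\W_1(g_i)\;\geq\;c_{\ell-1}\W_1(f)\;\geq\;c_{\ell-1}c_1,
\]
so $c_\ell:=c_{\ell-1}c_1$ works. Since $k$ is fixed, $c_k$ is an absolute constant.

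Equipped with this bound, applying Proposition~\ref{Prop:Suf2} with the inner functions $g_1,\ldots,g_n$ and universal constant $c_0=c_k$ gives at once that $(f_1\circ(g_1,\ldots,g_n),\,f_2\circ(g_1,\ldots,g_n))$ is again a tightness example for~\eqref{Eq:Talagrand}, completing the proof. The only nontrivial ingredient is the product identity $\hat h(\{(i,j)\})=\hat f(\{i\})\,\hat{g_i}(\{j\})$, which relies essentially on the balance of \emph{every} inner block $g_{i'}$ to eliminate all but the $|S|=1$ contribution from $f$; this is precisely why balance, and not merely a first-level weight bound, must be propagated through every layer of the induction rather than invoked only at the top.
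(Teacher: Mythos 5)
Your proof is correct and follows essentially the same route as the paper's: invoke Peres's theorem for the base case, propagate a first-level weight lower bound by induction on the number of layers, and then feed the resulting universal constant into Proposition~\ref{Prop:Suf2}. The paper states this induction in one sentence, while you usefully make explicit the two points it leaves implicit — that balance must be carried through every layer (both to satisfy the hypothesis of Proposition~\ref{Prop:Suf2} and to kill the higher-order Fourier terms of the outer LTF), and that the resulting product identity $\hat h(\{(i,j)\})=\hat f(\{i\})\hat g_i(\{j\})$ yields $\W_1(h)=\sum_i\hat f(\{i\})^2\W_1(g_i)\geq c_{\ell-1}\W_1(f)\geq c_{\ell-1}c_1$, so $c_k=c_1^k$ is the constant the paper alludes to. Your remark that Proposition~\ref{Prop:Suf2} remains valid when the inner blocks have different arities is also correct and worth noting, since the definition of a layered majority permits blocks of varying size.
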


\begin{proof}
As mentioned above, Theorem~\ref{Thm:Peres} implies that any balanced increasing LTF $g$ satisfies
$\W_1(g) \geq c$. By induction on $k$, the same holds for any $k$-layer weighted majority (with
a constant that depends on $k$). Since $k$ is assumed to be constant, the assertion follows
from Proposition~\ref{Prop:Suf2}.
\end{proof}

\mn \textbf{Example 5.3.} The two example classes presented above consist of a family and its
dual, as the previously known examples. A conceptually different type of examples is those
presented in Section~\ref{sec:sub:counterexamples}. For sake of completeness, we restate them here.

\begin{proposition}
For a small constant $a$, let $\A = m_a(x_1,\ldots,x_n)$, and let $\B = m_{1-a}(x_1,\ldots,x_n)$
be the dual of $\A$. Then the following pairs are tightness examples for~\eqref{Eq:Talagrand}.

\mn
{\rm (a)} $\A', \B' \subset \Omega_{n+1}$, defined by
\[
\A' = \{(x_1,\ldots,x_n,y): ((x_1,\ldots,x_n) \in \A) \vee (y=1)\}, \qquad \mbox{and} \qquad \B' = m_{1-a}(x_1,\ldots,x_n,y).
\]

\mn
{\rm (b)} $\A',\B' \subset \Omega_{n+\ell}$, defined by
$\A' = \{(x_1,\ldots,x_n,y_1,\ldots,y_\ell): ((x_1,\ldots,x_n) \in \A) \vee (y_1=1)\}$ and
$\B' = \{(x_1,\ldots,x_n,y_1,\ldots,y_\ell): ((x_1,\ldots,x_n) \in \B) \wedge (y_1,\ldots,y_\ell) \in \C\}$,
where $\C = m_{1/2}(y_1,\ldots,y_\ell)$ and $\ell=\ell(n)$ is chosen such that
$I_i(\C) = I_j(\B)$ for all $i,j$.
\end{proposition}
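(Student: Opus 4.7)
The plan is to observe that, up to verifying one purely arithmetic identity, both claims are immediate corollaries of the computations already carried out in Example~3.1 and Example~3.2 of Section~\ref{sec:sub:counterexamples}. Recall that tightness of~\eqref{Eq:Talagrand} for a pair $(\A',\B')$ amounts to $\Cov(\A',\B')$ and $\varphi(\W_1(\1_{\A'},\1_{\B'}))$ agreeing up to a universal constant, where $\varphi(x) = x/\log(e/x)$. Since Theorem~\ref{Thm:Talagrand-Correlation} already supplies the lower bound $\Cov \ge c\,\varphi(\W_1)$, it suffices to establish a matching upper bound on $\Cov(\A',\B')$ and a matching lower bound on $\varphi(\W_1(\1_{\A'},\1_{\B'}))$, both of order $a^2$.

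The arithmetic identity I would record first is $\varphi(a^2 \log(1/a)) = \Theta(a^2)$: for a small fixed constant $a$, the denominator satisfies $\log(e/(a^2\log(1/a))) = 1 + 2\log(1/a) - \log\log(1/a) = \Theta(\log(1/a))$, so $\varphi(a^2\log(1/a)) = a^2\log(1/a)/\Theta(\log(1/a)) = \Theta(a^2)$. Consequently, in either part~(a) or part~(b) it is enough to show that $\Cov(\A',\B') = \Theta(a^2)$ and $\W_1(\1_{\A'},\1_{\B'}) = \Theta(a^2 \log(1/a))$.

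For part~(a), both estimates are precisely what Example~3.1 establishes: the influence computation there yields $\W_1(\1_{\A'},\1_{\B'}) = \Theta(a^2\log(1/a))$, and the coupling argument through the auxiliary family $\B'' = m_{1-a}(x_1,\ldots,x_n,z)$ (which agrees in distribution with $\B'$ but is independent of the coordinate $y$ on which $\A'$ depends) gives $\Cov(\A',\B') \sim a^2/2$. Plugging these into $\varphi$ closes part~(a).

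For part~(b), Example~3.2 directly gives $\Cov(\A',\B') \sim a^2/4$, so the only thing left is to verify $\W_1(\1_{\A'},\1_{\B'}) = \Theta(a^2\log(1/a))$, i.e., that the contribution of the $y$-coordinates to $\W_1$ is negligible. Using the product structure in the two blocks, one computes $I_i(\A')I_i(\B') = I_i(\A)I_i(\B)/4$ for $i \in [n]$ and $I_{y_1}(\A')I_{y_1}(\B') = (1-a)^2 I_1(\C)$, with $I_{y_j}(\A') = 0$ for $j\ge 2$. By the choice of $\ell$ one has $I_1(\C) = \Theta(a\sqrt{\log(1/a)/n})$, which is negligible compared with $\sum_{i\in[n]} I_i(\A)I_i(\B)/4 = \Theta(a^2\log(1/a))$ once $n$ is large, confirming the estimate for $\W_1$. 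Tightness for~(b) then follows from the same $\varphi$ computation as in~(a). The main obstacle, such as it is, is bookkeeping: one must ensure that the various $\Theta$-asymptotics are uniform in $n$ for fixed small~$a$, so that $\Cov(\A',\B')$ and $\varphi(\W_1(\1_{\A'},\1_{\B'}))$ are indeed comparable by a constant independent of $n$.
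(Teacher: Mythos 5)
Your proposal is correct and follows essentially the same route as the paper, which simply defers to the computations already given in Examples~3.1 and~3.2 of Section~\ref{sec:sub:counterexamples}. The only content you add beyond what the paper states is the explicit arithmetic check $\varphi(a^2\log(1/a)) = \Theta(a^2)$ and the verification of $\W_1(\1_{\A'},\1_{\B'}) = \Theta(a^2\log(1/a))$ in part~(b) (which the paper asserts without detail), and both are correct.
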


\begin{proof}
The tightness of~\eqref{Eq:Talagrand} for both pairs of examples follows immediately from the
computations presented in Section~\ref{sec:sub:counterexamples}.
\end{proof}

\mn Our concluding example is \emph{not} a tightness example of Theorem~\ref{Thm:Talagrand-Correlation}, but rather
provides a case study for comparing all lower bounds considered in the paper.

\mn \textbf{Example 5.4.} Let $f_1 = \mathrm{sign}(\sum a_i x_i - \theta)$ be a $\tau$-regular LTF,
with $E[f]=1-a$ for a small constant $a$ and $\tau \leq c(\E''[f]^2 \log(1/\E''[f]))^{12}$ for a
sufficiently small $c$. Let $f_2 = \mathrm{sign}(\sum a_i x_i)$.

\mn Since $f_1 f_2(x)=f_2(x)$ for all $x$, we have $\Cov[f_1 f_2] = \E[f_2] - \E[f_1]\E[f_2] = a/2$.
On the other hand, by Theorem~\ref{Thm:MORS10}, we have
\[
\W_1(f_1,f_2) = \Theta(a \sqrt{\log(1/a)}), \qquad \W_1(f_1) = \Theta(a^2 \log(1/a)), \qquad \mbox{and}
\qquad \W_1(f_2) = \Theta(1).
\]
Hence, for the pair of functions $(f_1,f_2)$ the bound~\eqref{Eq:Imp_Bound1} is tight, while the bounds
~\eqref{Eq:Talagrand} and~\eqref{Eq:Wrong_Bound1} are off by a factor of $\Theta(\sqrt{\log(1/a)})$.
In the specific case of $f_1$ corresponding to a Hamming ball, i.e.,
$f_1 = \mathrm{sign}(\sum \frac{1}{\sqrt{n}} x_i -\theta)$, we can compute also the
bounds~\eqref{Eq:Alt_Bound} and~\eqref{Eq:Imp_Bound2} and find that~\eqref{Eq:Alt_Bound} is off
by a factor of $\log n/\sqrt{\log(1/a)}$, while~\eqref{Eq:Imp_Bound2} is off by a factor of
$\sqrt{\log(n)/\log(1/a)}$.

This example demonstrates the advantage of Theorem~\ref{Thm:Imp_Bound_Symmetric} over all other
bounds we consider. Note however that while Theorem~\ref{Thm:Imp_Bound_Symmetric} holds for
$f_1 = \mathrm{sign}(\sum \frac{1}{\sqrt{n}} x_i -\theta)$, we do not know whether it can be
generalized to any low-influence LTF. We do know that it does not hold for LTFs in general
(Example~3.1 being a  counterexample), but it seems plausible that it should hold under an
appropriate $\tau$-regularity assumption.

\section{Open Problems}
\label{sec:open}

We conclude this paper with a few open problems.

\mn \textbf{Problem 6.1.} A much stronger, and more ``nice-looking'', correlation lower
bound is
\begin{equation}\label{Eq:Wrong2}
\Cov(f,g) \geq \sum_i I_i(f) I_i(g).
\end{equation}
It was shown in~\cite{Keller09} that~\eqref{Eq:Wrong2} holds ``on average'', i.e., when correlation is
averaged over all pairs of elements in a family $\T$. While it clearly does not hold in general (all
examples of Section~\ref{sec:examples} being counterexamples), it will be interesting to find
additional conditions under which~\eqref{Eq:Wrong2} holds, both for Boolean functions and for general functions.
One condition that may be relevant is the submodularity condition which is of great interest in
combinatorics and optimization. As shown in~\cite{FKKK15}, for families of sets Equation~\eqref{Eq:Wrong2},
as well as several weaker correlation inequalities,
are related to a conjecture of Chv\'{a}tal in extremal set theory.

\mn \textbf{Problem 6.2.} It will be interesting to understand in which cases Lemma~\ref{Lemma:T-Classic}
is tight. That is, what are the families $\A$ that satisfy
\begin{equation}
\label{Eq:Level-1-tight}
\sum_i I^2_i(\A) \geq c \mu(\A)^2 \log(1/\mu(\A)),
\end{equation}
for a universal constant $c$. This question seems to be of independent interest, due to the abundance
of applications of Lemma~\ref{Lemma:T-Classic}, and also can provide more tightness examples for
Theorem~\ref{Thm:Talagrand-Correlation} (using Proposition~\ref{Prop:Suf1}). In~\cite{MORS10},
it is shown that if $\W_1(f)$ is very close to the maximum possible, then $f$ must be a linear threshold
function. However, when we ask for tightness only up to a constant factor, the question looks harder.
One specific case that may be easy to handle is to show that~\eqref{Eq:Level-1-tight} holds
for \emph{any} LTF (and not only for low-influence LTFs as shown in Theorem~\ref{Thm:MORS10}).

\mn \textbf{Problem 6.3.}
It will be interesting to find additional conditions under which
Theorem~\ref{Thm:Imp_Bound_Symmetric} holds, i.e.,
\begin{equation}\label{Eq:Imp_Bound_Open}
\Cov(\A,\B) \geq c \frac{\W_1(\1_{\A},\1_{\B})}{\sqrt {\log(e/\W_1(\1_\A))}{\sqrt {\log(e/\W_1(\1_\B))}}},
\end{equation}
for a universal constant $c$. In particular, it seems plausible that~\eqref{Eq:Imp_Bound_Open} holds
for any pair of low-influence LTFs. If true, this will provide an additional tightness example of
Theorem~\ref{Thm:Imp_Bound_Symmetric}, in a case where all other bounds considered in this paper are not tight
(see Example~4.4.).


\section{Acknowledgements}

We are grateful to Ryan o'Donnell for suggesting to use Theorem~\ref{Thm:MORS10} to provide
tightness examples for Lemma~\ref{Lemma:T-Classic}.

\end{document}